\numberwithin{equation}{section}     
\def\sqr#1#2{{\vcenter{\hrule height.#2pt
        \hbox{\vrule width.#2pt height#1pt \kern#1pt
                \vrule width.#2pt}
        \hrule height.#2pt}}}
\def\split{\genfrac{}{}{0pt}1}
\newtheorem{Theorem}{\bf Theorem}[section]
\newtheorem{Lemma}[Theorem]{\bf Lemma}
\newtheorem{Corollary}[Theorem]{\bf Corollary}
\newtheorem{Proposition}[Theorem]{\bf Proposition}
\newtheorem{Remark/Definition}[Theorem]{\bf Remark/Definition}
\theoremstyle{definition}
\newtheorem{Remark}[Theorem]{Remark}
\newtheorem{Example}[Theorem]{Example}
\definecolor{dgreen}{rgb}{0.24, 0.17, 0.12}
\newcommand{\s}{\; | \;}
\newcommand{\ul} \underline
\newcommand{\mtx}{\left[ \begin{matrix}}
\newcommand{\mtxend}{\end{matrix}\right]}
\DeclareMathOperator{\reg}{{reg}}
\newcommand{\ffi}{\varphi}
\def\ZZ{\mathbb Z}
\def\PP{\mathbb P}
\newcommand{\cF}{\mathcal F}
\newcommand{\T}{{\bf T}}
\newcommand{\<}{\prec}
\DeclareMathOperator{\ini}{in}
\DeclareMathOperator{\hht}{ht}
\DeclareMathOperator{\lk}{lk}
\newcommand{\la}{\lambda}
\newcommand{\bT}{\mathbf T}
\newcommand{\bS}{{\mathbf S}}
\newcommand{\bTlm}{{\mathbf T}_{\la - \mu}}
\newcommand{\bSlm}{{\mathbf S}_{\la - \mu}}
\newcommand{\bTwlm}{{\mathbf T}_{\widetilde{\la} - \mu}}
\newcommand{\bSwlm}{{\mathbf S}_{\widetilde{\la} - \mu}}
\newcommand{\fm}{\mathfrak m}
\newcommand{\fa}{\mathfrak a}
\newcommand{\fb}{\mathfrak b}
\begin{document}

\title[Blow-up algebras, determinantal ideals, and Dedekind-Mertens-like formulas]{Blow-up algebras, determinantal ideals, and Dedekind-Mertens-like formulas}

\author[A.\ Corso]{Alberto Corso}
\address{Department of Mathematics, University of Kentucky, Lexington, 715 Patterson Office Tower,  KY 40506,
USA}
\email{alberto.corso@uky.edu}

\author[U.\ Nagel]{Uwe Nagel}
\address{Department of Mathematics, University of Kentucky, 715 Patterson Office Tower, Lexington,  KY 40506,
USA}
\email{uwe.nagel@uky.edu}

\author[S.\ Petrovi\'c]{Sonja Petrovi\'c}
\address{Department of  Applied Mathematics, Illinois Institute of Technology, Chicago, IL 60616, USA}
\email{sonja.petrovic@iit.edu}

\author[C.\ Yuen]{Cornelia Yuen}
\address{Department of Mathematics, The State University of New York at Potsdam, Potsdam, NY 13676, USA}
\email{yuenco@potsdam.edu}


\thanks{The work for this paper was done while the second author was 
sponsored by the National Security Agency under Grant
Numbers H98230-09-1-0032 and H98230-12-1-0247, and by the Simons Foundation under grants \#208869 and \#317096. 
The third author gratefully acknowledges  partial support by  AFOSR/DARPA grant \#FA9550-14-1-0141 during the final phase of this project and thanks the University of Kentucky Math Department for its hospitality. \\
{The authors are also grateful to the referee for helpful suggestions that improved the exposition.} } 
\keywords{Blow-up algebras, determinantal ideal, Gr\"obner basis, quadratic and Koszul algebra,
Cohen-Macaulay algebra, liaison, vertex-decomposability,  Hilbert function, Castelnuovo-Mumford regularity,
Ferrers and threshold graphs, skew shapes,  reductions}

\subjclass[2010]{Primary 13C40, 13F55, 13P10, 14M25, 16S37; Secondary 14M05, 14M12, 14N10, 05E40, 05E45}

\begin{abstract} 
We investigate Rees algebras and special fiber rings obtained by blowing up specialized Ferrers ideals. This class of monomial ideals includes strongly stable monomial ideals generated in degree two and edge ideals of prominent classes of graphs. We identify the equations of these blow-up algebras. They generate determinantal ideals associated to subregions of a generic symmetric matrix, which may have holes.  Exhibiting Gr\"obner bases for these ideals and using methods from Gorenstein liaison theory, we show that these determinantal rings are normal Cohen-Macaulay domains that are Koszul, that the initial ideals correspond to vertex decomposable simplicial complexes, and we determine their Hilbert functions and Castelnuovo-Mumford regularities. As a consequence, we find explicit minimal reductions for all Ferrers and many specialized Ferrers ideals, as well as their reduction numbers. These results can be viewed as extensions of the classical Dedekind-Mertens formula for the content of the product of two polynomials.
\end{abstract}

\maketitle



\section{Introduction}

Determinantal ideals have been a classic object of investigation in algebraic geometry and commutative algebra (see, e.g., \cite{Abhy, BV, MS, Conca, EHU, HT, KMi}). In this paper we introduce a new class of determinantal ideals. They are associated to certain subregions of a generic symmetric matrix. The novelty is that the region is allowed to have holes. We show that the minors generating the ideal form a Gr\"obner basis (with respect to a suitable term order) and deduce that their quotient rings are normal Cohen-Macaulay domains that are Koszul. Using methods from liaison theory, we establish that their initial ideals are squarefree and the Stanley-Reisner ideals of vertex decomposable simplicial complexes. We also use this approach to determine the Hilbert function and Castelnuovo-Mumford regularity of the quotient rings. 

The class of determinantal ideals introduced here arises naturally in the investigation of certain blow-up algebras. In fact, these ideals describe the  equations of special fiber rings and Rees algebras when one blows up certain monomial ideals, called specialized Ferrers ideals (see \cite{CorsoNagel2}). These monomial ideals are generated by quadrics and include all strongly stable monomial ideals that are generated in degree two as well as the edge ideals of threshold and Ferrers graphs - two ubiquitous classes of graphs. Using Gr\"obner bases, we also produce explicit minimal reductions of (many specialized) Ferrers ideals.  We then show how our knowledge of the Castelnuovo-Mumford regularity allows us to determine their reduction numbers. These results can be viewed as a generalization of the classical Dedekind-Mertens content formula. Finding distinguished classes of reductions is potentially of interest in  areas as diverse as birational geometry (see, e.g., \cite{PUV, Smith}) and algebraic statistics (see below). 

\medskip

The origins of some of our results can be traced back to the Dedekind-Mertens formula. The \emph{content} $c(f)$ of a polynomial  $f = a_1 + a_2t + \cdots + a_nt^{n-1}\in R[t]$ over a commutative ring $R$ is the $R$-ideal $(a_1, a_2, \ldots, a_n)$. Generalizing  Gauss's Lemma for a PID,  Dedekind and Mertens \cite{Northcott} gave the  general content formula for the product of two polynomials $f, g \in R[t]$: 
\begin{equation}\label{contentformula}
c(f g) \cdot c(g)^{n-1} = c(f) \cdot  c(g)^{n}.
\end{equation}
In \cite{CVV}, this equation is explained  in terms of the theory of Cohen-Macaulay rings for generic polynomials $f=x_1 + \cdots + x_nt^{n-1}$ and $g=y_m +
\cdots + y_1 t^{m-1}$. By multiplying both sides of (\ref{contentformula}) by $c(f)^{n-1}$, one obtains the  `decayed' content equation
\begin{equation}\label{contentformula2}
c(f g)  \cdot [c(f) \cdot  c(g)]^{n-1} = [c(f)  \cdot c(g)]^n.
\end{equation}
By \cite{CVV}, if $n \le m$,  the exponent $n-1=\deg f$ in (\ref{contentformula2}) is the least possible. That is, $c (fg)$ is a minimal reduction of $c(f) \cdot c(g)$ with  reduction number  $\min\{ n, m \}- 1$ (see Figure~\ref{fig:fullContentFormula}). Subsequently, a combinatorial proof of the Dedekind-Mertens formula was given by Bruns and Guerrieri \cite{BG} via a study of the  Gr\"{o}bner basis of the ideal $c(fg)$. 
\begin{figure}[h!]
\includegraphics{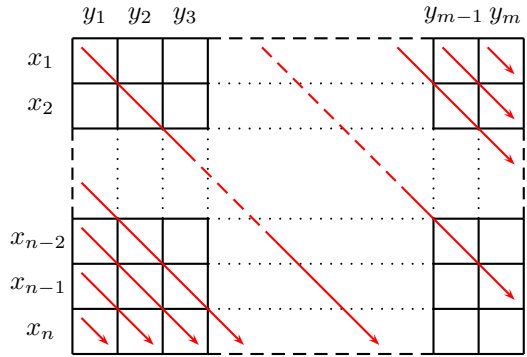}
\caption{The ideal $c(fg)$ in relation to the ideal $c(f)c(g)$.}
\label{fig:fullContentFormula}
\end{figure}
The boxes in Figure~\ref{fig:fullContentFormula} are naturally associated to the edges of a complete bipartite graph 
(with vertices $x_1,\dots,x_n$ and $y_1,\dots,y_m$). Its diagonals correspond to the generators of $c(fg)$. 
As a consequence of our results on blow-up rings,   we generalize the classical content reduction formula for a full rectangular tableau to Dedekind-Mertens-like formulas for Ferrers tableaux and skew shapes. We proceed in two steps.

\medskip 

In the first step, instead of a rectangle we consider more generally a Ferrers tableau and its corresponding Ferrers ideal. 
Any partition $\lambda=(\lambda_1, \ldots, \lambda_n)$ corresponds to a {\it Ferrers tableau} ${\mathbf T}_{\lambda}$, 
which is an array of $n$ rows of cells with $\lambda_i$  cells  in row $i$, left justified. The corresponding {\it Ferrers ideal} 
has a monomial generator corresponding to each cell in ${\mathbf T}_{\lambda}$, that is, 
\[
I_\lambda = (x_i y_j \; | \;  1 \le j \le \la_i, \ 1 \le i \le n) \subset K[x_1, \ldots, x_n, y_1, \ldots, y_m], 
\]
where $m = \la_1$. It is the edge ideal of a Ferrers graph (see Figure~\ref{fig:Ferrers tablau}). 


Ferrers graphs/tableaux have a prominent place in the literature as
they have been studied in relation to chromatic polynomials
\cite{Brenti,EvW}, Schubert varieties \cite{Ding,Develin},
hypergeometric series \cite{Haglund}, permutation statistics
\cite{Butler,EvW}, quantum mechanical operators \cite{Varvak}, and
inverse rook problems \cite{GJW,Ding,Develin}. More
generally, algebraic and combinatorial aspects of bipartite graphs
have been studied in depth (see, e.g., \cite{SVV, HH, Froberg, CorsoNagel, CorsoNagel2, NRei, DE} and the
comprehensive monographs  \cite{HH-book, vila}). In this paper we complete a study initiated in \cite{CorsoNagel} by exhibiting,  in particular, explicit minimal reductions of Ferrers ideals. 
More precisely, we show that the diagonals in any Ferrers tableau ${\mathbf T}_{\lambda}$ correspond to the generators 
of a minimal reduction $J_{\la}$ of the Ferrers ideal $I_{\la}$ (see Theorem \ref{thm:min-red-Ferrers} and Figure \ref{fig:Ferrers tablau}) 
and that $I_{\la}$  has reduction number (see Theorem \ref{thm:red number Ferrers})
\[
r_{J_{\lambda}} (I _{\lambda}) = \min \{n-1, \la_i + i - 3 \s 2 \le i \le n\}.
\]   

\begin{figure}[h!]
\includegraphics{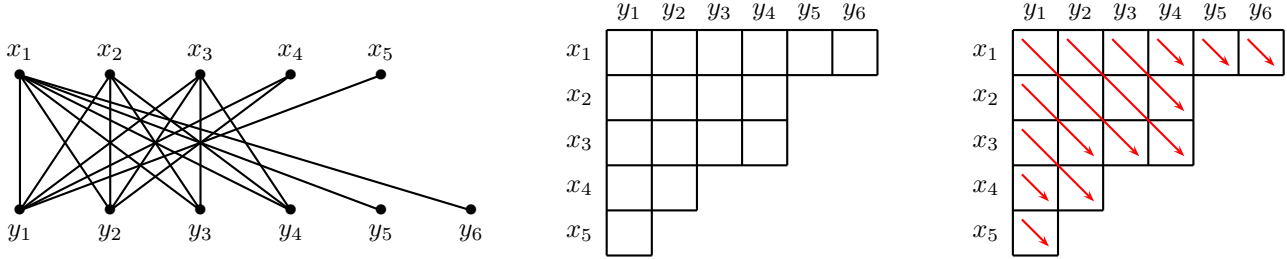}
\[
I_{\la}= (x_1y_1, x_1y_2, x_1y_3, x_1y_4, x_1y_5, x_1y_6, x_2y_1, x_2y_2,
x_2y_3, x_2y_4, x_3y_1, x_3y_2, x_3y_3, x_3y_4, x_4y_1, x_4y_2,
x_5y_1)
\]
\caption{Ferrers graph, tableau, reduction and ideal for $\lambda=(6,4,4,2,1)$.}
\label{fig:Ferrers tablau}

\end{figure}
\smallskip

In the second step, we investigate the ideals that one obtains from Ferrers ideals by \emph{specialization}, that is, by substituting $y_j \mapsto x_j$. In order to infer properties of the resulting ideals, one wants to preserve the number of generators in this process. This forces us to adjust the traditional notation.  Given a partition $\lambda = (\lambda_1,\ldots,\lambda_n)$, let $\mu = (\mu_1,\ldots,\mu_n) \in {\mathbb Z}^n$ be
a vector such that $0 \leq \mu_1 \leq \cdots \leq \mu_n <
\lambda_n$. Form a diagram ${\mathbf T}_{\lambda - \mu}$, obtained from  ${\mathbf T}_{\lambda}$ by removing the leftmost  $\mu_i$ boxes in row $i$ (see Figure \ref{fig:specializedHoles}). The ideal whose generators correspond to the cells of ${\mathbf T}_{\lambda - \mu}$ was called a  \emph{generalized Ferrers ideal} $I_{\lambda-\mu}$ in \cite{CorsoNagel2}. Thus,  
\[
I_{\lambda - \mu} := (x_i y_j \s 1 \leq i \leq n, \mu_i < j \leq
\lambda_i) \subset K[x_1,\ldots,x_n, y_1,\ldots,y_m].
\]
It is isomorphic to a Ferrers ideal. Substituting $y_j \mapsto x_j$ gives the \emph{specialized Ferrers ideal} 
\[
\overline I_{\lambda - \mu} := (x_i x_j \s 1 \leq i \leq n, \mu_i < j \leq
\lambda_i)  \subset K[x_1,\ldots,x_{\max \{n, m\}}].
\]
In order to guarantee that $I_{\lambda - \mu}$ and  $\overline I_{\lambda - \mu}$ have the same number of minimal generators we assume throughout $\mu_i \ge i-1$ for $i = 1,\ldots,n$. Thus, ${\mathbf T}_{\lambda - \mu}$ is a skew shape.  Notice that specialized Ferrers ideals are a proper generalization of Ferrers ideals, which one obtains if $\mu_1 = \cdots = \mu_n \ge n$  (see Figure \ref{fig:no new minors by symm}). 


\begin{figure}[h!]
\includegraphics{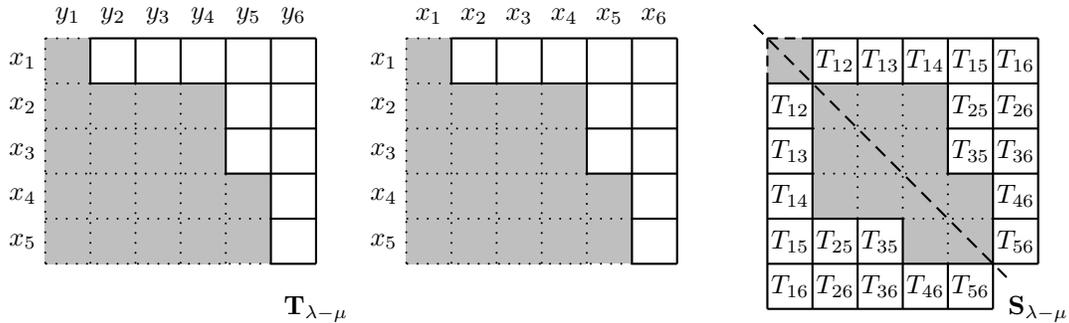}
\caption{A skew shape and its symmetrization for $\lambda =(6,6,6,6,6)$ and  $\mu =(1,4,4,5,5)$.}
\label{fig:specializedHoles} 
\end{figure}

By \cite{CorsoNagel2}, the ideal $I_{\lambda - \mu}$ and its specialization $\overline I_{\lambda - \mu}$ have closely related minimal free resolutions. Both are supported on a polyhedral cell complex whose faces can be read off from ${\mathbf T}_{\lambda - \mu}$. Thus, one wonders if  also their reductions are similarly related. Surprisingly, this is not the case.  Properties of reductions are governed by blow-up rings. In Theorem~\ref{thm:special-fiber-ring-is-ladder-determinantal}, we determine the equations of the special fiber ring of $\overline I_{\lambda - \mu}$. More precisely, these equations can be taken as 2-minors of a subregion ${\mathbf S}_{\lambda - \mu}$ of a generic matrix, where ${\mathbf S}_{\lambda - \mu}$ is obtained from ${\mathbf T}_{\lambda - \mu}$ by reflecting about the main diagonal (see Figure 
\ref{fig:specializedHoles}). Notice that, depending on $\mu$, the symmetrized tableau  ${\mathbf S}_{\lambda - \mu}$ may have holes in the middle!  A modification of this construction also allows us to identify the Rees algebra of ${\overline I}_{\lambda - \mu}$ as determinantal (see Corollary~\ref{cor:Rees-algebra}). In order to establish these results we first show that the 2-minors in the symmetrized region ${\mathbf S}_{\lambda - \mu}$ form a Gr\"obner basis of the ideal they generate (see Theorem~\ref{thm:Minors-Grob}). We then apply liaison-theoretic methods in order to analyze the corresponding initial ideals. In particular, we show that they correspond to vertex decomposable simplicial complexes  and thus are Cohen-Macaulay. To conclude, we also use  a localization argument to prove that the determinantal ideals are prime (see Proposition~\ref{prop:primality}) and determine the dimension of the special fibers ring (see Proposition~\ref{prop:dim-fibers-ring}).  


Notice that Theorem~\ref{thm:special-fiber-ring-is-ladder-determinantal} generalizes the identification of the special fiber ring of a Ferrers ideal in \cite[Proposition 5.1]{CorsoNagel}. We apply  Theorem~\ref{thm:special-fiber-ring-is-ladder-determinantal} to determine explicit minimal reductions of arbitrary Ferrers ideals (see Theorem~\ref{thm:min-red-Ferrers}) and of strongly stable specialized Ferrers ideals (see Theorem~\ref{thm:min-red-specialized-Ferr}). 
Their reduction numbers are found in Theorems \ref{thm:red number Ferrers} and \ref{thm:red number spec Ferrers}.  The latter results are based on formulas for the Hilbert functions of the special fiber rings to generalized and specialized Ferrers ideals in  Section \ref{sec:numbers}. There we also 
 establish a result that relates the reduction number to the Castelnuovo-Mumford regularity of a special fiber ring (see Proposition \ref{prop:red number - reg}), which is of independent interest. It allows us to determine the reduction numbers in our Dedekind-Mertens-like formulas.  

There is an extensive literature on the Hilbert functions of determinantal rings (see, e.g, \cite{Abhy,  CnH, Gh, DnG, 
HT, KM, KP, KR, Kulkarni, R}. It often involves path counting arguments. Instead, we use a liaison-theoretic approach, based on the theory of Gorenstein liaison (see \cite{KMMNP, Mig}). 
\medskip 

{
We hope that our results motivate further investigations.  Thus, we conclude the article with some specific open questions outlined in Section~\ref{sec:Problems}. In particular, we discuss problems regarding   the shape  of minimal free resolutions, finding explicit minimal reductions, generalizations to higher minors, and  some connections to algebraic statistics. 
}



\section{Symmetric tableaux with holes: Gr\"obner bases}\label{sec:Gbases}
\label{sec:symmetric-ladders-G-bases}

In this section we determine Gr\"obner bases of a new class of  determinantal ideals, as mentioned in the introduction. We start by recalling our standard notation that is used throughout the paper.   The vector $\la = (\la_1,\ldots,\la_n)$ is a partition,  and $\mu = (\mu_1,\ldots,\mu_n)$ an integer vector such that 
\[
	0 \le \mu_1 \le \cdots \le \mu_n <  \la_n \le \cdots \le \la_1 =: m
\]
and $	\mu_i \ge i-1$  for all $i = 1,\ldots,n$. 
Entries of the tableaux  $\bT_{\la - \mu}$ correspond to variables $T_{i j}$ in the polynomial ring
\[
	K[\bT_{\la - \mu}] := K[T_{i j} \s 1 \le i \le n, \mu_i < j \le \la_i].
\]
Thinking of $\bT_{\la - \mu}$ as a subtableau of an $m \times n$ matrix,  the {\em symmetrized tableau} $\bS_{\la - \mu}$ is obtained by reflecting $\bT_{\la - \mu}$ along the main diagonal.  Note that the resulting symmetrization  may have holes along the main diagonal.

\begin{Example}
  \label{ex-symmetrizd tabeaux}
Consider $\la =(5, 5, 4)$ and $\mu = (1, 3, 3)$. Then we get

\begin{figure}[h!]
\includegraphics{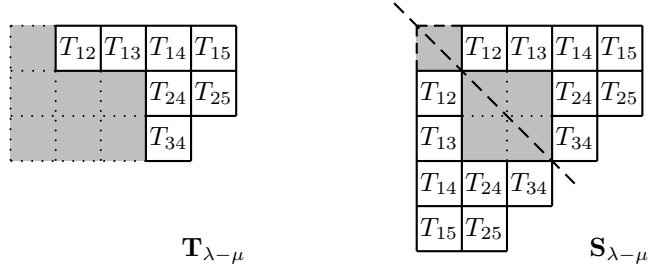}
\caption{Example of a symmetrized tableaux.}  
\end{figure}
\end{Example}

\newpage
Crucially, note also that in general   neither the  tableau $\bTlm$ nor $\bSlm$ is a ladder or a symmetric ladder, resp., in the usual sense (see, e.g., \cite{Conca} and \cite{G}).
\begin{Example}
  \label{ex:no-ladder}
Consider $\la =(5, 5, 4)$ and $\mu = (1,3,3)$. Then the variables $T_{1, 3}$ and $T_{2, 4}$ are in the tableau $\bTlm$. However, $T_{2, 3}$ is not in $\bTlm$ nor in  $\bSlm$, so the tableaux $\bTlm$ and  $\bSlm$ are not  ladders.
\end{Example}

Denote  by $I_2 (\bTlm)$ and $I_2 (\bSlm)$ the ideals in $K[\bTlm]$ generated by the determinants of $2 \times 2$ submatrices of $\bTlm$ and $\bSlm$, respectively.

\begin{Example}[Example \ref{ex-symmetrizd tabeaux}, continued]
  \label{ex-symmetrizd tabeaux contd}
For $\la$ and $\mu$ as in Example \ref{ex-symmetrizd tabeaux}, we get
\[
I_2 (\bTlm) = (T_{1 4} T_{2 5} - T_{1 5} T_{2 4})
\]
and
\[
I_2 (\bSlm) = (T_{1 4} T_{2 5} - T_{1 5} T_{2 4}, T_{12} T_{34} - T_{13} T_{24}).
\]
\end{Example}

The main result of this section is a Gr\"obner basis computation (see Theorem \ref{thm:Minors-Grob}).  To this end, we fix throughout this section the lexicographic order $\<$  on the monomials in $K[\bTlm]$, where the variables are ordered row-wise, that is, $\<$ is the lexicographic order induced by
\[
    T_{1,\mu_1+1} > T_{1,\mu_1+2}>\dots >T_{1,\lambda_1} >
    T_{2,\mu_2+1} > \dots > T_{2,\lambda_2} >
    \dots > T_{n,\lambda_n}.
\]
Equivalently, $T_{rc}> T_{r'c'}$ if $r<r'$, or if $r=r'$ and $c<c'$. Note that this is a diagonal term order, that is, the leading term of any minor is the main diagonal term.

\begin{Theorem}
    \label{thm:Minors-Grob}
\begin{itemize}
  \item[(a)] The 2-minors of $\bTlm$ form a Gr\"obner
  basis of  $I_2(\bTlm)$ with respect to the lexicographic order $\<$.

  \item[(b)] The 2-minors of $\bSlm$ form a Gr\"obner
  basis of  $I_2(\bSlm)$ with respect to the lexicographic order $\<$.
\end{itemize}
\end{Theorem}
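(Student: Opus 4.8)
The plan is to verify Buchberger's criterion for the set of $2$-minors with respect to the diagonal term order $\prec$. Since $\prec$ is diagonal, the leading term of the $2$-minor on rows $a<b$ and columns $c<d$ is its main-diagonal product $T_{ac}T_{bd}$; in case (b) I would first rewrite each minor of $\bSlm$ in the upper-triangular variables of $K[\bTlm]$ via the identification $T_{ij}=T_{ji}$, recording its leading term as the corresponding upper-triangular monomial. As the minors already generate the respective ideal, it suffices to show that every $S$-polynomial of a pair of $2$-minors reduces to zero modulo the $2$-minors, and by the product criterion we may restrict attention to pairs whose leading terms share a variable.

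For part (a), I would first isolate the decisive structural feature of $\bTlm$: since $\mu_1\le\cdots\le\mu_n$ and $\la_1\ge\cdots\ge\la_n$, the column supports $(\mu_i,\la_i]$ are nested, the support of a lower row being contained in that of every higher row. Consequently a $2$-minor on rows $a<b$ exists precisely when both of its columns lie in the support $(\mu_b,\la_b]$ of the lower row, and then both columns automatically lie in the support of row $a$ as well. I would then run through the possible overlaps of two diagonal leading terms $T_{ac}T_{bd}$ and $T_{eg}T_{fh}$---equal top-left corners, equal bottom-right corners, or the top-left corner of one equal to the bottom-right corner of the other---and in each case carry out the classical three-term reduction that rewrites the $S$-polynomial as a monomial multiple of a third $2$-minor. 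The only point beyond the generic computation is to check that each intermediate minor actually occurs in $\bTlm$; this is exactly where the nested-support property is used, since every column index appearing in the reduction already lies in the support of the relevant lower row.

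For part (b) the same scheme applies to $\bSlm$, with two genuinely new features. First, $\bSlm$ contains minors that straddle the main diagonal, built from both $\bTlm$ and its reflection; after the substitution $T_{ij}=T_{ji}$ these contribute additional generators, as in Example~\ref{ex-symmetrizd tabeaux contd}. Second, the symmetric identification of variables creates leading-term overlaps with no counterpart in the non-symmetric case, so there are more $S$-pairs to examine: a minor of $\bTlm$ may share a variable with a straddling minor, and two straddling minors may overlap either directly or through the diagonal. I would organize these pairs according to where the shared variable sits relative to the diagonal, reduce each $S$-polynomial by the analogue of the three-term identity read inside the symmetric matrix, and again verify that every intermediate symmetric minor is present in $\bSlm$, using the nested structure of the reflected region to guarantee this even though $\bSlm$ may have holes along the main diagonal.

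The main obstacle, in both parts but especially in (b), is precisely this bookkeeping: the reductions are formally the standard ones for a generic (symmetric) matrix, but the presence of holes forces one to certify at each step that the minor used to reduce is not one of the missing ones. I expect the nested column-support property---together with its symmetric analogue for $\bSlm$---to be exactly the combinatorial input that makes every required intermediate minor available, and I would isolate this as a short lemma before the case analysis, so that the Buchberger reduction itself becomes a routine, if lengthy, verification.
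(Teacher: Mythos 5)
Your overall strategy---Buchberger's criterion for the diagonal order, after discarding pairs with coprime leading terms---is the same as the paper's, and your nested-support observation is correct and does essentially suffice for part (a): there, the configurations arising from an overlap of leading terms involve at most three rows whose relevant columns all lie in the support of the lowest row involved, so the intermediate minors of the classical reductions are present. The paper itself notes that (a) is the easy case. The gap is in part (b), and it sits exactly in the step you propose to dispose of as a ``short lemma'': the claim that nestedness, together with its symmetric analogue, guarantees that every intermediate minor needed by the standard reduction exists in $\bSlm$. That claim is false; the holes destroy precisely such minors, and this is not a boundary nuisance but the central difficulty of the theorem.

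Concretely, take $\la=(6,6,6,6)$ and $\mu=(1,3,3,3)$, so that position $(2,3)$ is a hole of $\bSlm$. The minors $M_1=T_{1,3}T_{4,6}-T_{1,6}T_{3,4}$ (rows $1,4$, columns $3,6$) and $M_2=T_{1,2}T_{4,6}-T_{1,4}T_{2,6}$ (rows $2,4$, columns $1,6$) are $2$-minors of $\bSlm$, their leading terms share $T_{4,6}$, their trailing terms are coprime, and $S(M_1,M_2)=\pm\left(T_{1,2}T_{1,6}T_{3,4}-T_{1,3}T_{1,4}T_{2,6}\right)$ has leading term $T_{1,2}T_{1,6}T_{3,4}$. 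The only quadratic divisor of this monomial that can occur as the leading term of a $2$-minor of a symmetric matrix is $T_{1,2}T_{3,4}$, and a generic symmetric matrix has two distinct minors with that leading term: $T_{1,2}T_{3,4}-T_{1,4}T_{2,3}$ (rows $1,3$, columns $2,4$) and $T_{1,2}T_{3,4}-T_{1,3}T_{2,4}$ (rows $2,3$, columns $1,4$). The first---the one the naive generic-matrix reduction would call for---does not exist in $\bSlm$ because of the hole at $(2,3)$; one is forced onto the second, a minor straddling the diagonal, and must then also certify that the follow-up minor on rows $1,2$, columns $4,6$ is present. Which substitute reduction is available, and why its entries exist, depends delicately on the relative positions of the indices and on the hypothesis $\mu_i\ge i-1$, not merely on nestedness; establishing this uniformly is exactly the content of the four subcases A.1, A.2, B.1, B.2 inside Case III of the paper's proof, which handle precisely the situation where the entry needed by the naive reduction (the paper's $T_{m'}$) is absent. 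So your proposal does not just postpone routine bookkeeping to a lemma: the lemma is false as stated, and repairing it amounts to reconstructing the paper's configuration-by-configuration analysis with alternative reduction paths.
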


\begin{proof}
We first show (b).  We use induction on the number of rows, $n$, of $\bTlm$. If $n =1$, then $I_2 (\bSlm) = 0$, so the claim is clearly true. Let $n \ge 2$. Then we consider the partition $\widetilde{\la}$ differing from $\la$ only in its
last part:
\[
\widetilde{\la} := (\la_1,\ldots,\la_{n-1}, \la_n - 1).
\]
Thus, the tableaux  $\bTlm$  is obtained from ${\bTwlm}$ by adding
a new right-most box in its last row. Using induction on the number of variables in row $n$ of $\bTlm$ we may assume that the 2-minors of $\bSwlm$ form a Gr\"obner basis of $I_2 (\bSwlm)$ with respect to the lexicographic order. To show the analogous claim for the 2-minors of $\bSlm$ we simply use Buchberger's Criterion (see, e.g.,  \cite{CLO})     and show that the S-polynomial of any two minors has remainder zero after at most four steps of the division algorithm.

Let $M_1$ and $M_2$ be two distinct $2$-minors of the symmetric tableau $\bSlm$.    To simplify notation, throughout much of this proof, let us use a single index to denote the row and column indices for the variables in    $\bSlm$.
Let
    \begin{align*}
        M_1 :=& T_dT_b-T_aT_c, \\
        M_2 :=& T_lT_k-T_eT_f,
    \end{align*}
    where the positive term in each binomial represents the initial term of the minor with respect to the order $\<$.

We may assume that the initial terms of $M_1$ and $M_2$ are not relatively prime,
    since their S-polynomial reduces to zero otherwise (see, for example, \cite[Proposition 2.9.4]{CLO}).
    In addition, if the leading terms are not relatively prime, say $T_l=T_d$,
    we may assume that the trailing terms \emph{are} relatively prime.  Indeed, if for example $T_a=T_e$,
    the S-polynomial will be a multiple of another quadric in the ideal (another $2$-minor of $\bSlm$), and will reduce to zero:
    \[
        S(M_1,M_2) = T_e(T_bT_f - T_kT_c) .
    \]
Thus we may assume that $\{T_e, T_f\} \cap \{T_a,T_c\} = \emptyset$. Since $M_1$ and $M_2$ are minors, it follows that either the support of $M_1 M_2$ consists of exactly 7 variables or it consists of 6 variables and the leading monomials of $M_1$ and $M_2$ are equal. Furthermore, by symmetry of $\bSlm$, we may assume that all the 2-minors we are considering are determinants of matrices whose south-east corners are not in the lower half of $\bSlm$. It follows that any such minor with $T_{n, \la_n}$ in its support is the determinant of a matrix with $T_{n, \la_n}$ in its south-east corner, where $T_{n, \la_n}$ is located in $\bTlm$. Moreover, if $T_{n, \la_n}$ does not divide the leading term of such a minor, then the minor is in $I_2 (\bSwlm)$. In any case, the row indices of the matrices determining our minors are at most $n$.

We now treat separately the possibilities for the variable $T_{n, \la_n}$ to appear in the support of one, both, or none of the two minors.
\smallskip

\paragraph{\bf \emph{Case I}} Suppose that $T_{n, \la_n}\not\in supp(M_1) \cup supp(M_2)$. Then $M_1, M_2 \in I_2 (\bSwlm)$. By induction hypothesis, the S-polynomial $S(M_1, M_2)$ can be reduced to zero using 2-minors in $I_2 (\bSwlm)$. Thus, this is true in $I_2 (\bSlm)$ as well.
\smallskip

\paragraph{\bf\emph{Case II}} Suppose that $T_{n, \la_n}\in supp(M_1)$, but $T_{n, \la_n} \not\in  supp(M_2)$, say, $T_{n, \la_n}=T_d$.
By the leading term criterion, we may assume that $T_b$ appears in the leading monomial of $M_2$. Letting $l=b$ provides
    \begin{align*}
        M_1 &= T_{n, \la_n} T_b-T_a T_c , \\
        M_2 &= T_b T_k - T_e T_f .
    \end{align*}
Then $S(M_1,M_2) = T_aT_cT_k-T_eT_fT_{n, \la_n}$, and $T_b$ is located to the left and above of $T_{n, \la_n}$. Since by our convention on the south-east corners of minors $T_k$ is not in a row with index greater than $n$, the variable $T_k$ also must be located above row $n$. Thus, schematically, there are the following possibilities for the relative positions of the variables in the supports of $M_1$ and $M_2$:
            \[  \begin{matrix}
                \ul T_k & T_f & \textcolor{dgreen}{T_{m'}}\\
                T_e & \textcolor{black}{\ul{ \ul T_b}} & T_a\\
                 & T_c & {\ul T_{n, \la_n}}
                \end{matrix}
                \quad  \textcolor{black}{\text{or }} \quad
                \begin{matrix}
                \textcolor{black}{\ul{ \ul T_b}} & T_a & T_e\\
                T_f & \textcolor{black}{T_{m'}} & {\ul T_k}\\
                T_c & {\ul T_{n, \la_n}}
                \end{matrix}
                \quad  \textcolor{black}{\text{or }} \quad
                \begin{matrix}
                \textcolor{black}{\ul{ \ul T_b}} & T_e  &  T_a\\  
                T_f & \ul T_k &  \textcolor{black}{T_{m'}}\\  
                 T_c  &  &   {\ul T_{n, \la_n}}    
                \end{matrix}.
            \]
The variables in each initial term are underlined, making the common one underlined twice. Furthermore, $T_{m'}$ denotes a variable that must be present in $\bTlm$ because $T_{n, \la_n}$ is. It will be used for reduction. Indeed,  in all cases we can reduce the S-polynomial to zero because
            \[
                S(M_1,M_2)   = T_c(T_aT_k-T_eT_{m'}) + T_e(T_cT_{m'}-T_fT_{n, \la_n}).
            \]
Notice that the order of the two steps in the division algorithm depends on the leading term of $S(M_1, M_2)$. The indicated reduction works in all cases.
\smallskip

\paragraph{\bf\emph{Case III}} Finally, suppose $T_{n, \la_n} = supp(M_1)\cap supp(M_2)$, say,
    $T_{n, \la_n} = T_d = T_l$, and the support of $M_1 M_2$ consists of 7 variables.  Then
    \begin{align*}
        M_1 &= T_{n, \la_n}T_b-T_aT_c , \\
        M_2 &= T_{n, \la_n}T_k-T_eT_f,
    \end{align*}
    where $T_k \neq T_b$, and
    \[
        S(M_1,M_2) = T_kT_aT_c - T_bT_eT_f.
    \]
The variables $T_b$ and $T_k$ must be located to the left and above of $T_{n, \la_n}$.    One typical situation  for the positions of the involved variables is:
\\
            \[ \begin{matrix}
              \ul T_k & \textcolor{dgreen}{T_{n'}} & T_e\\
              \textcolor{dgreen}{T_{m'}} & \ul T_b & T_a\\
              T_f & T_c & \textcolor{black}{\ul{ \ul T_{n, \la_n}}}
                \end{matrix} .
            \]
As before, $T_{m'}$ and $T_{n'}$ denote variables whose presence is established if it is needed in the reduction process.

Indeed, assume $T_k$ is not located in the lower half of $\bSlm$. Then $T_{n'}$ is in $\bTlm$. Thus, the division algorithm provides
\[
S(M_1, M_2) - T_a (T_k T_c - T_f T_{n'}) = T_f (T_a T_{n'} - T_b T_e).
\]
Otherwise, if $T_k$ is in the lower half of $\bSlm$, then $T_{m'}$ must be present there as well. This time the division algorithm gives
\[
S(M_1, M_2) - T_c (T_k T_a - T_e T_{m'}) = T_e (T_c T_{m'} - T_b T_f).
\]
Hence in both cases the S-polynomial reduces to zero.

The other typical situation is:
            \\
            \[ \begin{matrix}
                 & \ul T_b & T_a\\
                 \ul T_k & \textcolor{dgreen}{T_{m'}} & T_e\\
                 T_f & T_c & \textcolor{black}{\ul{ \ul T_{n, \la_n}}}
                \end{matrix} .
            \]
Assume first that the variable $T_{m'}$ is present in $\bSlm$.
There are two cases. If the leading term of the S-polynomial is $T_kT_aT_c$, then we use  the minor $T_kT_c-T_{m'}T_f$ whose leading term divides $T_kT_aT_c$. Thus the division algorithm provides
\[S(M_1,M_2) - T_a(T_kT_c-T_{m'}T_f) = T_f (T_aT_{m'}-T_bT_e),\]
and hence the S-polynomial reduces to zero as a multiple of another minor.
Otherwise, if the leading term of the S-polynomial is $T_bT_eT_f$, then it is divisible by the leading term of $T_bT_e-T_aT_{m'}$. Thus we can again reduce the S-polynomial to zero using the division algorithm:
\[ S(M_1,M_2) + T_f(T_bT_e-T_aT_{m'}) = T_a(T_kT_c-T_fT_{m'}).
\]

It remains to consider the case where the variable $T_{m'}$ is not present in the tableau $\bSlm$. It follows that $T_b$ and $T_e$ must be located in the upper half of $\bSlm$,  whereas $T_k$ and $T_c$ are in the lower half of the tableau. In particular,  none of these variables is on its main diagonal. We need to keep track of the positions of the involved variables. Denote the rows and columns of the locations of these variables by $i, j, n$ and $p, q, \la_n$, respectively. Thus, 
            \begin{equation}
              \label{eq:ineq1}
              i < j < n \le \la_n, \quad p < q < \la_n, \quad p < j, \text{ and }  i < q < n.
            \end{equation}
            Returning to the original double indices for the variables, the above diagram becomes
            \[ \begin{matrix}
              & p & q & \la_n \\[2pt]
             i && T_{i, q} & T_{i, \la_n}\\
             j & T_{p, j} & & T_{j, \la_n}\\
             n & T_{p. n} & T_{q, n} & T_{n, \la_n}
            \end{matrix}.
            \]
            Here we included the row and column indices and wrote the variables in the form $T_{k, l}$ with $k \le l$. Notice that the S-polynomial of the minors $M_1$ and $M_2$ now reads as
            \begin{equation*}
              S(M_1, M_2) = T_{i, q} T_{j, \la_n} T_{p, n} - T_{i, \la_n} T_{p, j} T_{q, n}.
            \end{equation*}
            The non-presence of the variable $T_{m'}$ means that $j \le q \le \mu_j$ or $q \le j \le \mu_q$.

            \paragraph{\bf\emph{Case A}} Assume $j \le q \le \mu_j$. Now we consider two subcases by comparing $i$ and $p$.

            \paragraph{\bf\emph{Case A.1}} Assume $i < p$. Then  the following relations hold: 
            \begin{equation*}
              \label{eq:ineq2}
              i <  p  < j \le q < n \le \la_n.
            \end{equation*}
            Hence the leading monomial of $S(M_1, M_2)$ is $T_{i, q} T_{j, \la_n} T_{p, n}$. Using rows $i, p$ and columns $q, n$ we claim that
            \[
            T_{i,q} T_{p, n} - T_{i, n} T_{p, q} \in I_2 (\bSlm).
            \]
            Indeed, since $i < n$ and $T_{p, n}$ is present in $\bTlm$, its column $n$ also contains $T_{i, n}$. Moreover, the presence of $T_{p, j}$ means that $\mu_p < j$. Since $j \le q < n \le \la_p$, we conclude that $T_{p, q}$ is in $\bTlm$. This shows the existence of the above minor. Its leading monomial is $T_{i,q} T_{p, n}$. Hence we can use it in the division algorithm for reducing $S(M_1, M_2)$. We obtain
            \[
            S(M_1, M_2) - T_{j, \la_n} (T_{i,q} T_{p, n} - T_{i, n} T_{p, q}) = T_{i, n} T_{p, q} T_{j, \la_n}  - T_{i, \la_n} T_{p, j} T_{q, n} =: F.
            \]
            The leading monomial of $F$ is $T_{i, n} T_{p, q} T_{j, \la_n}$. Now, using rows $i, j$ and columns $n, \la_n$, we claim
            \[
            T_{i, n} T_{j, \la_n} - T_{i, \la_n} T_{j, n} \in I_2 (\bSlm).
            \]
            To this end it is enough to see that the variable $T_{j, n}$ is present in $\bSlm$ if $n < \la_n$. However, $T_{q, n} \in \bTlm$ implies $\mu_q < n$. Hence, using $j \le q$, we conclude that $\mu_j \le \mu_q < n \le \la_q \le \la_j$ which gives $T_{j, n} \in \bTlm$. If $n < \la_n$, then the leading monomial of the last minor is $T_{i, n} T_{j, \la_n}$. Thus, we can use it in another step of the division algorithm. We get
            \[
            F - T_{p, q} (T_{i, n} T_{j, \la_n} - T_{i, \la_n} T_{j, n}) = T_{i, \la_n} (T_{p, q} T_{j, n} - T_{p, j} T_{q, n}).
            \]
            Notice that this is also true if $n = \la_n$.  Using, rows $j, q$ and columns $p, n$ we see that $T_{p, q} T_{j, n} - T_{p, j} T_{q, n}$ is a minor of $\bSlm$ or trivial if $j = q$. In both cases, $S(M_1, M_2)$ reduces to zero.

            \paragraph{\bf\emph{Case A.2}} Assume $i \ge p$. Then  the following relations hold: 1
            
            \[
            p \le i < j \le q < n.
            \]
            It follows that the leading monomial of $S(M_1, M_2)$ is $T_{p, j} T_{i, \la_n} T_{q, n}$. Using rows $j, q$ and columns $p, n$ we claim that
            \[
            T_{p, j} T_{q, n} - T_{j, n} T_{p, q} \in I_2 (\bSlm).
            \]
            Indeed, since $q < n$ and $T_{q, n}$ is in $\bTlm$, its column $n$ also contains $T_{j, n}$. As above, the presence of $T_{p, j}$ means that $\mu_p < j$. Since $j \le q \le \la_j$, we get $T_{p, q} \in \bTlm$, as desired. The leading monomial of the above minor is $T_{p, j} T_{q, n}$. Thus, the division algorithm provides
            \[
            S(M_1, M_2) + T_{i, \la_n} (T_{p, j} T_{q, n} - T_{j, n} T_{p, q}) = - T_{p, q} T_{i, \la_n} T_{j, n}  + T_{p, n} T_{i, q} T_{j, \la_n}  =: F.
            \]
            The leading monomial of $F$ is $T_{p, q} T_{i, \la_n} T_{j, n}$. Using rows $p, i$ and columns $q, \la_n$ we claim that
            \[
            T_{p, q} T_{i, \la_n} - T_{p, \la_n} T_{i, q} \in I_2 (\bSlm).
            \]
            Indeed, we have already seen $T_{p, q} \in \bTlm$. Moreover, since
            $T_{i, \la_n}$   is in column $\la_n$ and $p \le i$, this column also
            contains $T_{p, \la_n}$. The leading monomial of the last minor is $T_{p, q} T_{i, \la_n}$. Hence, another step in the division algorithm gives
            \[
            F + T_{j, n} (T_{p, q} T_{i, \la_n} - T_{p, \la_n} T_{i, q}) = T_{i, q} (T_{p, n} T_{j, \la_n} - T_{p, \la_n} T_{j, n}).
            \]
            Since this is trivial or a multiple of a minor of $\bSlm$ using rows $p, j$ and columns $n, \la_n$,  $(M_1, M_2)$ has been reduced to zero, as desired.

             \paragraph{\bf\emph{Case B}} Assume $q \le j \le \mu_q$. Again we consider two subcases by comparing $i$ and $p$.

            \paragraph{\bf\emph{Case B.1}} Assume $i \le p$. This implies  the relations
            \[
            i \le p < q \le j < n \le \la_n.
            \]
            Thus, the leading monomial of $S(M_1, M_2)$ is $T_{i, q} T_{p, n} T_{j, \la_n}$. Using rows $q, j$ and columns $i, \la_n$ we obtain
            \[
             T_{i, q} T_{j, \la_n} - T_{i, j} T_{q, \la_n}  \in I_2 (\bSlm).
            \]
            Indeed, since $q \le j$ and $T_{j, \la_n}$ is present in $\bTlm$, its column $n$ also contains $T_{q, \la_n}$. Moreover, the presence of $T_{p, j}$ means $\mu_p < j$. Hence, we get  $\mu_i \le \mu_p <j < n \le \la_n \le \la_i$, so $T_{i, j} \in \bTlm$. The leading term of the above minor is $T_{i, q} T_{j, \la_n}$. Applying the division algorithm, we obtain
            \[
            S(M_1, M_2) - T_{p, n} ( T_{i, q} T_{j, \la_n} - T_{i, j} T_{q, \la_n}) =  T_{i, j} T_{p, n}  T_{q, \la_n} - T_{i, \la_n} T_{p, j} T_{q, n} =: F
            \]
            with leading monomial $T_{i, j} T_{p, n}  T_{q, \la_n}$. Using rows $i, p$ and columns $j, n$ we claim that
            \[
            T_{i, j} T_{p, n} - T_{i, n} T_{p, j} \in I_2 (\bSlm).
            \]
            Indeed, column $n$ of $\bTlm$ contains $T_{p, n}$. Since $i \le p$, the variable $T_{i, n}$ is also in this row. Observe that the leading term of this minor is $T_{i, j} T_{p, n}$. Using the minor for another step of the division algorithm we obtain
            \[
            F - T_{q, \la_n} (T_{i, j} T_{p, n} - T_{i, n} T_{p, j}) = T_{p, j} (T_{i, n} T_{q, \la_n} - T_{i, \la_n} T_{q, n}).
            \]
            This polynomial is trivial or a minor of $\bSlm$ using rows $i, q$ and columns $n, \la_n$. Hence it reduces to zero.

            \paragraph{\bf\emph{Case B.2}} Assume $i > p$. Then the following relations hold: 
            \[
            p < i \le q \le j < n.
            \]
            Thus, the leading term of $S(M_1, M_2)$ is $T_{p, j} T_{i, \la_n} T_{q, n}$. Using rows $p, i$ and columns $j, \la_n$ we get
            \[
            T_{p, j} T_{i, \la_n} - T_{p, \la_n} T_{i, j} \in I_2 (\bSlm).
            \]
            Indeed, $T_{p, \la_n}$ is in column $n$ of $\bTlm$ because $T_{i, \la_n}$ is and $p < i$. Furthermore, the presence of $T_{i, q}$ implies $\mu_i < q \le j < n \le \la_i$, and thus $T_{i, j} \in \bTlm$. Notice that the leading monomial of the last minor is $T_{p, j} T_{i, \la_n}$. Now the division algorithm gives
            \[
            S(M_1, M_2) + T_{q, n} (T_{p, j} T_{i, \la_n} - T_{p, \la_n} T_{i, j}) = T_{p, n} T_{i, q} T_{j, \la_n}  - T_{p, \la_n} T_{i, j} T_{q, n} =: F \]
            whose leading monomial is $T_{p, n} T_{i, q} T_{j, \la_n}$. Using rows $q, j$ and columns $i, \la_n$, we claim
            \[
            T_{i, q} T_{j, \la_n} - T_{i, j} T_{q, \la_n} \in I_2 (\bSlm).
            \]
            To this end it suffices to notice that $T_{q, \la_n}$ is present in column $\la_n$ of $\bTlm$ because $T_{j, \la_n}$ is and $q \le j$. Since $T_{i, q} T_{j, \la_n}$ is the leading monomial of this minor we can use it in the division algorithm. We get
            \[
            F - T_{p, n} (T_{i, q} T_{j, \la_n} - T_{i, j} T_{q, \la_n}) = T_{i, j} (T_{p, n} T_{q, \la_n} - T_{p, \la_n} T_{q, n}).
            \]
            Again, this is zero or a minor of $\bTlm$ using rows $p, q$ and columns $n, \la_n$. Hence  $S(M_1, M_2)$ has been reduced to zero.
\smallskip

\paragraph{\bf\emph{Case IV}} Suppose the leading monomials of $M_1$ and $M_2$ are equal and divisible by $T_{n, \la_n}$. Thus, the support of $M_1 M_2$ consists of 6 variables. In order to keep track of locations we use again double indices. Write the leading monomial of $M_1$ as $T_{i, j} T_{n, \la_n}$.  Since $M_1 \neq M_2$,we must have $i \neq j$, say $i < j$. Thus, we may assume
\begin{align*}
  M_1 = & T_{i, j} T_{n, \la_n} - T_{i, \la_n} T_{j, n}\\
  M_2 = & T_{i, j} T_{n, \la_n} -  T_{i, n} T_{j, \la_n},
\end{align*}
where $n < \la_n$. Hence
\[
S(M_1, M_2) = T_{i, n} T_{j, \la_n} - T_{i, \la_n} T_{j, n},
\]
which is a minor of $\bSlm$ using rows $i, j$ and columns $n, \la_n$.
This completes the proof of (b).
\smallskip

Finally, consider Claim (a). It also follows from the above arguments, but its proof is simpler because the second situation in Case III above does not occur. We omit the details. 
\end{proof}


\section{Symmetric tableaux with holes: Invariants}\label{sec:invariants}
\label{sec:symmetric-ladders-invariants}

Theorem \ref{thm:Minors-Grob} allows us to compute the initial ideals of the ideals
$I_2(\bTlm)$ and $I_2(\bSlm)$ with respect to the order $\<$. We use these to determine invariants of the  determinantal ideals themselves.
In order to analyze their properties we use a technique from liaison
theory.

\begin{Proposition}
  \label{prop:bdl}
Let $J \subset I \subset R = K[x_1,\ldots,x_r]$ be homogeneous
ideals such that $\hht J = \hht I - 1$. Let $f \in R$ be a
homogeneous polynomial of degree $d$ such that $J  : f = J$, and set $I' := f
\cdot I + J$. If $R/I$ and $R/J$ are Cohen-Macaulay, then so is
$R/I'$ and $\hht I' = \hht I$.

Moreover, the Hilbert functions of the involved rings are related by
\begin{equation*}
  \label{eq:hilb-liaison}
h_{R/I'} (j) = h_{R/I} (j-d) + h_{R/J} (j) - h_{R/J} (j-d) \quad \text{for all } j \in \ZZ.
\end{equation*}
\end{Proposition}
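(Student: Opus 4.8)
The plan is to realize $I'$ as a \emph{basic double link} of $I$ along $J$ and to extract every assertion from two short exact sequences of graded $R$-modules. First I would record the tautological sequence
\[
0 \to I/J \to R/J \to R/I \to 0,
\]
and then build the key sequence
\[
0 \to (I/J)(-d) \xrightarrow{\;\cdot f\;} R/J \to R/I' \to 0.
\]
To produce the second sequence, consider the graded homomorphism $\psi\colon I \to R/J$ of degree $d$ given by $x \mapsto \overline{fx}$. Its image is exactly $(fI + J)/J = I'/J$, which is the kernel of the natural surjection $R/J \twoheadrightarrow R/I'$ (valid since $J \subseteq I'$). The single place where the hypothesis $J : f = J$ is essential is in computing the kernel of $\psi$: it equals $(J : f) \cap I = J \cap I = J$, so $\psi$ induces an \emph{isomorphism} $(I/J)(-d) \xrightarrow{\sim} I'/J$. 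I expect this verification---identifying the kernel and hence the isomorphism $I'/J \cong (I/J)(-d)$---to be the conceptual crux of the argument; everything afterward is bookkeeping with the depth lemma and Hilbert functions.

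For the Cohen--Macaulay claim I would argue in two stages. Write $d_J := \dim R/J = r - \hht J$, so that $R/J$ is Cohen--Macaulay of depth $d_J$ and, since $\hht I = \hht J + 1$, the ring $R/I$ is Cohen--Macaulay of depth $d_J - 1$. Applying the depth lemma to the first sequence gives $\operatorname{depth}(I/J) \ge \min\{d_J,\, (d_J-1)+1\} = d_J$, while $\operatorname{depth}(I/J) \le \dim(I/J) \le \dim(R/J) = d_J$ forces equality; thus $I/J$ is Cohen--Macaulay of dimension $d_J$. Feeding this into the depth lemma for the second sequence yields
\[
\operatorname{depth}(R/I') \ge \min\{\operatorname{depth}(I/J) - 1,\ \operatorname{depth}(R/J)\} = d_J - 1 .
\]

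It remains to pin down $\dim R/I'$ and to conclude. Since $I' = fI + J$, a prime contains $I'$ if and only if it contains $J$ and either contains $I$ or contains $f$; hence $V(I') = V(J + (f)) \cup V(I)$. Because $J : f = J$ says precisely that $f$ is a nonzerodivisor modulo $J$, one has $\dim R/(J+(f)) = d_J - 1$, and $\dim R/I = d_J - 1$ as well, so $\dim R/I' = d_J - 1 = r - \hht I$. This gives $\hht I' = \hht I$, and together with the bound $\operatorname{depth}(R/I') \ge d_J - 1 = \dim R/I'$ it shows that $R/I'$ is Cohen--Macaulay.

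Finally, the Hilbert function formula drops out of the same two sequences by additivity. The second sequence gives $h_{R/I'}(j) = h_{R/J}(j) - h_{I/J}(j-d)$, and the first gives $h_{I/J}(j-d) = h_{R/J}(j-d) - h_{R/I}(j-d)$. Substituting the latter into the former yields
\[
h_{R/I'}(j) = h_{R/I}(j-d) + h_{R/J}(j) - h_{R/J}(j-d) \quad \text{for all } j \in \ZZ,
\]
as claimed.
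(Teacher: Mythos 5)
Your proposal is correct, but it is worth noting that the paper does not actually prove this statement: it simply cites \cite[Lemma 4.8]{KMMNP}. What you have written is a self-contained reconstruction of the standard basic double link argument that underlies that citation. Your two exact sequences are the right ones, and the crux is exactly where you locate it: the hypothesis $J : f = J$ gives $\ker\psi = (J:f)\cap I = J$, hence the graded isomorphism $I'/J \cong (I/J)(-d)$, after which the depth lemma applied to $0 \to I/J \to R/J \to R/I \to 0$ (giving $\operatorname{depth} I/J = \dim I/J = \dim R/J$) and then to $0 \to (I/J)(-d) \to R/J \to R/I' \to 0$ yields $\operatorname{depth} R/I' \ge \dim R/J - 1$, while the decomposition $V(I') = V(J+(f)) \cup V(I)$ pins down $\dim R/I' = \dim R/I$ and hence both the Cohen--Macaulayness and $\hht I' = \hht I$; the Hilbert function identity then falls out by additivity. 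One small point deserves a remark: your assertion $\dim R/(J+(f)) = \dim R/J - 1$ uses not only that $f$ is a nonzerodivisor modulo $J$ but also that $R/J$ is equidimensional, which holds here because $R/J$ is a Cohen--Macaulay quotient of a polynomial ring; in your write-up this is implicit and should be said. Compared with the paper's approach, yours buys self-containedness and makes transparent exactly where each hypothesis enters ($J:f=J$ only in the kernel computation, Cohen--Macaulayness only in the depth estimates, the height condition only in the dimension count), at the cost of redoing an argument available in the liaison literature.
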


\begin{proof}
This is part of Lemma 4.8 in \cite{KMMNP}.
\end{proof}

\begin{Remark}
  \label{rem:bdl}
(i) The ideal $I' := f \cdot I + J$ is called a {\em basic double
link} of $I$. The name stems from the fact that $I'$ can be
Gorenstein linked to $I$ in two steps if $I$ is unmixed and $R/J$ is
Cohen-Macaulay and generically Gorenstein (\cite[Proposition
5.10]{KMMNP}).

(ii) A homogenous ideal $I$ is said to be {\em glicci} if it is in
the Gorenstein liaison class of a complete intersection. It then
follows that $I$ is Cohen-Macaulay. If $I$ is a squarefree monomial
ideal, then, following \cite[Definition 2.2]{NR}, $I$ is said to be
{\em squarefree glicci} if $I$ can be linked in an even number of
steps to a complete intersection $I'$  generated by variables such
that every other ideal in the chain linking $I$ to $I'$ is a
squarefree monomial ideal.

Note that Proposition \ref{prop:bdl} provides: If $I$ is a
squarefree monomial ideal that can be obtained from an ideal
generated by variables by a sequence of basic double links, then $I$
is squarefree glicci, thus in particular Cohen-Macaulay.
\end{Remark}

We use basic double links to show the initial ideals we consider correspond to simplicial complexes that satisfy a strong combinatorial property: they are vertex decomposable. Recall that a \emph{simplicial complex} $\Delta$ on $n$ vertices is a collection of subsets of $\{1,\ldots,n\}$ that is closed under inclusion. The elements of $\Delta$ are called the \emph{faces} of $\Delta$. The dimension of a face $F$ is $|F| - 1$, and the \emph{dimension} of $\Delta$ is the maximum dimension of its faces. The complex $\Delta$ is said to be \emph{pure} if all its \emph{facets}, the faces that are maximal with respect to inclusion, have the same dimension.

Let $\{k\}$ be a vertex of $\Delta$, a 0-dimensional face. Then the \emph{link} of $k$ is
\[
\lk_{\Delta} (k) = \{G \in \Delta \mid \{k\} \cup G\in\Delta, \{k\} \cap
G=\emptyset\},
\]
and the \emph{deletion} with respect to $k$ is
\[
\Delta_{-k}=\{G\in\Delta\mid \{k\}\cap G=\emptyset\}.
\]
A simplicial complex $\Delta$ is \emph{vertex decomposable} if it is a
simplex, or it is the empty set, or there exists a vertex $k$ such that
$\lk_k(\Delta)$ and $\Delta_{-k}$ are both pure and vertex decomposable,
and
\[
\dim\Delta=\dim(\Delta_{-k})=\dim\lk_{\Delta} (k)+1.
\]
Vertex decomposable simplicial complexes are known to have strong structural properties. In particular, they are shellable, and thus Cohen-Macaulay.

The \emph{Stanley-Reisner ideal} associated to a simplicial complex $\Delta$ on
$n$ vertices is the squarefree monomial ideal
\[
I_{\Delta}=(x_{i_1},\ldots,x_{i_s}\mid
\{i_1,\ldots,i_s\}\not\in\Delta)\subset K[x_1,\ldots,x_n].
\]
In fact, this induces a bijection between the simplicial complexes on $n$ vertices and squarefree monomial ideals in $K[x_1,\ldots,x_n]$.
According to \cite[Theorem 3.3]{NR}, the Stanley-Reisner ideal of a vertex decomposable simplicial complex is squarefree glicci. In the main result of this section we show first that the ideals in question are squarefree glicci by describing explicitly the required basic double links, and then use this to infer the desired vertex decomposability.

\begin{Theorem}
  \label{thm:initial-ideals}
\begin{itemize}
  \item[(a)] The initial ideal
  $\ini (I_2(\bTlm)) : = \ini_{\<} (I_2(\bTlm))$ is
  squarefree and has height
\[
\hht \ini (I_2(\bTlm)) = \sum_{i=2}^n (\la_i - \mu_i - 1).
\]
Its associated simplicial complex is vertex decomposable. In particular, $\ini (I_2(\bTlm))$ is Cohen-Macaulay.

  \item[(b)]  The initial ideal
  $\ini (I_2(\bSlm)) : = \ini_{\<} (I_2(\bSlm))$ is
  squarefree and has height
\[
\hht \ini (I_2(\bSlm)) = \max \{0, n-1-\mu_1\} +  \sum_{i=2}^n (\la_i - \mu_i - 1).
\]
Its associated simplicial complex is vertex decomposable. In particular, $\ini (I_2(\bSlm))$ is Cohen-Macaulay.

\end{itemize}
Moreover, if either ideal is non-trivial, then it can be
obtained from an ideal generated by variables using suitable basic
double links. In particular, it is squarefree glicci.
\end{Theorem}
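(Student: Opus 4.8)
The plan is to establish the entire statement by one induction that parallels the double induction (on the number of rows $n$, then on the number of boxes in the last row) used for Theorem~\ref{thm:Minors-Grob}. Throughout I fix the corner variable $v := T_{n,\la_n}$ and let $\bSwlm$, $\bTwlm$ be the tableaux obtained by deleting that box, exactly as in the proof of Theorem~\ref{thm:Minors-Grob}; when $\mu_n = \la_n - 1$ this deletion empties the last row and the induction drops to $n-1$ rows. First, squarefreeness is immediate: since $\prec$ is a diagonal term order, the leading term of each $2$-minor is the product of its two main-diagonal entries, a squarefree quadratic monomial, and by Theorem~\ref{thm:Minors-Grob} these generate $\ini(I_2(\bTlm))$ and $\ini(I_2(\bSlm))$. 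Thus each initial ideal is the Stanley--Reisner ideal $I_\Delta$ of a flag complex $\Delta$, and everything else is a statement about $\Delta$ and about $\hht I_\Delta$.

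The heart of the argument is the identity
\[
  \ini(I_2(\bSlm)) \;=\; v \cdot I_{\lk_\Delta(v)} \;+\; \ini(I_2(\bSwlm)),
\]
which I would read as a basic double link in the sense of Proposition~\ref{prop:bdl}, with $I := I_{\lk_\Delta(v)}$, $J := \ini(I_2(\bSwlm))$ and $f := v$. This is the standard Stanley--Reisner decomposition $I_\Delta = I_{\Delta_{-v}} + v\, I_{\lk_\Delta(v)}$ once one identifies $\Delta_{-v}$ with the complex of $\ini(I_2(\bSwlm))$: a leading term of a minor of $\bSlm$ fails to be divisible by $v$ precisely when the minor already lies in $I_2(\bSwlm)$, which is the observation made inside the proof of Theorem~\ref{thm:Minors-Grob}. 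Writing $L$ for the set of variables $T_{ac}$ with $T_{ac}\,v$ a minimal generator of $\ini(I_2(\bSlm))$, a short computation gives $I_{\lk_\Delta(v)} = (L) + \ini(I_2(\bSwlm))$; equivalently $\lk_\Delta(v)$ is the successive deletion from $\Delta_{-v}$ of the vertices in $L$. I would then check the hypotheses of Proposition~\ref{prop:bdl}: $J \subseteq I$ because $\lk_\Delta(v) \subseteq \Delta_{-v}$ always holds; $J : v = J$ because $v$ divides no generator of $J$; $R/J$ is Cohen--Macaulay by the inductive hypothesis; and $R/I$ is Cohen--Macaulay because $\lk_\Delta(v)$, being a deletion of the (inductively) vertex-decomposable complex $\Delta_{-v}$, is again vertex decomposable, hence Cohen--Macaulay.

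Granting the height relation $\hht J = \hht I - 1$ (the one nontrivial hypothesis, discussed below), Proposition~\ref{prop:bdl} yields that $R/\ini(I_2(\bSlm))$ is Cohen--Macaulay with $\hht\ini(I_2(\bSlm)) = \hht I = \hht J + 1$, and its Hilbert-function relation drives the height recursion. Vertex decomposability of $\Delta$ is then immediate from the definition: $\Delta_{-v}$ is pure and vertex decomposable by induction, and $\lk_\Delta(v) = (\Delta_{-v})_{-L}$ is vertex decomposable (hence pure) as a deletion of $\Delta_{-v}$, while the required chain $\dim\Delta = \dim\Delta_{-v} = \dim\lk_\Delta(v) + 1$ is exactly the numerical content of $\hht J = \hht I - 1$ together with $\hht\ini(I_2(\bSlm)) = \hht I$. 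Reading the recursion from the bottom, each nontrivial initial ideal is built from an ideal generated by variables by a chain of basic double links, so by Remark~\ref{rem:bdl} it is squarefree glicci (when $L = \emptyset$ the two initial ideals coincide and the conclusion simply transfers). Summing the recursion gives the closed forms: part (a), for $\bTlm$, is the simpler sub-case in which no reflected entries intervene and each row $i \ge 2$ contributes all but its leftmost box, that is $\la_i - \mu_i - 1$ of them, yielding $\sum_{i=2}^n(\la_i - \mu_i - 1)$; for $\bSlm$ the additional diagonal-crossing minors created by the symmetrized rows contribute the extra term $\max\{0, n-1-\mu_1\}$.

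The main obstacle is precisely the bookkeeping that certifies a \emph{genuine} basic double link at each step, i.e.\ that adjoining the corner $v$ raises the height by exactly one whenever $L \ne \emptyset$, that is, $\hht\big((L) + \ini(I_2(\bSwlm))\big) = \hht\ini(I_2(\bSwlm)) + 1$. Pinning down $L$ and verifying this requires the same position-by-position case analysis relative to the main diagonal --- in particular whether $v$ acquires a partner through a reflected entry --- that organizes the proof of Theorem~\ref{thm:Minors-Grob}, and it is here that the summand $\max\{0,n-1-\mu_1\}$ in part (b) is produced and matched to the closed formula. In part (a) the reflected (``lower-half'') configurations of Case III never arise, which is why that term is absent. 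Once the unit height increment, the purity of the links, and the dimension chain are confirmed in every case, Proposition~\ref{prop:bdl} and the definition of vertex decomposability assemble into the full statement.
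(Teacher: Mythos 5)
Your skeleton does match the paper's: both arguments induct by deleting the corner box $T_{n,\la_n}$, write $\ini (I_2(\bSlm)) = T_{n,\la_n}\cdot I + \ini (I_2(\bSwlm))$ with $I = (L) + \ini (I_2(\bSwlm))$, and feed this into Proposition~\ref{prop:bdl}. But there is a genuine gap at the step where you certify the hypotheses of that proposition for $I$: you assert that $\lk_{\Delta}(v) = (\Delta_{-v})_{-L}$ is vertex decomposable (hence pure and Cohen--Macaulay) ``as a deletion of $\Delta_{-v}$.'' Deleting vertices does \emph{not} preserve vertex decomposability, purity, or Cohen--Macaulayness in general: the path with facets $\{a,b\},\{b,c\},\{c,d\}$ is a vertex decomposable flag complex, yet deleting $b$ leaves the non-pure, non-Cohen--Macaulay complex with facets $\{a\}$ and $\{c,d\}$. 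So the Cohen--Macaulayness of $R/I$ --- which Proposition~\ref{prop:bdl} explicitly requires --- and the vertex decomposability and purity of the link are exactly the claims that still need proof; your induction hypothesis, which concerns $\Delta_{-v}$ (i.e.\ $\ini (I_2(\bSwlm))$), does not deliver them for free.

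The paper closes precisely this hole by a structural identification rather than a general principle: it shows that the link ideal $\fa' = \fa + \ini (I_2(\bTwlm))$ (resp.\ $\fb' = \fb + \ini (I_2(\bSwlm))$) is, up to adding an ideal generated by new variables, the extension of $\ini (I_2(\bT_{\la' - \mu'}))$ for a \emph{smaller tableau of the same family}, with $\la_i' = \la_i - (\la_n - \mu_n - 1)$ and, in case (b), $\mu_i' = \max\{n-1, \mu_i\}$. This lets the induction hypothesis on the number of rows apply directly to the link, yielding simultaneously its Cohen--Macaulayness, its vertex decomposability, and its exact height --- the computation that also produces the closed formulas and the $\max\{0, n-1-\mu_1\}$ term. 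The case analysis you defer as ``bookkeeping needed only for the height'' (the paper's Cases 1--3 determining $\fb$, and the rewritings $\fb = \fa + \fb''$) is in fact what establishes this identification: it is needed to pin down $L$ and to recognize $(L) + \ini (I_2(\bSwlm))$ as a smaller instance of the same problem. Without it, neither the unit height increment nor the Cohen--Macaulayness of $R/I$ is available, and the basic double link machine cannot start; with it, your remaining steps (the decomposition identity, the dimension chain, and the glicci conclusion via Remark~\ref{rem:bdl}) go through as in the paper.
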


\begin{proof}
In both cases we use induction on the number $n$ of rows of
$\bTlm$. If $n =1$, then $I_2(\bTlm)$ and $I_2(\bSlm)$ are trivial
and there is nothing to show.

Let $n \ge 2$. Now we use induction on $\la_n - \mu_n \ge 1$. We define a
new partition $\widetilde{\la}$ differing from $\la$ only in its
last part by
\[
\widetilde{\la} := (\la_1,\ldots,\la_{n-1}, \la_n - 1).
\]
Thus, the tableaux ${\bTwlm}$ is obtained from $\bTlm$ by deleting
the right-most box in its last row. It follows that
\[
\ini (I_2(\bTwlm))  \subset \ini (I_2(\bTlm))
\]
and
\[
\ini (I_2(\bSwlm))  \subset \ini (I_2(\bSlm)).
\]
We first determine how much larger the ideals on the right-hand side
are. We  treat the two cases separately.
\smallskip

(a)  Observe if
$\la_n - \mu_n = 1$, that is, the last row of $\bTlm$ consists of
precisely one box, then deleting this box gives a tableaux leading
to the same ideals as the given ones. Thus, we conclude by induction
on the number of rows. Now assume $\la_n - \mu_n \ge 2$. 

Theorem \ref{thm:Minors-Grob}(a) provides that
\begin{equation}
  \label{eq:bdl-ladder prep}
 \ini (I_2(\bTlm)) = T_{n, \la_n} \fa + \ini (I_2(\bTwlm)),
\end{equation}
where
\[
\fa = (T_{i j} \s 1 \le i < n \text{ and } \mu_n < j < \la_n).
\]
Using induction, it follows that $\ini (I_2(\bTlm))$ is a squarefree monomial ideal.

Now note that we can rewrite Equation \eqref{eq:bdl-ladder prep} as
\begin{equation}
  \label{eq:bdl-ladder}
 \ini (I_2(\bTlm)) = T_{n, \la_n} \fa' + \ini (I_2(\bTwlm)),
\end{equation}
where
\begin{equation}
  \label{eq:linked-ideal-ladder}
\fa' = \fa + \ini (I_2(\bTwlm)).
\end{equation}
Set now
\[
\la' := (\la_1 - (\la_n - \mu_n -1),\ldots,\la_{n-1} - (\la_n - \mu_n -1)) \in \ZZ^{n-1}
\]
and
\[
\mu' := (\mu_1,\ldots,\mu_{n-1}) \in \ZZ^{n-1}.
\]
Then the ideal $\fa'$ is isomorphic to (the extension ideal  in
$K[\bTlm]$) of the sum of $\ini (I_2 (\bT_{\la' - \mu'}))$ and an
ideal generated by $\hht \fa$ new variables. Applying the induction
hypothesis to $\ini (I_2 (\bT_{\la' - \mu'}))$ we conclude that
$\fa'$ is a Cohen-Macaulay ideal of height
\begin{eqnarray*}
\hht \fa' & = & \hht \fa + \hht \ini (I_2 (\bT_{\la' - \mu'})) \\
          & = & (n-1) (\la_n - \mu_n -1) + \sum_{i=2}^{n-1} (\la_i - (\la_n - \mu_n - 1) - \mu_i - 1) \\
          & = & \sum_{i = 2}^{n} (\la_i - \mu_i -1) \\
          & = & 1 + \hht \ini (I_2 (\bTwlm)).
\end{eqnarray*}
Hence $\ini (I_2 (\bTlm))$ is a basic double link of $\fa'$, and   Proposition \ref{prop:bdl} shows that $\ini (I_2(\bTlm)) $ has the claimed height.

Denote by $\Delta$ the simplicial complex corresponding to $\ini (I_2(\bTlm))$.
By induction hypothesis, the simplicial complex of $\ini (I_2(\bTwlm))$ is vertex decomposable. Thus, this is also true for the simplicial complex corresponding to the ideal $\fa'$. Hence Equation \eqref{eq:bdl-ladder} shows that the link $\lk_{\Delta} (n,\la_n)$ and the deletion $\Delta_{-(n, \la_n)}$  with respect to the vertex $(n,\la_n) \in \Delta$ are vertex decomposable, and hence so is $\Delta$.
\smallskip

(b) We employ the same strategy as for (a), though carrying it out is more involved. Theorem \ref{thm:Minors-Grob}(b) implies that
\begin{equation}
  \label{eq:pre-symm-bdl}
\ini (I_2(\bSlm)) = T_{n, \la_n} \fb + \ini (I_2(\bSwlm)),
\end{equation}
where $\fb$ is an ideal that is generated by variables. We now determine
this ideal $\fb$. To this end, one must  list all the 2-minors
of $\bSlm$ such that $T_{n, \la_n}$ is an entry on the main
diagonal. By symmetry of $\bSlm$ we may assume that $T_{n, \la_n}$
is in $\bTlm$. Thus, we are looking for $2 \times 2$ submatrices of
$\bSlm$ that are formed by rows $i$ and $n$, where $i < n$, and
columns $j$ and $\la_n$ of $\bSlm$, where $j < \la_n$. We
distinguish three cases.
\smallskip

{\it Case 1}: Assume position $(n, j)$ is in $\bTlm$. This is true
if  and only if $T_{i j}$ is in the ideal $\fa$.

{\it Case 2}: Assume that position $(i, j)$ is in $\bTlm$, but that
position $(n, j)$ is not in $\bTlm$. The first condition means
$\mu_i < j < \la_n$, whereas the second condition gives $j \le
\mu_n$. Furthermore,  since position $(n, j)$ is in $\bSlm$, by
symmetry the second condition implies that position $(j, n)$ is in
$\bTlm$, that is, $j \le n$ and $\mu_j < n$. Notice that the
condition $n < \la_j$ is always satisfied because our assumptions
provide $n-1 \le \mu_n < \la _n \le \la_j$. Since, by assumption
$n-1 \le \mu_n < \la_n$ and  the condition $j-1 \le \mu_j < n$
implies $j \le n$, we see that Case 2 occurs if and only if $\mu_i <
j \le \min \{n, \mu_n\}$ and $\mu_j < n$.

{\it Case 3}: Assume that positions $(i, j)$ and $(n, j)$ are  not
in $\bTlm$, that is, $j \le \mu_i$ and the positions $(j, i)$ and
$(j, n)$ are in $\bTlm$.  The latter is equivalent to $j \le n$ and
$\mu_j < i < n$, using again that $n < \la_j$. Notice that here we
have $i \ge j$, thus the variable at position $(i, j)$ is $T_{j i}$.
\smallskip

These considerations show that we can write
\[
\fb = \fa + \fb_1 + \fb_2,
\]
where the ideal
\[
\fb_1 = (T_{i j } \s \mu_j < n \text{ and } \mu_i <
j \le \min \{n, \mu_n\})
\]
corresponds to Case 2 and
\[
\fb_2 = (T_{i j } \s i \le \mu_j \text{ and }
 \mu_i < j < n)
\]
corresponds to Case 3. Using induction and Equation \eqref{eq:pre-symm-bdl}, it follows that $\ini (I_2 (\bSlm))$ is a squarefree monomial ideal.

Next, we claim that $\fb_2$ can be rewritten as
\begin{equation}
  \label{eq:b2}
\fb_2  = (T_{i j} \s \max \{i, \mu_i \} < j < n \}.
\end{equation}

Indeed, the right-hand side is contained in $\fb_2$ because $i < j$
implies $i \le j-1 \le \mu_j$.

Conversely, if $T_{i j} \in \fb_2$, then $i-1 \le \mu_i < j$. Assume
$i = j$. Then, we get $\mu_j = \mu_i = i-1$, a contradiction to $i <
\mu_j$. Thus, $T_{i j}$ is in the right-hand side of Equation
\eqref{eq:b2}, which establishes said equation.

Now we are ready to rewrite the ideal $\fb$ as
\begin{equation}
  \label{eq:min-gen-b}
\fb = \fa + \fb_1 + \fb_2 =  \fa + \fb'',
\end{equation}
where
\[
\fb'' := (T_{i j} \s 1 \le i < n \text{ and } \mu_i < j < n).
\]

Indeed, clearly $\fb_2$ is contained in $\fb''$. Assume  there is
some $T_{i j} \in \fb_1 \setminus \fb''$. This provides $n = j \le
\mu_n$, a contradiction to $\mu_j < n$.

Conversely, assume there is some $T_{i j} \in \fb'' \setminus \fb$.This condition together with 
 $\fb_2 \subset \fb$ forces $i= j > \mu_i \ge i-1$. 
This implies $\mu_j = j-1 < n$ and $\mu_i = i-1 < j \le n-1 \le
\mu_n$. Thus $T_{i j}$ is in $\fb_1 \subset \fb$, a contradiction.

Using Equation \eqref{eq:min-gen-b} we conclude that
\[
\ini (I_2(\bSlm)) = \left (T_{i j} T_{k l} \; \bigg | \; \begin{array}{l}
1 \le i < k \le n, \; 1 \le j < l \le \la_k, \\  \text{and }  
\mu_k < j \text{ or } \mu_i < j < k
\end{array}
\right ).
\]
It follows that we can rewrite Equation \eqref{eq:pre-symm-bdl} as
\begin{equation}
  \label{eq:symm-bdl}
\ini (I_2(\bSlm)) = T_{n, \la_n} \fb' + \ini (I_2(\bSwlm)),
\end{equation}
where
\begin{eqnarray*}
\fb' & := & \fb + \ini (I_2(\bSwlm)) \\
& = & \fb + \left (T_{i j} T_{k l} \; \bigg | \; \begin{array}{l}
1 \le i < k < n, \; 1 \le j < l \le \la_k,  \text{ and }  \\
\mu_k < j \text{ or } \mu_i < j < k,  \text{ and } \\
n \le j < l \le \mu_n \text{ or } \la_n \le j \text{ or }
    [n \le j \le \mu_n \text{ and } \la_n \le l]

\end{array}
\right ).
\end{eqnarray*}
Set now
\begin{equation*}
\la' := (\la_1 - (\la_n - \mu_n -1),\ldots,\la_{n-1} - (\la_n - \mu_n -1)) \in \ZZ^{n-1}
\end{equation*}
and
\begin{equation*}
\mu' := (\mu_1',\ldots,\mu_{n-1}') \in \ZZ^{n-1}, \; \text{ where }
\mu_i' := \max \{n-1, \mu_i\}.
\end{equation*}
Then it follows that the ideal $\fb'$ is isomorphic to (the
extension ideal in $K[\bTlm]$) of the sum of $\ini (I_2(\bT_{\la' -
\mu'}))$ and an ideal generated by $\hht \fb$ variables. Hence, by
Part (a), we obtain that $\fb'$ is Cohen-Macaulay and has height
\begin{eqnarray*}
\hht \fb' & = & \hht \fb + \hht \ini (I_2(\bT_{\la' - \mu'})) \\
& = & (n-1) (\la_n - \mu_n -1) + \sum_{i=1}^{n-1} \max \{0, n-1-\mu_i\} + \\
& & \text{ } \sum_{i = 2}^{n-1} \left (\la_i - (\la_n - \mu_n -1) - \max \{n-1, \mu_i\} - 1 \right ) \\
& = & \la_n - \mu_n -1 +  \sum_{i=1}^{n-1} \max \{0, n-1-\mu_i\} +
 \sum_{i = 2}^{n-1} \left (\la_i -   \max \{n-1, \mu_i\} - 1 \right ) \\
 & = & \max \{0, n-1-\mu_1\} + \sum_{i = 2}^n (\la_i - \mu_i -1) \\
 & = & 1 + \hht \ini (I_2(\bSwlm)),
\end{eqnarray*}
where we used the observation that
\[
\max \{0, n-1 - \mu_i\} - \max \{n-1, \mu_i \} = - \mu_i.
\]
We conclude that $\ini (I_2 (\bSlm))$ is a basic double link of
$\fb'$, and   Proposition \ref{prop:bdl} shows that $\ini (I_2(\bSlm)) $ has the claimed height. Here we abuse our notation if $\lambda_n - \mu_n = 1$. Then row $n$ of  the tableaux $\bT_{\tilde{\lambda} - \mu}$ is empty. Thus, the ideal $I_2 (\bS_{\tilde{\lambda} - \mu})$ has the claimed properties by induction on $n$. 

Finally, denote by $\Delta$ the simplicial complex corresponding to $\ini (I_2(\bSlm))$. Equation \eqref{eq:symm-bdl} provides that the simplicial complexes corresponding to $\fb'$ and $\ini (I_2(\bSwlm))$ are the link $\lk_{\Delta} ({n, \la_n})$ and the deletion $\Delta_{- (n, \la_n)}$, respectively. They are both vertex decomposable by the induction hypothesis, and hence so is $\Delta$.
\end{proof}

\begin{Corollary}
  \label{cor:CM}
The rings $K[\bTlm]/I_2 (\bTlm)$ and $K[\bTlm]/I_2 (\bSlm)$ are  Cohen-Macaulay.
\end{Corollary}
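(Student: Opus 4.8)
The plan is to deduce the Cohen-Macaulayness of each quotient ring directly from the corresponding statement for its initial ideal, which Theorem~\ref{thm:initial-ideals} has already supplied. The mechanism is the standard comparison between a homogeneous ideal and its initial ideal under a Gr\"obner degeneration. Writing $R = K[\bTlm]$ and letting $I$ be a homogeneous ideal with term order $\prec$, the quotients $R/I$ and $R/\ini_{\prec}(I)$ have the same Hilbert function, so in particular $\dim R/I = \dim R/\ini_{\prec}(I)$. Moreover, passing from the initial ideal to $I$ can only decrease the graded Betti numbers, $\beta_{ij}(R/I) \le \beta_{ij}(R/\ini_{\prec}(I))$; hence $\operatorname{pd}(R/I) \le \operatorname{pd}(R/\ini_{\prec}(I))$, and the Auslander--Buchsbaum formula yields $\operatorname{depth}(R/I) \ge \operatorname{depth}(R/\ini_{\prec}(I))$.

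I would then apply this to $I = I_2(\bTlm)$ and to $I = I_2(\bSlm)$. By Theorem~\ref{thm:initial-ideals}, in each case the initial ideal is the Stanley--Reisner ideal of a vertex decomposable simplicial complex, hence Cohen-Macaulay, so $\operatorname{depth} R/\ini_{\prec}(I) = \dim R/\ini_{\prec}(I)$. Chaining the two facts above gives
\[
\dim R/\ini_{\prec}(I) = \operatorname{depth} R/\ini_{\prec}(I) \le \operatorname{depth} R/I \le \dim R/I = \dim R/\ini_{\prec}(I),
\]
so every inequality is an equality and $\operatorname{depth} R/I = \dim R/I$. This establishes that both $K[\bTlm]/I_2(\bTlm)$ and $K[\bTlm]/I_2(\bSlm)$ are Cohen-Macaulay.

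There is no genuine obstacle here: all of the difficulty has been absorbed into Theorem~\ref{thm:initial-ideals}, and the corollary is a formal consequence of the principle that Cohen-Macaulayness ascends from an initial ideal to the ideal degenerating to it. The only point demanding care is the direction of the depth comparison---depth can only grow, never shrink, under a Gr\"obner degeneration---which I would simply cite from a standard reference on initial ideals and Betti numbers.
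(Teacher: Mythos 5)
Your proof is correct and follows exactly the same route as the paper: the paper's proof of this corollary is a one-line citation of Theorem~\ref{thm:initial-ideals}, implicitly invoking the standard fact that Cohen-Macaulayness ascends from the initial ideal to the ideal itself, which is precisely the semicontinuity-of-Betti-numbers argument you spell out. Your write-up simply makes explicit what the paper leaves to the reader.
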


\begin{proof}
This follows from the corresponding result for the initial ideals in Theorem \ref{thm:initial-ideals}.
\end{proof}

We use the previous theorem and a well-known localization technique (see, e.g., \cite[Lemma 7.3.3]{BH}) to establish the following: 

\begin{Proposition}
  \label{prop:primality}
The ideals   $I_2 (\bTlm)$ and $I_2 (\bSlm)$ are prime ideals in $K[\bTlm]$.
\end{Proposition}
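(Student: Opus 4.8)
The plan is to prove primality by a localization at a corner variable, reducing to a strictly smaller determinantal ring of the same family, and then passing from the localization back to the ring itself. I will treat $I_2(\bSlm)$; the argument for $I_2(\bTlm)$ is the same and in fact simpler, since $\bTlm$ is one-sided. We may assume the ideal is nontrivial (the trivial case gives the zero ideal of a polynomial ring, which is prime). Write $A := K[\bTlm]/I_2(\bSlm)$, set $m := \la_1$, and let $x := T_{1,m}$ be the top-right corner variable of $\bTlm$, which is present since $\mu_1 < \la_1 = m$.

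First I would show that $x$ is a nonzerodivisor on $A$, and the key is that $x$ is a nonzerodivisor modulo the initial ideal. By the explicit description of $\ini(I_2(\bSlm))$ obtained in Theorem \ref{thm:initial-ideals}(b), the variable $T_{1,m}$ divides no minimal generator: no cell of $\bSlm$ lies strictly south-east or strictly north-west of the corner $(1,m)$, since its row index is smallest and its column index $m = \la_1$ is largest. Hence $(1,m)$ is a cone point of the associated vertex decomposable complex, so it lies in every facet, and therefore $x$ is a nonzerodivisor on $K[\bTlm]/\ini(I_2(\bSlm))$; equivalently $\ini(I_2(\bSlm)) : x = \ini(I_2(\bSlm))$. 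Since $x$ is a variable, $\ini(I_2(\bSlm) : x) \subseteq \ini(I_2(\bSlm)) : x = \ini(I_2(\bSlm))$, and combined with the inclusion $I_2(\bSlm) \subseteq I_2(\bSlm) : x$ and a comparison of Hilbert functions (both ideals are homogeneous with the same initial ideal) this forces $I_2(\bSlm) : x = I_2(\bSlm)$. Thus $x$ is a nonzerodivisor on $A$, and in particular the localization map $A \to A_x$ is injective.

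Next I would analyze $A_x$ by the standard corner localization of determinantal ideals recorded in \cite[Lemma 7.3.3]{BH}, arguing by induction on the number of boxes $\sum_i(\la_i - \mu_i)$ of the tableau, with the trivial ideal as base case. Upon inverting $x = T_{1,m}$, each $2$-minor on rows $1, i$ and columns $j, m$ gives the relation $T_{i,j} = x^{-1} T_{1,j} T_{i,m}$, which eliminates the interior variable $T_{i,j}$ in terms of the border variables in row $1$ and column $m$. After performing all such eliminations, the surviving minors involve only the remaining variables and generate the $2$-minor ideal $I_2(\bS_{\la'-\mu'})$ of a strictly smaller symmetric tableau with holes; thus $A_x$ is isomorphic to a localization of a polynomial ring over $K[\bT_{\la'-\mu'}]/I_2(\bS_{\la'-\mu'})$, which is a domain by the induction hypothesis. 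Hence $A_x$ is a domain, and since $A$ embeds into $A_x$, the ring $A$ is a domain and $I_2(\bSlm)$ is prime.

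The main obstacle is the bookkeeping in this last step: because $\bSlm$ need not be a symmetric ladder and may have holes in the middle, one cannot blindly solve every interior entry from the corner, since the border entry $T_{i,m}$ or $T_{1,j}$ needed for an elimination may be absent. One must determine precisely which interior variables are solvable, check that the substitutions are consistent on the minors mixing solved and unsolved entries, and confirm that the residual relations are exactly the $2$-minors of a smaller tableau in the same family $\bS_{\la'-\mu'}$, so that the induction applies. This is exactly the combinatorial content that the cited localization technique is designed to handle, and it is where the hole structure must be carefully controlled.
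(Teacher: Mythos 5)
Your first step is correct, and it is a genuinely cleaner route to injectivity of $A \to A_x$ than anything in the paper: since $\<$ is a diagonal term order and the corner variable $T_{1,m}$ (sitting at positions $(1,m)$ and $(m,1)$ of the symmetric array) can never be a main-diagonal entry of a $2\times 2$ submatrix of $\bSlm$, it divides no generator of $\ini(I_2(\bSlm))$ by Theorem~\ref{thm:Minors-Grob}(b), hence is a nonzerodivisor modulo the initial ideal and therefore modulo $I_2(\bSlm)$. The gap is in your second step, and it is not ``bookkeeping'': for your choice of corner the residual ideal after localization is simply \emph{not} of the form $I_2(\bS_{\la'-\mu'})$, so your induction has nothing to apply to. Take the paper's own running example $\la=(5,5,4)$, $\mu=(1,3,3)$ (Example~\ref{ex-symmetrizd tabeaux contd}), where
\[
I_2(\bSlm) = (T_{14}T_{25}-T_{15}T_{24},\; T_{12}T_{34}-T_{13}T_{24}).
\]
The only $2$-minor of $\bSlm$ involving $x=T_{15}$ is the first one: solving the other interior entry $T_{34}$ from the corner $(1,5)$ would require the cell $(3,5)$, which is absent. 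Inverting $x$ and eliminating $T_{24}=x^{-1}T_{14}T_{25}$ turns the second generator into a cubic binomial, giving
\[
A_x \cong K[T_{12},T_{13},T_{14},T_{25},T_{34}][T_{15}^{\pm 1}]\big/\bigl(T_{12}T_{34}T_{15}-T_{13}T_{14}T_{25}\bigr).
\]
This happens to be a domain (the cubic is irreducible), but its defining ideal is not the $2$-minor ideal of any smaller symmetric tableau with holes, so the induction hypothesis does not cover it; in general the residual ideals produced by your corner are non-determinantal and their primality is a fresh problem of essentially the same difficulty as the original one.

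The paper's proof shows exactly how the corner must be chosen: it localizes at the \emph{opposite} corner $T_{n,\la_n}$ (here $T_{34}$), where the explicit automorphism of $K[\bTlm][T_{n,\la_n}^{-1}]$ sending $T_{ij}\mapsto T_{ij}+T_{nj}T_{i,\la_n}T_{n,\la_n}^{-1}$ carries $I_2(\bSlm)$ to $\fb + I_2(\bT_{\la'-\mu'})$, with $\fb$ generated by variables and $\bT_{\la'-\mu'}$ a smaller (non-symmetric) tableau, so induction does go through. The price of that choice is that your nonzerodivisor argument is unavailable there: $T_{n,\la_n}$ \emph{does} divide leading terms of minors (e.g.\ $\ini(T_{12}T_{34}-T_{13}T_{24})=T_{12}T_{34}$ above), which is precisely why the paper does not prove injectivity of $A\to A_x$ at all, but instead invokes Cohen-Macaulayness of $A$ (Corollary~\ref{cor:CM}) to force all associated primes to have equal height, and then rules out an associated prime containing $T_{n,\la_n}$ by a height count based on Theorem~\ref{thm:initial-ideals}. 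To repair your proof you would either have to redo the elimination combinatorics at the corner $T_{n,\la_n}$ and replace your step~1 by such an unmixedness argument, or prove primality of the genuinely non-determinantal residual rings that the corner $T_{1,m}$ produces.
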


\begin{proof}
We use again induction based on obtaining $\bTlm$ by adding a new right-most box in the last row of a smaller tableau. So set
\[
\widetilde{\la} := (\la_1,\ldots,\la_{n-1}, \la_n - 1).
\]
By induction, we may assume that $I_2 (\bTwlm)$ and $I_2 (\bSwlm)$ are prime. Since the proof for $I_2 (\bTlm)$ is similar, but easier, we only provide the arguments that $I_2 (\bSlm)$ is a prime ideal.

Consider the $K$-algebra homomorphism $\ffi: K[\bTlm][T_{n, \la_n}^{-1}] \to K[\bTlm][T_{n, \la_n}^{-1}]$ defined by
\[
T_{i j} \mapsto \begin{cases}
  T_{i, j} + T_{n, j} T_{i, \la_n} T_{n, \la_n}^{-1}  & \text{if } (i, j) \neq (n, \la_n) \\
 T_{i, j} & \text{otherwise}.
\end{cases}
\]
In fact, $\ffi$ is an isomorphism whose inverse map is $\psi: K[\bTlm][T_{n, \la_n}^{-1}] \to K[\bTlm][T_{n, \la_n}^{-1}]$ defined by
\[
T_{i j} \mapsto \begin{cases}
  T_{i, j} - T_{n, j} T_{i, \la_n} T_{n, \la_n}^{-1}  & \text{if } (i, j) \neq (n, \la_n) \\
 T_{i, j} & \text{otherwise}.
\end{cases}
\]
Notice that $\ffi$ maps the extension of $I_2 (\bSlm)$ to the extension of $\fb + I_2(\bT_{\la' - \mu'})$, where
\begin{equation*}
\la' := (\la_1 - (\la_n - \mu_n -1),\ldots,\la_{n-1} - (\la_n - \mu_n -1)) \in \ZZ^{n-1}
\end{equation*}
and
\begin{equation*}
\mu' := (\mu_1',\ldots,\mu_{n-1}') \in \ZZ^{n-1}, \quad
\mu_i' := \max \{n-1, \mu_i\},
\end{equation*}
and
$\fb$ is the ideal used in the proof of Theorem \ref{thm:initial-ideals}:
\[
\fb = (T_{i, j} \s 1 \le i < n \text{ and } [ \mu_i < j < n \text{ or } \mu_n < j < \la_n] ).
\]
It follows that we get  isomorphisms
\begin{align*}
  (K[\bTlm]/I_2 (\bSlm))[t_{n, \la_n}^{-1}] & \cong (K[\bTlm]/(\fb + I_2(\bT_{\la' - \mu'})) )[T_{n, \la_n}^{-1}]\\
   & \cong (K[\bT_{\la' - \mu'}, T_{n, \la_n}]/I_2(\bT_{\la' - \mu'}))[T_{n, \la_n}^{-1}],
\end{align*}
where $t_{n, \la_n}^{-1}$ denotes the residue class of $T_{n, \la_n}^{-1}$ in
$A := K[\bTlm]/I_2 (\bSlm)$.

Assume that $A$ is not a domain. Then $I_2 (\bSlm) \neq I_2 (\bSwlm)$, and $A$ has an associated prime ideal that contains  $t_{n, \la_n}$ because the above isomorphisms show that $A[t_{n, \la_n}^{-1}]$ is a domain. Since $A$ is Cohen-Macaulay by Corollary \ref{cor:CM}, all its associated prime ideals have the same height. It follows that the ideals $I_2 (\bSlm)$ and $J := (I_2 (\bSlm), T_{n, \la_n})$ have the same height in $K[\bTlm]$.

Notice that $I_2 (\bSlm) \neq I_2 (\bSwlm)$ implies the existence of a quadratic binomial $f \in I_2 (\bSlm)  \setminus  I_2 (\bSwlm)$ such that $(f, T_{n, \la_n}) = (T_{i, j} T_{k, l}, T_{n, \la_n})$, where $T_{i, j} T_{k, l} \in K[\bTwlm]$. 
By the induction hypothesis, $I_2 (\bSwlm)$ is a prime  ideal generated by quadrics. We conclude that
\[
\hht (I_2 (\bSwlm), T_{i, j} T_{k, l}) = 1 +  \hht I_2 (\bSwlm).
\]
Using Theorem \ref{thm:initial-ideals}, we obtain
\[
\hht (I_2 (\bSwlm), T_{i, j} T_{k, l}) = \hht I_2 (\bSlm).
\]
The ideal on the left-hand side is generated by polynomials in $K[\bTwlm]$. Hence we get \[
\hht J \geq \hht (I_2 (\bSwlm), T_{i, j} T_{k, l}, T_{n, \la_n}) > \hht I_2 (\bSlm).
\]
However, this contradicts the conclusion of the previous paragraph. Hence $A$ is a domain.
\end{proof}

Our results can be partially summarized as follows:

\begin{Corollary}
  \label{cor:primeness}
The rings $K[\bTlm]/I_2 (\bTlm)$ and $K[\bTlm]/I_2 (\bSlm)$ are normal Cohen-Macaulay domains that are Koszul.
\end{Corollary}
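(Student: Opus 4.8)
The plan is to establish the four asserted properties of the rings $K[\bTlm]/I_2 (\bTlm)$ and $K[\bTlm]/I_2 (\bSlm)$ by combining the structural results already proved in this section, since each property either follows directly or descends from the corresponding property of a suitable initial ideal. Cohen-Macaulayness is already recorded in Corollary \ref{cor:CM}, and primeness (hence that each quotient is a domain) is exactly Proposition \ref{prop:primality}; so the genuinely new content to verify is \emph{normality} and \emph{Koszulness}. I would organize the proof around the observation that a diagonal Gr\"obner basis, which we have by Theorem \ref{thm:Minors-Grob}, forces the initial ideal to be a squarefree monomial ideal (Theorem \ref{thm:initial-ideals}), and squarefreeness of the initial ideal is the key hypothesis that unlocks both remaining properties.

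For \textbf{normality}, the standard route is to pass to the initial ideal. Since $\ini_{\<} (I_2(\bTlm))$ and $\ini_{\<} (I_2(\bSlm))$ are squarefree monomial ideals by Theorem \ref{thm:initial-ideals}, the corresponding initial algebras are Stanley-Reisner rings, which are reduced, and in fact the squarefree property together with Cohen-Macaulayness implies the initial algebra satisfies Serre's condition $(R_1)$ and $(S_2)$. One then invokes the fact that these properties deform well under Gr\"obner degeneration: if $K[\bTlm]/\ini (I)$ is normal (equivalently satisfies $(R_1)+(S_2)$), then so is $K[\bTlm]/I$, because $(S_2)$ is preserved and $(R_1)$ can be checked via the flat degeneration. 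Since we already know the quotients are Cohen-Macaulay domains of the expected dimension, it suffices to verify $(R_1)$; for determinantal-type rings with a squarefree diagonal initial ideal this is the usual argument, and one may alternatively cite a general normality criterion for algebras with squarefree initial ideal.

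For \textbf{Koszulness}, I would again exploit the diagonal Gr\"obner basis: the defining ideals $I_2(\bTlm)$ and $I_2(\bSlm)$ are generated by quadrics (the $2$-minors), and they possess a quadratic Gr\"obner basis by Theorem \ref{thm:Minors-Grob}. A quadratic Gr\"obner basis immediately implies the algebra is $G$-quadratic, and $G$-quadratic algebras are Koszul; this is a standard implication (see, e.g., the theory of Koszul algebras via quadratic Gr\"obner bases). Thus Koszulness follows formally from Theorem \ref{thm:Minors-Grob} once we note that the $2$-minors are quadrics.

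The main obstacle, and the only step requiring care, is the \emph{normality} claim: Cohen-Macaulayness and primeness are already in hand, and Koszulness is a direct consequence of having a quadratic Gr\"obner basis, but normality is not automatic from squarefreeness alone and genuinely uses the interplay between the squarefree initial ideal and the domain property. The cleanest way to handle it is to observe that a Cohen-Macaulay domain whose initial ideal (under a term order) is squarefree is automatically normal, a principle that follows because the squarefree initial ideal gives a reduced and $(S_2)$ initial algebra and the regularity in codimension one lifts; I would cite the relevant normality-via-initial-ideal result and thereby conclude that both quotient rings are normal.
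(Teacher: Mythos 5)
Your handling of three of the four properties is correct and coincides with the paper: Cohen--Macaulayness is Corollary \ref{cor:CM}, the domain property is Proposition \ref{prop:primality}, and Koszulness follows from the quadratic Gr\"obner bases of Theorem \ref{thm:Minors-Grob} exactly as in the paper, which cites \cite[Theorem 2.2]{BHV} for the implication ``G-quadratic $\Rightarrow$ Koszul.''

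The normality step, however, rests on a false principle, so there is a genuine gap. It is \emph{not} true that a Cohen--Macaulay domain whose defining ideal has a squarefree initial ideal is automatically normal. A concrete counterexample: let $f = xyz - y^3 - z^3 \in K[x,y,z]$ and take the lexicographic order with $x > y > z$. The ideal $(f)$ is prime (it is linear in $x$ with coprime coefficients $yz$ and $y^3+z^3$), the ring $K[x,y,z]/(f)$ is a Cohen--Macaulay hypersurface domain, and $\ini((f)) = (xyz)$ is squarefree; yet this ring is not normal, since all partial derivatives of $f$ vanish along the line $y=z=0$, which has codimension one in the surface, so $(R_1)$ fails. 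Your intermediate assertion is also incorrect: a squarefree, Cohen--Macaulay initial ideal gives a reduced, $(S_2)$ Stanley--Reisner ring, but such rings essentially never satisfy $(R_1)$ (already $K[x,y]/(xy)$ is reduced and Cohen--Macaulay but not normal), so the true transfer theorem---normality of $R/\ini(I)$ implies normality of $R/I$---has nothing to apply to here; reducedness of the initial algebra is much weaker than its normality. What the paper uses, and what your argument omits entirely, is that $I_2(\bTlm)$ and $I_2(\bSlm)$ are \emph{prime binomial} ideals, hence toric ideals (\cite[Proposition 1.1.11]{CLS}); for toric ideals there is a special criterion, due to Sturmfels (see \cite[Proposition 13.15]{St}), asserting that a squarefree initial ideal forces the toric ring to be normal. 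The toric hypothesis cannot be dropped, as the cubic above shows, so your proof establishes the Cohen--Macaulay, domain, and Koszul claims but not the normality claim.
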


\begin{proof}
First, by the two previous results we know that the two rings are Cohen-Macaulay domains.

Second,  since the prime ideals $I_2(\T_{\lambda-\mu})$ and $ I_2(\mathbf S_{\lambda-\mu})$  are generated by binomials they are, in fact, toric ideals (see, e.g., \cite[Proposition 1.1.11]{CLS}). Observe that the initial ideals of $I_2(\T_{\lambda-\mu})$ and $ I_2(\mathbf S_{\lambda-\mu})$ provided by  Theorem~\ref{thm:Minors-Grob}(a) and (b), respectively, are squarefree. It follows that  $K[\bTlm]/I_2 (\bTlm)$ and $K[\bTlm]/I_2 (\bSlm)$ are normal.

Finally, these rings are also Koszul, as $I_2(\T_{\lambda-\mu})$ and $ I_2(\mathbf S_{\lambda-\mu})$ have Gr\"obner bases consisting of quadrics (see \cite[Theorem 2.2]{BHV}). 
\end{proof}

\begin{Remark}
Theorem 3.3 shows in particular that the initial ideal of $I_2(\mathbf S_{\lambda-\mu})$ is glicci. In fact, by a result in \cite{NRo} the ideal $I_2(\mathbf S_{\lambda-\mu})$ itself is glicci. This raises the question whether also ideals generated by minors of higher order than 2 in $\mathbf S_{\lambda-\mu}$ are glicci. Affirmative answers in some cases are established in \cite{NRo}. 
\end{Remark}


\section{Blow-up algebras}\label{sec:blowup}
\label{sec:blow-up-algebras}

We now use the results of the previous sections to elucidate the structure of blow-up algebras of specialized Ferrers ideals. Recall that, for an ideal $I$ in any  commutative ring $R$, its {\em Rees algebra} is the ring $R[I t] = \bigoplus_{j \ge 0} I^j t^j \subset R[t]$, where $t$ is a variable. If $R$ is a graded ring having only one maximal graded ideal, $\fm$, then the {\em special fiber ring} of $I \subset R$ is the algebra
\[
\cF (I) = \bigoplus_{j \ge 0} I^j/\fm I^j \cong R[I t] \otimes_{R} R/\fm.
\]

For a monomial ideal $I$, we denote by $G(I)$ the minimal generating set of $I$ that consists of monomials. If $I$ is a monomial ideal whose minimal generators have degree two, then the special fiber ring $\cF (I)$ is isomorphic to $K[G(I)]$. If the minimal generators of $I$ are even squarefree quadratic monomials, then $I$ is the edge ideal of a simple graph, and $K[G(I)]$ is also called the edge subring of this graph.

In what follows, we determine the special fiber ring of a specialized  Ferrers ideal.
First we find its dimension using results from \cite{vila}. We continue to employ the notation from the previous sections. In particular, $\la$ is a partition with $n$ parts, its largest one being $\la_1 = m$.

\begin{Proposition}
  \label{prop:dim-fibers-ring}
The Krull dimension of the special fiber  ring  of a specialized Ferrers ideal $\overline{I}_{\la - \mu}$ is
\begin{equation*}
  \label{eq:dim-fibers-ring}
\dim \cF (\overline{I}_{\la - \mu}) = m + \min \{0, n-1 - \mu_1 \} =   \begin{cases}
  m & \text{if } \mu_1 \le n-1 \\
  m + n - \mu_1 -1 & \text{if }  \mu_1 \ge n.
\end{cases}
\end{equation*}
\end{Proposition}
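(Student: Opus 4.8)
The plan is to realize $\cF(\overline I_{\la-\mu})$ as the edge subring of a graph $G$ and to read off its Krull dimension from the combinatorics of $G$, invoking the dimension formula for monomial subrings generated in degree two from \cite{vila}. Since $\overline I_{\la-\mu}$ is minimally generated by the quadrics $x_ix_j$ with $1\le i\le n$ and $\mu_i<j\le\la_i$, its special fiber ring is the monomial subalgebra $\cF(\overline I_{\la-\mu})\cong K[x_ix_j\mid 1\le i\le n,\ \mu_i<j\le\la_i]$. This is the edge subring $K[G]$ of the graph $G$ whose edges are the pairs $\{i,j\}$ recorded by the generators, where $x_i^2$ contributes a loop at $i$ (which happens exactly when $\mu_i=i-1$). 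Its dimension equals the rank over $\mathbb{Q}$ of the incidence vectors $e_i+e_j$, and by \cite{vila} this rank is $v$ if $G$ is connected and non-bipartite (a loop counting as an odd cycle) and $v-1$ if $G$ is connected and bipartite, where $v$ is the number of vertices actually occurring in a generator.

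Next I would pin down the vertex set. Every row index $1,\dots,n$ occurs, since each row is nonempty because $\mu_i\le\mu_n<\la_n\le\la_i$; and the columns that occur are exactly $\{\mu_1+1,\dots,m\}$, because row $1$ already produces all of them (as $\la_1=m$) while every other row contributes columns inside this range (as $\mu_1\le\mu_i$ and $\la_i\le m$). Hence the occurring vertices are $\{1,\dots,n\}\cup\{\mu_1+1,\dots,m\}$, which has $m$ elements when $\mu_1\le n-1$ and $n+m-\mu_1$ elements when $\mu_1\ge n$. Connectivity is straightforward: vertex $1$ is joined through row $1$ to every column $\mu_1+1,\dots,m$, and each row $i$ meets some column $>\mu_1$ (for instance its rightmost box $\{i,\la_i\}$, since $\la_i\ge\la_n>\mu_n\ge\mu_1$), placing it in the component of $1$.

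The decisive point is bipartiteness, and here the dichotomy of the statement emerges. If $\mu_1\ge n$, then every edge $\{i,j\}$ satisfies $j>\mu_i\ge\mu_1\ge n\ge i$, so $j>i$ and no loop occurs; the splitting into rows $\{1,\dots,n\}$ and columns $\{\mu_1+1,\dots,m\}$ (disjoint in this case) exhibits $G$ as bipartite, yielding $\dim\cF(\overline I_{\la-\mu})=(n+m-\mu_1)-1$. If instead $\mu_1\le n-1$, set $r:=\mu_1+1\le n$, so $r$ is simultaneously a row and a column, and compare $\mu_r$ with $r$ (noting $\mu_r\ge\mu_1=r-1$). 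If $\mu_r=r-1$, then $\mu_r<r\le\la_r$, so $x_r^2$ is a generator and $G$ carries a loop. Otherwise $\mu_r\ge r$, and I would exhibit the triangle on the distinct vertices $1<r<\mu_r+1$: its edges $\{1,r\}$, $\{1,\mu_r+1\}$, $\{r,\mu_r+1\}$ all exist because $\mu_1<r\le m$, because $\mu_1<\mu_r+1\le\la_r\le m$, and because $\mu_r<\mu_r+1\le\la_r$. In either case $G$ is non-bipartite, so $\dim\cF(\overline I_{\la-\mu})=m$. Combining the two cases gives $\dim\cF(\overline I_{\la-\mu})=m+\min\{0,n-1-\mu_1\}$.

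I expect the main obstacle to be this last step, namely verifying non-bipartiteness uniformly when $\mu_1\le n-1$: it genuinely bifurcates into a loop case and a triangle case, and one must check the governing inequalities so that all the asserted edges really lie in the tableau. A secondary technical point deserving care is confirming that the edge-subring dimension result of \cite{vila} applies verbatim when the generating monomials include squares $x_i^2$, i.e.\ that such a loop plays the role of an odd cycle in forcing the incidence rank up to the full number of vertices $v$.
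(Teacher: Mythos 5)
Your proposal is correct and takes essentially the same route as the paper: both realize $\cF(\overline I_{\la-\mu})$ as the edge subring of the graph recorded by the generators and apply the dimension results for edge subrings from \cite{vila} (connectivity plus the bipartite/non-bipartite dichotomy), with the identical case split $\mu_1\ge n$ versus $\mu_1\le n-1$. If anything, your explicit certificate of non-bipartiteness in the case $\mu_1\le n-1$ --- the loop at $r=\mu_1+1$ when $\mu_r=r-1$, or else the triangle on $1<r<\mu_r+1$ when $\mu_r\ge r$ --- is tighter than the paper's argument, which passes to the squarefree subideal and invokes a square generator $x_j^2$ whose existence in the bipartite subcase it does not verify.
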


\begin{proof} We consider several cases.
Assume that $\mu_1 \ge n$. Then $\overline{I}_{\la - \mu}$ is the edge ideal of a bipartite graph $\Gamma_{\la - \mu}$ on the vertex set $\{x_1,\ldots,x_n\} \sqcup \{x_{\mu_1 + 1},\ldots,x_m\}$. In fact, $\Gamma_{\la - \mu}$ is a Ferrers graph on $n + m-\mu_1$ vertices. Since it is connected, we get $\dim \cF (\overline{I}_{\la - \mu}) = m + n- \mu_1-1$ (see \cite[Proposition 8.2.12]{vila} or \cite{SVV}), as claimed.

Let $\mu_1 \le n-1$. Recall that $\overline{I}_{\la - \mu}$ is not necessarily a squarefree monomial ideal. Consider the subideal of $\overline{I}_{\la - \mu}$ that is generated by the squarefree monomials in $\overline{I}_{\la - \mu}$. It is the edge ideal $\overline{I}_{\la' - \mu'}$ of a connected graph $\Gamma_{\la' - \mu'}$ on $m$ vertices. This is clear if the partition $\la'$ also has $n$ positive parts. However, if the latter condition fails, then $x_n^2$ is in $\overline{I}_{\la - \mu}$. Hence the monomials $x_1 x_n,\ldots, x_{n-1} x_n$ are in $\overline{I}_{\la - \mu}$, so they are in $\overline{I}_{\la' - \mu'}$. It follows that in any case $\Gamma_{\la' - \mu'}$ is a connected graph.

Let $x_j^2$ be a generator of $\overline{I}_{\la - \mu}$ that is not in $\overline{I}_{\la' - \mu'}$. Assume the graph $\Gamma_{\la' - \mu'}$ is not bipartite. Then \cite[Exercise 8.2.16]{vila} implies that $K[G(\overline{I}_{\la' - \mu'}), x_j^2]$ has dimension $m$.
Since
\[
K[G(\overline{I}_{\la' - \mu'}), x_j^2] \subset K[G(\overline{I}_{\la - \mu})] \subset K[x_1,\ldots,x_m], 
\]
we conclude that $K[G(\overline{I}_{\la - \mu})] \cong \cF (\overline{I}_{\la - \mu})$ has dimension $m$.

If $\Gamma_{\la' - \mu'}$ is a bipartite graph, then $K[G(\overline{I}_{\la' - \mu'})]$ has already dimension $m$, and thus the above argument gives again that  the dimension of  $K[G(\overline{I}_{\la - \mu})]$ is $m$, as claimed.
\end{proof}

The main result of this section is:

\begin{Theorem}
   \label{thm:special-fiber-ring-is-ladder-determinantal}
The special fiber ring of  $\overline I_{\lambda-\mu}$ is a  determinantal ring arising from the two-minors of    a symmetric tableau which may have holes. More precisely, there is a graded isomorphism
    \[
       \mathcal F(\overline I_{\lambda-\mu}) \cong  K[\T_{\lambda - \mu}] / I_2 (\bSlm).
    \]
It is a normal Cohen-Macaulay domain that is Koszul. 

\end{Theorem}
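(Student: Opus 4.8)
The plan is to produce an explicit presentation of the special fiber ring $\cF(\overline I_{\la-\mu}) \cong K[G(\overline I_{\la-\mu})]$ as a quotient of $K[\bTlm]$ and to show that the kernel of the natural surjection is exactly $I_2(\bSlm)$; the remaining assertions then follow immediately from the work already done. First I would set up the presentation map. Since the minimal generators of $\overline I_{\la-\mu}$ are the monomials $x_i x_j$ with $1 \le i \le n$ and $\mu_i < j \le \la_i$, and $\cF(\overline I_{\la-\mu}) \cong K[G(\overline I_{\la-\mu})]$ is the $K$-subalgebra of $K[x_1,\ldots,x_{\max\{n,m\}}]$ generated by these monomials, I define the $K$-algebra surjection
\[
\pi \colon K[\bTlm] \longrightarrow K[G(\overline I_{\la-\mu})], \qquad T_{ij} \longmapsto x_i x_j,
\]
where I use the symmetric convention $x_i x_j = x_j x_i$ so that a variable $T_{ij}$ in the symmetrized region $\bSlm$ (with $i > j$) maps to the same monomial as its mirror image $T_{ji}$ in $\bTlm$. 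The goal is to prove $\ker \pi = I_2(\bSlm)$.

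The inclusion $I_2(\bSlm) \subseteq \ker \pi$ is the easy direction: each $2$-minor $T_{ij}T_{kl} - T_{il}T_{kj}$ maps under $\pi$ to $(x_ix_j)(x_kx_l) - (x_ix_l)(x_kx_j) = 0$, so every generator, hence all of $I_2(\bSlm)$, lies in the kernel. For the reverse inclusion I would argue via Hilbert functions (dimensions), which is the cleanest route given what is available. By Proposition~\ref{prop:primality} the ideal $I_2(\bSlm)$ is prime, so $K[\bTlm]/I_2(\bSlm)$ is a domain; the induced map $\overline\pi \colon K[\bTlm]/I_2(\bSlm) \to K[G(\overline I_{\la-\mu})]$ is a surjection of domains, and it is an isomorphism precisely when the two rings have the same Krull dimension. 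By Proposition~\ref{prop:dim-fibers-ring} the target has dimension $m + \min\{0, n-1-\mu_1\}$, so it suffices to compute $\dim K[\bTlm]/I_2(\bSlm)$ and match it. Using that the initial ideal is computed in Theorem~\ref{thm:initial-ideals}(b), dimension is preserved under passing to the initial ideal, so $\dim K[\bTlm]/I_2(\bSlm)$ equals the number of variables minus $\hht \ini(I_2(\bSlm)) = \max\{0,n-1-\mu_1\} + \sum_{i=2}^n(\la_i - \mu_i - 1)$. The number of variables in $K[\bTlm]$ is $\sum_{i=1}^n (\la_i - \mu_i)$, and subtracting the height gives exactly $(\la_1 - \mu_1) + \sum_{i=2}^n 1 - \max\{0,n-1-\mu_1\}$, which I would simplify to $m + \min\{0, n-1-\mu_1\}$ after recalling $\la_1 = m$; the two expressions agree, forcing $\overline\pi$ to be an isomorphism of domains.

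The main obstacle is the bookkeeping in the dimension match, in particular verifying the arithmetic identity $\sum_i(\la_i-\mu_i) - \hht\ini(I_2(\bSlm)) = m + \min\{0,n-1-\mu_1\}$ and confirming that the surjection $\overline\pi$ between two domains of equal finite Krull dimension is genuinely injective — this last step uses that a surjective homomorphism of finitely generated domains over a field that preserves dimension must have trivial kernel, since a nonzero kernel would strictly drop the dimension of the image. Once the isomorphism is established, the final sentence is immediate: Corollary~\ref{cor:primeness} already states that $K[\bTlm]/I_2(\bSlm)$ is a normal Cohen-Macaulay domain that is Koszul, and all four properties transfer across the graded isomorphism. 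I would close by noting that the isomorphism is graded with respect to the standard grading on $K[\bTlm]$ and the grading on $K[G(\overline I_{\la-\mu})]$ that places each degree-two generator in degree one, which is exactly the grading making $\cF(\overline I_{\la-\mu})$ standard graded.
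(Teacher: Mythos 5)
Your proposal is correct and takes essentially the same route as the paper's own proof: the paper defines the same map $\pi \colon K[\bTlm] \to K[G(\overline I_{\lambda-\mu})]$, $T_{ij}\mapsto x_ix_j$, notes $I_2(\bSlm)\subseteq \ker\pi$, and then combines Proposition~\ref{prop:primality}, Theorem~\ref{thm:initial-ideals}(b), and Proposition~\ref{prop:dim-fibers-ring} to conclude that the two prime ideals have the same height and hence coincide, after which Corollary~\ref{cor:primeness} supplies the normal Cohen--Macaulay Koszul properties. Your dimension-matching formulation (a dimension-preserving surjection of finitely generated domains over a field is injective) is just the quotient-ring restatement of the paper's height comparison $\hht I_2(\bSlm) = \hht \ker\pi$, and your arithmetic verification is the same computation.
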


\begin{proof}
Consider the algebra epimorphism
\[
\pi\colon K[{\bf T}] \twoheadrightarrow K[G(\overline I_{\lambda-\mu})] \cong
\mathcal F(\overline I_{\lambda-\mu}),
\]
where $\pi(T_{ij})=x_i x_j$. We claim that the kernel of $\pi$ is
the determinantal ideal $I_2 (\bSlm)$. Since $\pi$ maps all 2-minors in $\bSlm$ to zero, we get  $I_2 (\bSlm) \subset \ker \pi$. Both ideals are prime ideals (see Corollary \ref{cor:primeness}). Thus, to deduce the desired equality it is enough to
show that the two ideals have the same height. Then Corollary~\ref{cor:primeness} gives the asserted properties of $\mathcal F(\overline I_{\lambda-\mu})$. 

Theorem \ref{thm:initial-ideals}(b), on the one hand, implies that 
\[
\hht I_2(\bSlm) = \max \{0, n-1-\mu_1\} +  \sum_{i=2}^n (\la_i - \mu_i - 1).
\]

On the other hand,  Proposition \ref{prop:dim-fibers-ring} and $\la_1 = m$ provides 
\begin{align*}
  \hht \ker \pi & = \dim K[\bTlm] - \dim \mathcal F (\overline I_{\lambda-\mu}) \\
  & = \sum_{i=1}^n (\la_i - \mu_i) \; - \; \left [ m + \min \{0, n-1 - \mu_1 \} \right ] \\
  & = \sum_{i=2}^n (\la_i - \mu_i - 1) - \mu_1 + n-1 - \min \{0, n-1 - \mu_1 \} \\
  & =  \sum_{i=2}^n (\la_i - \mu_i - 1) + \max \{0, n-1-\mu_1\},
\end{align*}
as desired.
\end{proof}

\begin{Remark}
  \label{rem:includes-Ferrers-graphs}
Observe that the description of the special fiber ring becomes simpler if $\mu_1 \ge n-1$. Indeed, then  $I_2 (\bSlm) = I_2 (\T_{\lambda-\mu})$ (see Figure \ref{fig:no new minors by symm}). Moreover, if $\mu$ satisfies the even stronger assumption $\mu_1 \ge n$, then $\overline I_{\lambda-\mu}$ is the edge ideal of a Ferrers graph. Thus, Theorem \ref{thm:special-fiber-ring-is-ladder-determinantal} includes in particular a description of the special fibers ring of a Ferrers ideal. This identification was first obtained in \cite[Proposition 5.1(b)]{CorsoNagel}. Note however that the descriptions above and in \cite{CorsoNagel} use a priori different determinantal ideals, due to the presentation of a Ferrers ideal by different tableau.


\begin{figure}[h!]
\includegraphics{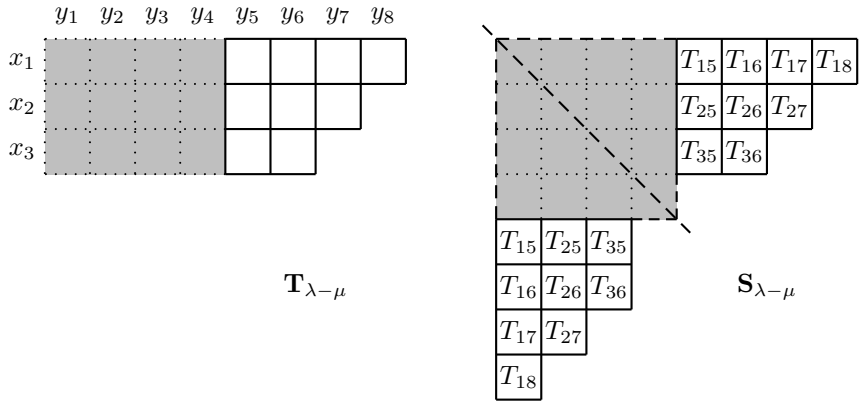}
\caption{A skew shape and its symmetrization for $\la = (8,7,6)$ and $\mu = (4,4,4)$} 
\label{fig:no new minors by symm}
\end{figure}
\end{Remark}

The last result allows us also to give a determinantal description of the Rees algebra of a specialized Ferrers ideal.

\begin{Corollary}
  \label{cor:Rees-algebra}
Let  $\overline I_{\lambda-\mu} \subset R=K[x_1,\ldots,x_m]$ be a specialized Ferrers ideal. Then  its Rees algebra $R[\overline I_{\lambda-\mu} t]$ is isomorphic to
the special fiber ring $\cF (J)$ of the ideal $J \subset R[x_0]$, where $x_0$ is a new variable and $J = \overline I_{\lambda-\mu} + x_0 (x_1,\ldots,x_m) \subset R$.

In particular, the Rees algebra $R[\overline I_{\lambda-\mu} t]$  is a normal Cohen-Macaulay domain that is Koszul.
\end{Corollary}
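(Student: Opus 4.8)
The plan is to realize the Rees algebra of $\overline I_{\lambda-\mu}$ as a special fiber ring of a \emph{larger} specialized Ferrers ideal in one extra variable, and then invoke Theorem~\ref{thm:special-fiber-ring-is-ladder-determinantal} to transfer all the desired properties. The key observation is that the generators of $J = \overline I_{\lambda-\mu} + x_0(x_1,\ldots,x_m)$ are again monomials of degree two in the variables $x_0,x_1,\ldots,x_m$, so $\mathcal F(J) \cong K[G(J)]$. The monomials $x_0 x_1,\ldots,x_0 x_m$ correspond exactly to prepending a new full row (indexed by the variable $x_0$) to the skew shape of $\overline I_{\lambda-\mu}$. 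Hence $J$ is itself a specialized Ferrers ideal $\overline I_{\widehat\lambda - \widehat\mu}$ for an augmented pair $(\widehat\lambda,\widehat\mu)$, provided the numerical constraints $0 \le \widehat\mu_1 \le \cdots$ and $\widehat\mu_i \ge i-1$ still hold after the augmentation. So the first step is to exhibit this augmented partition explicitly and check it satisfies the standing hypotheses of Section~\ref{sec:Gbases}.

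\textbf{Main steps.} First I would recall the standard identification $R[\overline I_{\lambda-\mu}\,t] \cong \mathcal F(J)$, where the new variable $x_0$ plays the role of $t$; this is the classical fact that the Rees algebra of an ideal generated in a single degree is the special fiber of the ideal together with $x_0 \cdot \mathfrak m$. Concretely, the map sends $x_i t^{\,0}\mapsto x_i$ on $R$ and encodes each generator $u_k t$ of the Rees algebra as the degree-two monomial $x_0 u_k^{1/1}$; more precisely one uses that $R[\overline I_{\lambda-\mu}\,t]$ is the subalgebra of $R[t]$ generated by $x_1,\ldots,x_m$ and the $\overline I_{\lambda-\mu}$-generators times $t$, which matches $K[G(J)]$ after the substitution $t \leftrightarrow x_0$. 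Second, I would verify that $J$ is a specialized Ferrers ideal: adding the new row for $x_0$ at the top, with all $m$ cells present, amounts to setting $\widehat\lambda = (m,\lambda_1,\ldots,\lambda_n)$ and $\widehat\mu = (0,\mu_1,\ldots,\mu_n)$, re-indexed so the new row is row $1$; the monotonicity $0 \le \widehat\mu_1 \le \cdots$ and the requirement $\widehat\mu_i \ge i-1$ follow from the corresponding inequalities for $(\lambda,\mu)$ together with $\widehat\mu_1 = 0$. Third, with $J$ recognized as a specialized Ferrers ideal, Theorem~\ref{thm:special-fiber-ring-is-ladder-determinantal} applies verbatim to $\mathcal F(J)$, yielding that it is a normal Cohen–Macaulay domain that is Koszul, and these properties pass to the isomorphic ring $R[\overline I_{\lambda-\mu}\,t]$.

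\textbf{Main obstacle.} The delicate point is the bookkeeping in the second step: I must confirm that prepending the full row really produces a shape meeting all the standing conventions, in particular that the number of minimal generators is preserved under specialization (the condition $\widehat\mu_i \ge i-1$) after the index shift. The subtlety is that the new row has $\widehat\mu_1 = 0 < 1 - 1 + 1$, so one must check that the shift in row indices does not violate $\widehat\mu_i \ge i-1$ for the \emph{old} rows, now sitting in positions $2,\ldots,n+1$: there the condition reads $\mu_{i-1} \ge (i)-1 = i-1$, i.e.\ $\mu_{i-1} \ge i-1$, which is \emph{not} automatic from $\mu_{i-1}\ge (i-1)-1 = i-2$. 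I expect this gap to be resolved by a slightly different augmentation --- rather than a full new row of length $m$, one can instead add the column $x_0 x_1,\ldots,x_0 x_m$ in a way that interleaves correctly, or equivalently observe that the full row of length $m$ placed at the \emph{bottom} with $\widehat\mu_{n+1}$ chosen appropriately keeps the constraints intact. Once the correct augmented pair is pinned down so that $(\widehat\lambda,\widehat\mu)$ genuinely satisfies the hypotheses of Section~\ref{sec:Gbases}, the rest is immediate from Theorem~\ref{thm:special-fiber-ring-is-ladder-determinantal}.
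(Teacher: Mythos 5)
Your overall strategy coincides with the paper's: identify $R[\overline I_{\lambda-\mu}t]$ with $\cF(J)$ (the paper does this via the explicit isomorphism $\alpha\colon R[\overline I_{\lambda-\mu}t]\to K[J]$, $x_k\mapsto x_0x_k$, $x_ix_jt\mapsto x_ix_j$, which is the standard fact you invoke), recognize $J$ as a specialized Ferrers ideal in one extra variable, and then quote Theorem~\ref{thm:special-fiber-ring-is-ladder-determinantal}. But the step that carries the real content --- exhibiting the augmented pair --- is wrong in your write-up, and you acknowledge this without repairing it. Your proposed $\widehat\lambda=(m,\lambda_1,\ldots,\lambda_n)$, $\widehat\mu=(0,\mu_1,\ldots,\mu_n)$ fails on two counts. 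First, it does not even produce the ideal $J$: in the specialized framework, row and column indices refer to the \emph{same} variables, so to make $x_0$ a row you are forced to rename $x_0\mapsto x_1$, $x_j\mapsto x_{j+1}$; with $\widehat\mu_1=0$ and $\widehat\lambda_1=m$, the first row would then contribute $x_1^2$ (that is, $x_0^2$, which is not a generator of $J$) and would omit $x_0x_m$. Second, as you yourself observe, the standing condition $\widehat\mu_i\ge i-1$ fails for the old rows in their shifted positions. Your two suggested repairs do not close this gap: adding the $x_0$-products as a bottom row (or as a ``column'') forces either rows beyond index $n$ or non-contiguous rows, which the shape $\bTlm$ does not permit, except in degenerate special cases.

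The missing observation is that the renaming $x_j\mapsto x_{j+1}$ shifts the \emph{column} bounds together with the row indices. The correct augmentation, which is exactly the paper's Corollary~\ref{cor:Rees-algebra-simplified}, is $\la'=(\la_1+1,\la_1+1,\la_2+1,\ldots,\la_n+1)$ and $\mu'=(1,\mu_1+1,\ldots,\mu_n+1)$: the new first row has $\mu'_1=1$ (excluding the diagonal cell coming from $x_0^2$) and length $m+1$, while each old row keeps its length $\la_i-\mu_i$ but moves one step down and one step to the right. With this simultaneous shift all hypotheses of Section~\ref{sec:Gbases} become automatic: $\mu'_1=1\ge 0$, and $\mu'_{i+1}=\mu_i+1\ge i$ follows precisely from $\mu_i\ge i-1$, so the obstruction you flagged evaporates. (The paper also splits off the case $\mu_1>n$: there it discards the variables $x_{n+1},\ldots,x_{\mu_1}$, which occur in no generator of $\overline I_{\lambda-\mu}$, applies the simplified corollary, and adjoins these variables back as polynomial variables.) Since pinning down this identification is essentially the entire proof --- everything else is Theorem~\ref{thm:special-fiber-ring-is-ladder-determinantal} plus the standard fiber-ring description of the Rees algebra --- the proposal as written has a genuine gap.
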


We prove this result below after making it more precise. 

\begin{Remark}
  \label{rem:Rees-algebra-to-fewer-variables}
Notice that in the case $\mu_1 > n$ none of the variables $x_i$ with $n < i \leq \mu_1$ divides a monomial minimal generator of $\overline I_{\lambda-\mu}$. Thus, the properties of $\overline I_{\lambda-\mu}$ can be studied by considering it as an ideal in the smaller polynomial ring, which is obtained from $R$ by dropping the variables $x_{n + 1},\ldots,x_{\mu_1}$. Equivalently, this amounts to renaming the variables $x_{\mu_1+1},\ldots,x_{m}$ by $x_{n + 1},\ldots,x_{m+n-\mu_1}$ and considering the resulting Ferrers ideal $\overline I_{\lambda'-\mu'}$ in a polynomial ring with variables $x_{1},\ldots,x_{m+n-\mu_1}$, where now $\mu'_1 = n$. This allows us to essentially restrict ourselves to Ferrers ideals $\overline I_{\lambda-\mu}$ satisfying $\mu_1 \leq n$.
\end{Remark}

\begin{Corollary}
  \label{cor:Rees-algebra-simplified}
Let  $\overline I_{\lambda-\mu} \subset R=K[x_1,\ldots,x_m]$ be a specialized Ferrers ideal with $\mu_1 \le n$. Then   its Rees algebra is determinantal. More, precisely, there are algebra isomorphisms
\[
R[\overline I_{\lambda-\mu} t] \cong \cF (\overline I_{\lambda'-\mu'}) \cong K[\bT_{\la' - \mu'}]/I_2 (\bS_{\la' - \mu'}),
\]
where
\[
\la'  = (\la_1 + 1, \la_1 + 1, \la_2 + 1,\ldots,\la_n + 1) \in \ZZ^{n+1}
\]
and
\[
\mu' = (1, \mu_1 + 1, \mu_2 + 1, \mu_n + 1) \in \ZZ^{n+1}.
\]
\end{Corollary}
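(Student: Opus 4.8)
The plan is to deduce the statement from Theorem~\ref{thm:special-fiber-ring-is-ladder-determinantal} by first exhibiting the Rees algebra as the special fiber ring of a larger specialized Ferrers ideal. By Remark~\ref{rem:Rees-algebra-to-fewer-variables} we may assume $\mu_1 \le n$, so that every variable $x_1,\ldots,x_m$ actually divides a minimal generator of $\overline I_{\lambda-\mu}$. Set $J := \overline I_{\lambda-\mu} + x_0(x_1,\ldots,x_m) \subset R[x_0]$; since $x_0^2 \notin J$, its minimal monomial generating set is $G(J) = G(\overline I_{\lambda-\mu}) \cup \{x_0 x_l \mid 1 \le l \le m\}$, all of degree two. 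The first step is to prove Corollary~\ref{cor:Rees-algebra}, namely $R[\overline I_{\lambda-\mu} t] \cong \cF(J)$; the second step is to recognize $J$ as a specialized Ferrers ideal $\overline I_{\lambda'-\mu'}$ and to invoke Theorem~\ref{thm:special-fiber-ring-is-ladder-determinantal}.

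For the first step, recall that $\cF(J) \cong K[G(J)]$ is the toric $K$-subalgebra of $K[x_0,\ldots,x_m]$ generated by the monomials in $G(J)$, while $R[\overline I_{\lambda-\mu} t] = K[x_1,\ldots,x_m,\{x_ix_j t\}] \subset R[t]$ is the toric subalgebra generated by the $x_l$ and the $x_ix_j t$. I would define the map on generators by $x_0 x_l \mapsto x_l$ and $x_ix_j \mapsto x_ix_j t$, and argue it is an isomorphism $\cF(J) \xrightarrow{\sim} R[\overline I_{\lambda-\mu} t]$. Surjectivity is immediate, since the images already generate the Rees algebra. For well-definedness and injectivity it suffices to check that the underlying assignment of exponent vectors extends to an isomorphism of affine semigroups: writing a monomial of $\cF(J)$ by its exponent vector $(c_0,\ldots,c_m)$, the map sends it to $x_1^{c_1}\cdots x_m^{c_m}\, t^{(\sum_{l\ge 1} c_l - c_0)/2}$, and one recovers $c_l$ ($l\ge 1$) as the $x_l$-exponent and $c_0$ as $\sum_{l\ge 1} c_l - 2e$ from the $t$-exponent $e$, so the correspondence is bijective. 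The key identity is that $\tfrac12(\sum_{l\ge1}c_l - c_0)$ equals the number of degree-two Ferrers generators occurring in the monomial, hence a nonnegative integer on monomials of $\cF(J)$; this is exactly what makes the map land in $R[\overline I_{\lambda-\mu}t]$ and correctly encodes the Rees $t$-grading.

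For the second step, relabel the variables of $R[x_0]$ by $y_1 := x_0$ and $y_{i+1} := x_i$ for $1 \le i \le m$. A direct check shows that under this relabeling $J$ becomes exactly $\overline I_{\lambda'-\mu'}$ with $\lambda' = (\la_1+1,\la_1+1,\la_2+1,\ldots,\la_n+1)$ and $\mu' = (1,\mu_1+1,\ldots,\mu_n+1)$: row $1$ produces $y_1 y_j$ for $1 < j \le m+1$ (the monomials $x_0 x_l$), while row $i+1$ produces $y_{i+1} y_j$ for $\mu_i+1 < j \le \la_i+1$ (the monomials $x_i x_l$ with $\mu_i < l \le \la_i$). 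I would then verify that $\lambda'$ and $\mu'$ satisfy the standing hypotheses: the monotonicity $0 \le \mu'_1 \le \cdots \le \mu'_{n+1} < \la'_{n+1} \le \cdots \le \la'_1$ follows from the corresponding inequalities for $\la,\mu$, and $\mu'_{i+1} = \mu_i + 1 \ge (i-1)+1 = (i+1)-1$ together with $\mu'_1 = 1 \ge 0$ gives $\mu'_i \ge i-1$. Consequently $\cF(J) = \cF(\overline I_{\lambda'-\mu'})$, and Theorem~\ref{thm:special-fiber-ring-is-ladder-determinantal} yields $\cF(\overline I_{\lambda'-\mu'}) \cong K[\bT_{\la'-\mu'}]/I_2(\bS_{\la'-\mu'})$, which chains with the first step to give the asserted isomorphisms.

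I expect the main obstacle to be the first step: verifying rigorously that the monomial assignment $x_0 x_l \mapsto x_l$, $x_ix_j \mapsto x_ix_j t$ is a genuine ring isomorphism and not merely a bijection of generating sets. The delicate point is that it is not induced by a graded ring map on the ambient polynomial rings—the naive such map would require a square root of $t$—so the argument must be carried out at the level of the defining affine semigroups, keeping careful track of how the Rees $t$-grading is recorded in the $x_0$-exponent. Once this toric identification is secured, the remaining steps are the purely combinatorial bookkeeping of the index shift $i \mapsto i+1$ and a routine application of the earlier theorem.
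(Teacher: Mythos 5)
Your proposal is correct and follows essentially the same route as the paper: the paper's proof uses the map $\alpha: R[\overline I_{\lambda-\mu} t] \to K[J]$, $x_k \mapsto x_0 x_k$, $x_ix_jt \mapsto x_ix_j$ (the inverse of your map) to identify the Rees algebra with $\cF(J)$, and then observes that, up to renaming variables, $J$ equals $\overline I_{\lambda'-\mu'}$ so that Theorem~\ref{thm:special-fiber-ring-is-ladder-determinantal} applies. The only difference is one of detail: the paper simply asserts that $\alpha$ is an isomorphism and that the renaming works, whereas you supply the exponent-vector (affine semigroup) verification and the check of the standing hypotheses on $(\lambda',\mu')$, which are exactly the right justifications.
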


\begin{Remark}
  \label{rem:Rees-algebra-simplified}

(i) The passage from the special fiber ring of $\overline I_{\lambda-\mu}$ to  its Rees algebra given in  Corollary \ref{cor:Rees-algebra-simplified} can also be described as follows. Augment the tableau $\bSlm$ with a new top row and a new leftmost column. Leave the new northwest corner empty and fill the new top row with the variables $x_1,\ldots,x_m$ from left to right and the leftmost column with $x_1,\ldots,x_m$ from top to bottom. 

\begin{figure}[h!]
\includegraphics{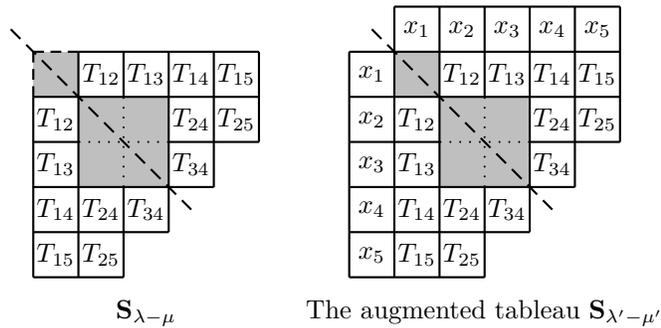}
\caption{A symmetrized tableau and its augmentation.}
\label{fig:SandAugm}
\end{figure}
Let $I$ be the ideal of $R[\bTlm]$ that is generated by the 2-minors in the augmented tableau. Up to the names of the variables, the augmented tableau is the same as $\bS_{\la' - \mu'}$. Hence Corollary \ref{cor:Rees-algebra-simplified} gives the isomorphism
\[
R[\overline I_{\lambda-\mu} t] \cong R[\bTlm]/I.
\]

(ii) If $\overline I_{\lambda-\mu}$ is the edge ideal of a graph $\Gamma$, then the last isomorphism says that the Rees algebra of $\overline I_{\lambda-\mu}$ is isomorphic to the special fiber ring of the edge ideal to the cone over $\Gamma$. This is true for arbitrary edge ideals of graphs by \cite[Proposition 8.2.15]{vila}.

(iii) The Rees algebra of a complete graph on $n$ vertices was already identified by Villarreal (see \cite[Exercise 9.2.14]{vila}). In our notation this is the ring $R[\overline I_{\lambda-\mu} t]$, where $\la = (n,n,\ldots,n) \in \ZZ^n$ and $\mu = (1,2,\ldots,n) \in \ZZ^n$.
\end{Remark}

\begin{proof}[Proof of Corollary \ref{cor:Rees-algebra} and Corollary \ref{cor:Rees-algebra-simplified}]
Consider the ring homomorphisms
\[
\ffi: R[\bTlm] \to R[\overline I_{\lambda-\mu} t], \; T_{i j} \mapsto x_i x_j
\]
and
\[
\alpha:  R[\overline I_{\lambda-\mu} t] \to K[J], \; x_k \mapsto x_0 x_k, \; x_i x_j t \mapsto x_i x_j.
\]
The first part of Corollary \ref{cor:Rees-algebra} follows because $\alpha$ is an isomorphism.

Now let us assume $\mu_1 \le n$. Then, up to renaming variables, the ideals $J$ and $\overline I_{\lambda'-\mu'}$ are equal, and $R[\bT_{\lambda-\mu}]$ is isomorphic to the polynomial ring $K[\bT_{\la' - \mu'}]$. Hence $\alpha \circ \ffi$ is the map that induces an isomorphism $K[\bT_{\la' - \mu'}]/I_2 (\bS_{\la' - \mu'}) \cong K [\overline I_{\lambda'-\mu'}]$, which in turn in isomorphic to $K[J] \cong \cF (J)$. This establishes Corollary \ref{cor:Rees-algebra-simplified}. Moreover, Corollary \ref{cor:primeness} gives that $\cF (J)$ is a normal Cohen-Macaulay domain that is Koszul.

It remains to consider the case $\mu_1 > n$. Put $S = K[x_1,\ldots,x_n, \ x_{\mu_1 + 1},\ldots,x_m]$. Then $\cF (J) \cong \cF (J S)[x_{n+1},\ldots,x_{\mu_1}]$. Since, $\cF (J S)$ is a determinantal ring that is Koszul and a normal Cohen-Macaulay domain by Corollary \ref{cor:Rees-algebra-simplified}, the same is true for $\cF (J)$.
\end{proof}


\section{Minimal reductions}\label{sec:reductions}

In the special case where $\lambda = (m,m,\ldots,m) \in \ZZ^n$, the ideal  $I_{\lambda}$ is the edge ideal of a complete bipartite graph, and a distinguished  minimal reduction of $I_{\lambda}$ is given by the Dedekind-Mertens content formula (see \cite{Northcott, CVV, BG}). 
Here we extend this result to arbitrary Ferrers ideals. 

Recall that an ideal $J$ is said to be a \emph{reduction} of an ideal $I$ if $J \subset I$ and there is an integer $r \ge 0$ such that 
\[
I^{r+1} = J \cdot I^r. 
\]
The minimum integer $r$ such that this equality holds is called the \emph{reduction number} of $I$ with respect to $J$ and denoted by $r_J (I)$. A reduction $J$ is \emph{minimal} if no ideal strictly
contained in $J$ is a reduction of $I$. The (absolute) \emph{reduction number} of $I$ is  
\[
r (I) = \min \{r_J(I) \s J \text{ is a minimal reduction of } I\}.
\]

\begin{Theorem}
  \label{thm:min-red-Ferrers}
For every partition $\lambda = (\lambda_1 = m \ge \lambda_2 \ge \cdots \ge \lambda_n)$ with positive parts, the $m+n-1$ diagonals of the Ferrers tableau $\bT_{\lambda}$  generate a minimal reduction $J_{\lambda}$ of the Ferrers ideal $I_{\lambda}$. More precisely, this minimal reduction is generated by
\[
\sum_{i \ge 1} x_i y_{k+i}, \quad k = 0,\ldots,m-1,
\]
and
\[
\sum_{i \ge 1} x_{k+i} y_{i}, \quad k = 1,\ldots,n-1,
\]
where the summands are monomials that are contained in $I_{\lambda}$.
\end{Theorem}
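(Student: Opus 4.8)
The plan is to pass to the special fiber ring and recognize the asserted minimal reduction as a homogeneous system of parameters there. By \cite[Proposition 5.1]{CorsoNagel} (cf.\ Theorem \ref{thm:special-fiber-ring-is-ladder-determinantal} and Remark \ref{rem:includes-Ferrers-graphs}), the special fiber ring is
\[
\cF(I_\lambda) \cong K[\bT_\lambda]/I_2(\bT_\lambda),
\]
a normal Cohen--Macaulay domain (see \cite{CorsoNagel} or Corollary \ref{cor:primeness}) whose Krull dimension equals $m+n-1$, since $I_\lambda$ is the edge ideal of a connected Ferrers graph on the $m+n$ vertices $x_1,\dots,x_n,y_1,\dots,y_m$ (see \cite{vila}). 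Under the identification $T_{ij}\leftrightarrow x_iy_j$, the generators of $J_\lambda$ map to the diagonal sums $\overline D_k=\sum_i T_{i,k+i}$ for $0\le k\le m-1$ and $\overline D_{-k}=\sum_i T_{k+i,i}$ for $1\le k\le n-1$, which lie in degree one of $\cF(I_\lambda)$. Since every cell of $\bT_\lambda$ lies on exactly one diagonal, these $m+n-1$ elements are $K$-linearly independent.

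I would then invoke the standard fiber-cone description of reductions: with $K$ infinite, $J_\lambda\subseteq I_\lambda$ is a reduction if and only if the images of its generators generate an ideal primary to the irrelevant ideal of $\cF(I_\lambda)$, and for degree-one images this is equivalent to their forming a homogeneous system of parameters. As the number of generators equals $m+n-1=\dim\cF(I_\lambda)$ and they are linearly independent, a reduction here is automatically minimal. Thus the entire statement collapses to one assertion: $\overline D_{-(n-1)},\dots,\overline D_0,\dots,\overline D_{m-1}$ is a homogeneous system of parameters of $\cF(I_\lambda)$. (Containment $J_\lambda\subseteq I_\lambda$ is immediate, each summand being a cell monomial, and Cohen--Macaulayness then upgrades the parameter system to a regular sequence, so that $\cF(I_\lambda)$ is even a finite free module over $K[\overline D_k]$.)

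Geometrically, writing $\cF(I_\lambda)=K[\bT_\lambda]/I_2(\bT_\lambda)$ as an affine variety, I must show that the only point of $V(I_2(\bT_\lambda))$ on which all diagonal forms vanish is the origin. Because $I_2(\bT_\lambda)$ is prime (Proposition \ref{prop:primality}), this variety is the irreducible toric variety parametrized by the rank-one configurations $p_{ij}=a_ib_j$ supported on $\bT_\lambda$, so the heart of the matter is the implication: if all truncated diagonal sums $\sum_i a_ib_{i+k}$ and $\sum_i a_{k+i}b_i$ vanish, then $p\equiv 0$. For the full rectangle this is transparent from generating functions: with $A(s)=\sum_i a_is^i$ and $B(t)=\sum_j b_jt^j$ one has $\sum_k D_kt^k=A(t^{-1})B(t)$, whose vanishing forces $A\equiv 0$ or $B\equiv 0$. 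For a genuine staircase the diagonals are truncated at the boundary and this clean factorization is lost; here I would argue by peeling the tableau from its outer corner $(n,\lambda_n)$, exactly as in the inductions proving Theorems \ref{thm:Minors-Grob} and \ref{thm:initial-ideals}, using the one-cell extreme diagonals $D_{m-1}=a_1b_m$ and $D_{-(n-1)}=a_nb_1$ as the base case and the longer interior diagonals to propagate the vanishing inward, together with a reduction to torus-orbit strata to handle the non-generic (degenerate) points of the toric variety.

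This propagation along the skew boundary is the step I expect to require the most care, and it is genuinely where the determinantal relations must enter. It is tempting to replace $\cF(I_\lambda)$ by its initial algebra, but this fails: the leading terms of the $\overline D_k$ are precisely the hook variables $\{T_{1j}\}\cup\{T_{i1}\}$, and modulo $\initial_{\<}(I_2(\bT_\lambda))$ these do \emph{not} cut out a zero-dimensional quotient (already for $\lambda=(2,2)$ one is left with $K[T_{22}]$). The nilpotency that makes the quotient Artinian---e.g.\ $T_{22}^2=0$ in that example---comes only from the binomial straightening relations, not from their leading terms, so the parameter argument has to be carried out with the actual minors in hand. Once the homogeneous system of parameters is established, $J_\lambda$ is a minimal reduction; the exact reduction number is a separate matter, obtained in Theorem \ref{thm:red number Ferrers} from the Castelnuovo--Mumford regularity via Proposition \ref{prop:red number - reg}.
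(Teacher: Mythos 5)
Your reduction of the theorem to a single assertion---that the images of the $m+n-1$ diagonals form a homogeneous system of parameters of $\cF(I_{\la})\cong K[\bT_{\la}]/I_2(\bT_{\la})$---is exactly the paper's strategy, and two of your side remarks are correct and well taken: minimality is automatic once the number of generators equals $\dim\cF(I_{\la})=m+n-1$, and one cannot verify the parameter property by passing to initial ideals termwise, since $\ini(I_2(\bT_{\la}))+\ini(L)\subsetneq \ini(I_2(\bT_{\la})+L)$ already for $\la=(2,2)$. But the core of the proof---showing that $V\bigl(I_2(\bT_{\la})+L\bigr)$ is only the origin---is precisely what you have not supplied. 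Your complete argument covers only the full rectangle, via the factorization $\sum_k D_k t^k=A(t^{-1})B(t)$; for a genuine staircase you offer a plan (``peeling'' from the corner $(n,\la_n)$) whose inductive step, which you yourself identify as the place where the determinantal relations must enter, is never formulated, let alone verified. The paper fills exactly this hole with Lemma \ref{lem:in-Ferrers-red}: an explicit inductive construction, using the diagonal $D_{ij}$ through $T_{ij}$ and the minors $Q_{i,j;i-p,j-p}$, of a polynomial in $I_2(\bT_{\la})+L$ whose leading term is $T_{ij}^j$ with respect to a reverse-lexicographic order; this makes the quotient visibly Artinian. That computation is the substance of the theorem, and nothing in your proposal replaces it.

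There is a second, independent gap in your geometric route: you treat $V(I_2(\bT_{\la}))$ as the set of rank-one configurations $p_{ij}=a_ib_j$, deferring the rest to ``torus-orbit strata.'' For non-rectangular shapes the monomial parametrization is genuinely not surjective, so this is not cosmetic. For instance, for $\la=(2,1)$ the ideal $I_2(\bT_{\la})$ is zero, and the point $(p_{11},p_{12},p_{21})=(0,1,1)$ lies on the variety but admits no factorization $p_{ij}=a_ib_j$, since $a_1b_2=a_2b_1=1$ forces $a_1b_1\neq 0$. So even if your peeling induction were completed, an essential class of points of the variety would remain untouched, and describing the possible supports of such degenerate points (the faces of the edge cone of the Ferrers graph) is itself nontrivial work. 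In short: the framework matches the paper's, but the two steps that would constitute an actual proof are missing; the paper's Lemma \ref{lem:in-Ferrers-red} sidesteps both difficulties at once by arguing algebraically with leading terms of elements of $I_2(\bT_{\la})+L$, which requires no analysis of points of the variety at all.
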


\newpage
For the partition $\lambda:=(5,5,5,2,1)$, this minimal reduction can be represented by the diagram in Figure~\ref{fig:exMinRed}. 
\begin{figure}[h!]
\includegraphics{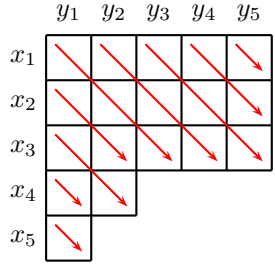}
\caption{Example of Theorem~\ref{thm:min-red-Ferrers}: a Ferrers minimal reduction for $\lambda=(5,5,5,2,1)$.}     \label{fig:exMinRed}
\end{figure}

For the specialized Ferrers ideal $\overline{I}_{\la - \mu}$, we find a distinguished minimal reduction in an important special case, namely when it is a  strongly stable monomial ideal. Figure~\ref{fig:exMinRedSpec} illustrates the result in a simple case. 

\begin{Theorem}
  \label{thm:min-red-specialized-Ferr}
Let $\mu = (0,1,\ldots,n-1) \in \ZZ^n$, and let $\la = (\la_1,\ldots,\la_n)$ be a partition, where $\la_1 = m$ and $\la_n \ge n$. Then the $m$ diagonals in the tableau $\bTlm$ generate a minimal reduction $\overline{J}_{\la - \mu}$ of the specialized Ferrers ideal $\overline{I}_{\la - \mu}$. More precisely, this minimal reduction is generated by
\[
\sum_{i \ge 1} x_i x_{k+i}, \quad k = 0,\ldots,m-1,
\]
where the summands are monomials that are contained in $\overline{I}_{\la - \mu}$.
\end{Theorem}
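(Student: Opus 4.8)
The plan is to pass to the special fiber ring and recognize the statement as a claim about homogeneous systems of parameters. By the Northcott--Rees theory of reductions, a subideal $\overline{J}_{\la-\mu}\subseteq \overline{I}_{\la-\mu}$ generated by degree-two elements is a reduction exactly when the images of its generators span a subalgebra over which $\cF(\overline{I}_{\la-\mu})$ is module-finite, i.e. when those images form a homogeneous system of parameters; moreover the analytic spread $\ell(\overline{I}_{\la-\mu})=\dim\cF(\overline{I}_{\la-\mu})$ is a lower bound for the number of generators of \emph{any} reduction. Since $\mu_1=0\le n-1$, Proposition~\ref{prop:dim-fibers-ring} gives $\dim\cF(\overline{I}_{\la-\mu})=m$. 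As $\overline{J}_{\la-\mu}$ is generated by exactly the $m$ diagonals $g_k=\sum_{i\ge 1}x_ix_{k+i}$, once I show these are a system of parameters, minimality is automatic: no reduction can be generated by fewer than $\ell(\overline{I}_{\la-\mu})=m$ elements. So the whole theorem reduces to the parameter claim.

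Using Theorem~\ref{thm:special-fiber-ring-is-ladder-determinantal} I identify $\cF(\overline{I}_{\la-\mu})\cong K[\bTlm]/I_2(\bSlm)$, under which $g_k$ becomes the diagonal sum $\overline{g}_k:=\sum_{i\ge 1}T_{i,\,i+k}$. Because $\cF(\overline{I}_{\la-\mu})$ is a Cohen--Macaulay domain of dimension $m$ (Corollary~\ref{cor:primeness}) and I have exactly $m$ linear forms, they form a system of parameters if and only if the quotient $K[\bTlm]/\bigl(I_2(\bSlm)+(\overline{g}_0,\dots,\overline{g}_{m-1})\bigr)$ is Artinian. By the Nullstellensatz (after base change to $\overline{K}$) this is the concrete assertion: \emph{if $(v_{ij})$ satisfies every $2\times 2$ minor relation of $\bSlm$ and $\sum_{i}v_{i,\,i+k}=0$ for $k=0,\dots,m-1$, then all $v_{ij}=0$.}

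I would prove this by induction on the number of rows $n$, splitting on whether $v_{11}$ vanishes. On the open locus $v_{11}\ne 0$ the hypothesis $\mu_1=0$ makes the first row of $\bTlm$ complete, so the minors through row and column $1$ force $v_{ij}=a_ia_j$ with $a_1=\sqrt{v_{11}}\ne 0$ and $a_i=v_{1i}/a_1$; the point is of rank one. The diagonal conditions then read $\sum_i a_ia_{i+k}=0$, and since $\la_1=m$ a descending sweep isolates one coordinate at a time: $\overline{g}_{m-1}=a_1a_m$ gives $a_m=0$, then $\overline{g}_{m-2}=a_1a_{m-1}$ gives $a_{m-1}=0$, and so on down to $\overline{g}_0=\sum_i a_i^2=a_1^2=0$, contradicting $a_1\ne 0$. (In the untruncated case this is exactly the vanishing of the autocorrelation $A(t)A(t^{-1})$ of $A(t)=\sum_i a_it^i$, forcing $A\equiv 0$; the partition shape only truncates the sums.) Hence the intersection lies entirely in $\{v_{11}=0\}$, where the minor relation $v_{11}v_{jj}=v_{1j}^2$ forces the whole first row and column to vanish. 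Deleting them and shifting indices down by one turns the remaining data into a specialized Ferrers problem with $\mu'_i=i-1$ and $\la'_i=\la_{i+1}-1$, for which $\la'_{n-1}=\la_n-1\ge n-1$ still holds, so the standing hypotheses are preserved and the restricted diagonals are precisely the $m'=\la_2-1$ diagonals of the smaller tableau; the induction hypothesis finishes the boundary.

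The main obstacle is this last geometric/combinatorial step, and it has two delicate parts: first, the descending autocorrelation argument on the rank-one locus must be run so that the truncation imposed by the shape $\bSlm$ never destroys the single surviving leading term $a_1a_{m-t}$ (this is where $\la_1=m$ and $\la_n\ge n$ are genuinely used); and second, the bookkeeping that the passage $\bSlm\rightsquigarrow\bSwlm$ on the boundary reproduces a problem of exactly the same type with the correct count of diagonals. I would organize the whole induction in parallel with the row-by-row inductions of Theorems~\ref{thm:Minors-Grob} and~\ref{thm:initial-ideals}, which is also the natural setting in which to handle the degenerate case $\la_n=n$ and to compare with the analogous (easier, bipartite) argument behind Theorem~\ref{thm:min-red-Ferrers}.
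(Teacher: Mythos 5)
Your overall framing is sound and coincides with the paper's: identify $\cF(\overline{I}_{\la-\mu})\cong K[\bTlm]/I_2(\bSlm)$ via Theorem~\ref{thm:special-fiber-ring-is-ladder-determinantal}, note that this ring has dimension $m$ by Proposition~\ref{prop:dim-fibers-ring} (as $\mu_1=0\le n-1$), and reduce everything to showing that the $m$ diagonal linear forms are a homogeneous system of parameters; the minimality count via the analytic spread is fine. Your Case $v_{11}\ne 0$ is also correct: since $\mu_1=0$ the first row is complete, all minors through row and column $1$ exist, the point has rank one, and the descending sweep over $k=m-1,\dots,0$ yields a contradiction. The genuine gap is in the boundary case $v_{11}=0$. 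You claim the minors $v_{11}v_{jj}=v_{1j}^2$ kill the whole first row, but this minor exists only when the diagonal position $(j,j)$ lies in $\bSlm$, which happens exactly for $j\le n$; for $n<j\le m$ there is no row $j$ in $\bTlm$, hence no such minor. In fact the minors alone do \emph{not} force $v_{1j}=0$ for $j>n$: for $\la=(3,2)$, $\mu=(0,1)$ the only $2$-minor of $\bSlm$ is $T_{11}T_{22}-T_{12}^2$, and the point $v_{13}=1$, all other $v_{ij}=0$, satisfies every minor. This also makes your induction circular: to invoke the induction hypothesis on the deleted tableau you need the truncated sums $\sum_{i\ge 2}v_{i,i+k}$ to vanish, which you only know once $v_{1,1+k}=0$ for \emph{all} $k$ --- precisely the missing statement.

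The gap is repairable, but only by using the diagonal equations in the boundary case as well: take $j^*$ maximal with $v_{1,j^*}\ne 0$ (necessarily $j^*>n$ after your step for $j\le n$); for each $i\ge 2$ with $(i,i+j^*-1)\in\bTlm$, the minor on rows $\{1,i\}$ and columns $\{j^*,\,i+j^*-1\}$ exists and gives $v_{1,j^*}v_{i,i+j^*-1}=v_{1,i+j^*-1}v_{i,j^*}=0$ by maximality of $j^*$, so $\overline{g}_{j^*-1}=v_{1,j^*}\ne 0$, contradicting $\overline{g}_{j^*-1}=0$; then the first row and column vanish and your induction goes through. For comparison, the paper sidesteps the zero-locus analysis entirely: Lemma~\ref{lem:in-genFerrers-red} constructs, for every variable $T_{ij}$, an explicit element of $I_2(\bSlm)+L$ with leading term $T_{ij}^j$ in a reverse-lexicographic order, so the radical of $I_2(\bSlm)+L$ is the maximal ideal and the diagonals form a system of parameters; then [HS, Proposition 8.2.4] finishes. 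Your Nullstellensatz route is a legitimately different and more geometric alternative, but as written it fails at its crucial step.
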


\begin{figure}[h!] 
\includegraphics{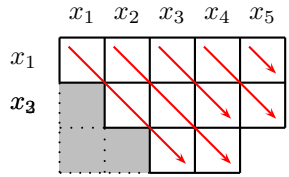}
\caption{An illustration of Theorem~\ref{thm:min-red-specialized-Ferr} for $\lambda=(5,5,4)$.}     \label{fig:exMinRedSpec}
\end{figure}

The proofs of the last two statements are based on results about certain initial ideals.

\begin{Lemma}
   \label{lem:in-Ferrers-red}
Let $I_2 (\bT_{\la}) \subset K[\bT_{\la}]$ be the ideal generated by the 2-minors of $\bT_{\la}$, and let $L \subset K[\bT_{\la}]$ be the ideal generated by the $m+n$ diagonals  
\[ 
\sum_{i \ge 1} T_{i, k+i} \quad ( \text{for } k = 0,\ldots,m-1) \quad \text{ and } \quad  \sum_{i \ge 1} T_{k+i, i}\quad ( \text{for } k = 1,\ldots,n-1). 
\] Then a power of every variable $T_{ij}$ in $\bT_{\la}$ is in an  initial ideal of the
ideal of $I_2 (\bT_{\la})+L$. More precisely, $T_{ij}^j \in \ini_{\prec}(I_2 (\bT_{\la})+L)$,
where $\prec$ is the reverse-lexicographic term order induced by the
row-ordering on the tableau, that is,
\[
    T_{1,1} > T_{1,2}>\dots >T_{1,\lambda_1} >
    T_{2,1} > \dots > T_{2,\lambda_2} >
    \dots > T_{n,\lambda_n}.
\]
\end{Lemma}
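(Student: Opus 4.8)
The plan is to prove the statement in its precise form: for every cell $(i,j)$ of $\bT_{\la}$ I will exhibit an element of $I_2(\bT_{\la})+L$ whose $\prec$-leading term is exactly $T_{ij}^{\,j}$. Since membership of the leading term in $\ini_{\prec}(I_2(\bT_{\la})+L)$ is what the statement asks for, producing such elements suffices, and it is more robust than trying to show the pure power itself lies in the ideal.

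First I would dispose of the boundary cells using $L$ alone. The generators of $L$ have pairwise coprime $\prec$-leading terms: the column diagonal $\sum_{i\ge 1}T_{i,k+i}$ leads with $T_{1,k+1}$ and the row diagonal $\sum_{i\ge 1}T_{k+i,i}$ leads with $T_{k+1,1}$, and these are $m+(n-1)$ distinct variables, namely all of the first row together with the remainder of the first column. By Buchberger's coprimality criterion the diagonals are therefore already a Gr\"obner basis of $L$, so $\ini_{\prec}(L)$ is generated by these boundary variables and is contained in $\ini_{\prec}(I_2(\bT_{\la})+L)$. This at once gives $T_{1j}^{\,j}\in\ini_{\prec}$ for the whole first row and $T_{i1}=T_{i1}^{1}\in\ini_{\prec}$ for the first column, leaving only the interior cells $(i,j)$ with $i,j\ge 2$.

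For an interior cell I would build the required element recursively, using the diagonal through $(i,j)$ and the $2$-minors to move correction terms in a controlled direction. The two ingredients are: the linear diagonal $D$ through $(i,j)$, which contains $T_{ij}$ together with summands at the neighbouring cells $(i\pm t,\,j\pm t)$; and the $2$-minors, whose $\prec$-leading term is the \emph{anti}-diagonal monomial (the reverse-lex feature: for rows $a<a'$ and columns $b<b'$ the smallest variable $T_{a'b'}$ occurs in the main-diagonal term, so the anti-diagonal term $T_{ab'}T_{a'b}$ leads). Starting from $T_{ij}^{\,j-1}\cdot D$, whose expansion is $T_{ij}^{\,j}$ plus products $T_{ij}^{\,j-1}T_{i\pm t,\,j\pm t}$, the lower-right summands already yield $\prec$-smaller monomials (the smaller variable $T_{i+t,j+t}$ appears with positive exponent), while each upper-left summand is rewritten via the minor identity $T_{i-t,j-t}T_{ij}=T_{i-t,j}T_{i,j-t}+(\text{minor})$. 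Iterating this trade drives the auxiliary factors either strictly to the lower-right, hence onto $\prec$-smaller variables, or up into the first row or first column, where they are absorbed using the Gr\"obner basis of $L$ from the first step; the exponent $j$ arises as the number of columns $1,\dots,j$ consumed in routing corrections to the boundary. Organising this as an induction in increasing $\prec$-order of the variables (treating the lower-right cells first) then lets me invoke at each stage that powers and certain mixed products of the already-treated smaller variables lie in $\ini_{\prec}(I_2(\bT_{\la})+L)$, which is what clears the remaining corrections.

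The hard part will be the reverse-lex bookkeeping needed to guarantee that, after all substitutions, every surviving monomial is strictly $\prec$-smaller than $T_{ij}^{\,j}$, so that $T_{ij}^{\,j}$ really remains the leading term. Two features make this delicate. Because $\prec$ is a (graded) reverse-lex order, a pure power is the \emph{smallest} monomial of its degree among those supported on weakly-larger variables; in particular the global corner variable $T_{n,\la_n}$ is $\prec$-minimal in its degree, so there the statement forces $T_{n,\la_n}^{\,\la_n}$ to lie in $I_2(\bT_{\la})+L$ outright, and one gets this only after the auxiliary product relations — that products of the corner variable with its neighbours already lie in the ideal — have been set up, so the induction must be seeded with care. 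Second, I must verify that the minor-trades and the reductions modulo $L$ never reintroduce a monomial dominating $T_{ij}^{\,j}$; making this monotonicity precise, by tracking the row and column positions of every variable that is produced, is the technical heart of the argument.
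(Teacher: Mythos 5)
Your proposal is correct and takes essentially the same route as the paper's proof: one builds an explicit element of $I_2(\bT_{\la})+L$ with leading term $T_{ij}^{\,j}$ by starting from $T_{ij}^{\,j-1}D_{ij}$, trading each upper-left diagonal term against the $2$-minor whose reverse-lex leading term is the antidiagonal product, pushing the row-$i$ factor left by at least one column per trade so that after $j-1$ trades it must land in column $1$ (exactly the source of the exponent $j$), and then absorbing the resulting term $\pm T_{i-1,j}^{\,j-1}T_{i,1}$ with the diagonal $D_{i,1}\in L$, with the first row and first column disposed of by the linear diagonals alone, just as you do. The only dispensable (and slightly misdirected) piece of your outline is the proposed induction over cells in increasing $\prec$-order: the corrections needing clearance involve variables to the upper-left of $T_{ij}$, i.e.\ $\prec$-\emph{larger} ones, so previously treated lower-right cells supply nothing, and indeed the paper needs no such induction, since the nested sum produced by the trades collapses outright (because $p_1+\cdots+p_{j-1}\ge j-1$ forces all $p_t=1$) and every residual monomial is checked directly to be $\prec T_{ij}^{\,j}$.
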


\begin{proof}
To simplify notation put $Q = I_2 (\bT_{\la})$.
    In what follows,  diagonals and minors with $T_{ij}$ in their support will be used  to construct a polynomial in the ideal $Q+L$
    whose initial term is $T_{ij}^j$. This condition will be satisfied by ensuring that all other terms are divisible
    either by monomials in the initial ideal or
    by variables that are reverse-lexicographically smaller then $T_{ij}$, i.e.\ are to the east or south of $T_{i j}$. Notice that the initial monomial of each 2-minor is the product of the variables on its antidiagonal. 
    \smallskip

{\em Claim:}  For each variable $T_{i j}$ in $\bT_{\la}$, the following polynomial is in $Q + L$:
    \[ \tag{*}\label{final-polynomial-after-replacements}
       \pm \sum_{p_{1}>0}T_{i-p_1,j} \sum_{p_2>0} T_{i-p_2,j} \cdots \sum_{p_{j-2}>0} T_{i-p_{j-2},j}
                \left(\sum_{p_{j-1}>0}  T_{i-p_{j-1},j} T_{i,j-p_1-\cdots-p_{j-1}}  \right) + T_{ij}^j +  L.O.T.
    \]
Here and below we always use the conventions that ``L.O.T." stands for ``lower-order terms" and represents monomials that are $\prec$-smaller than the last monomial listed (i.e., $T_{i j}^j$ above) and the sums only involve variables that are in $\bT_{\la}$. The latter allows avoiding  specifying the upper limits of the summations explicitly, thus greatly simplifying notation.
\smallskip

Let $D_{ij}\in L$ be the diagonal passing through $T_{ij}$, that is,
\[
D_{i j} = \sum_{p > 0} T_{i-p, j-p} + T_{i j} + L.O.T.
\]
Furthermore, let $Q_{i,j;i-p,j-p}\in Q$ be the 2-minor of $\bT_{\la}$ whose diagonal term is $T_{ij}T_{i-p,j-p}$.

Using $T_{1 j}^{j-1} D_{1 j}$, the claim is true if $i = 1$. Let $i > 1$. Then $T_{ij}^{j-1} D_{ij}$ reads as
\[\tag{d1}\label{use-first-diagonal}
        T_{ij}^{j-1} D_{ij} = T_{ij}^{j-1} \sum_{p_1 > 0} T_{i-p_1, j-p_1} + T_{ij}^j + L.O.T.
    \]
Thus, we are done if $j = 1$.  Let $ j > 1$. Continue to  successively modify the above polynomial by replacing variables $T_{k l}$ that are above and strictly to the left of $T_{ij}$  by using the diagonal $D_{k l}$ if $k = i$  and by using the minor $Q_{i, j; k ,l}$ if $k < i$.

Following this strategy,  subtract suitable multiples of the minors $Q_{i,j;i-p_1,j-p_1}$ from the polynomial \eqref{use-first-diagonal} and obtain
\[
      T_{ij}^{j-2}   \sum_{p_1>0} T_{i-p_1,j} T_{i,j-p_1}  + T_{ij}^j  + L.O.T. \in Q + L.
\]

If $j = 2$, this shows the claim. Otherwise,  repeat the process. In order to substitute the variables $T_{i,j-p_1}$, use the diagonals $D_{i,j-p_1} = \sum_{p_2>0} T_{i-p_2,j-p_2-p_1} + T_{i,j-p_1} + L.O.T.$ Subtracting suitable multiples of them provides 
\[
- T_{ij}^{j-2} \sum_{p_1>0} T_{i-p_1,j} \sum_{p_2>0} T_{i-p_2,j-p_2-p_1} + T_{ij}^j + L.O.T. \in Q + L.
\]
Next,  subtract suitable multiples of the minors $Q_{i,j;i-p_2,j-p_2-p_1}$ ( $p_1,p_2>0$), obtaining 
    \[\tag{q1}\label{use-second-minor}
        - T_{ij}^{j-3} \sum_{p_1>0} T_{i-p_1,j}\sum_{p_2>0} T_{i-p_2,j} T_{i,j-p_1-p_2} + T_{ij}^j +  L.O.T. \in Q + L.
    \]
This gives the claim if $ j = 3$.
    In general, repeating the process $j-1$ times provides the following polynomial in $L+Q$:
    \[
       \pm \sum_{p_1>0}T_{i-p_1,j} \sum_{p_2>0} T_{i-p_2,j} \cdots \sum_{p_{j-2}>0} T_{i-p_{j-2},j}
                 \sum_{p_{j-1}>0}  T_{i-p_{j-1},j} T_{i,j-p_1-\cdots-p_{j-1}}  + T_{ij}^j +  L.O.T.
    \]
This establishes the claim in general. 
\smallskip

Finally, observe that $p_1,\dots,p_{j-1}>0$ implies $p_1+\cdots p_{j-1} \ge j-1$. Hence the polynomial \eqref{final-polynomial-after-replacements} can be rewritten as
\[
\pm T_{i-1, j}^{j-1} T_{i 1} + T_{ij}^j +  L.O.T.
\]
Subtracting the appropriate multiple of the diagonal $D_{i,1}$ results in a polynomial whose leading terms is $T_{i j}^j$. This completes our argument.
\end{proof}

For a strongly stable specialized Ferrers ideal, an analogous result holds. 

\begin{Lemma}
   \label{lem:in-genFerrers-red}
Let $I_2 (\bSlm) \subset K[\bT_{\la - \mu}]$ be the ideal generated by the 2-minors of $\bSlm$, where $\mu = (0,1,\ldots,n-1) \in \ZZ^n$,  and let $L \subset K[\bTlm]$ be the ideal generated by the $m$ diagonals  $\sum_{i \ge 1} T_{i, k+i}$ $(k = 0,\ldots,m-1)$. Then a power of every variable $T_{ij}$ in $\bTlm$ is in an initial ideal of the
ideal of $I_2 (\bSlm)+L$. More precisely, $T_{ij}^j \in \ini_{\prec}(I_2 (\bSlm)+L)$,
where $\prec$ is the reverse-lexicographic term order induced by the
row-ordering on the tableau, that is,
\[
    T_{1,\mu_1+1} > T_{1,\mu_1+2}>\dots >T_{1,\lambda_1} >
    T_{2,\mu_2+1} > \dots > T_{2,\lambda_2} >
    \dots > T_{n,\lambda_n}.
\]
\end{Lemma}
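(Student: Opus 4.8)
The plan is to mirror the construction in Lemma \ref{lem:in-Ferrers-red}: for each variable $T_{ij}$ of $\bTlm$ I would produce, inside $I_2(\bSlm)+L$, a polynomial whose $\prec$-leading monomial is $T_{ij}^{j}$, starting from $T_{ij}^{j-1}$ times the diagonal through $(i,j)$ and then repeatedly subtracting multiples of $2$-minors of $\bSlm$ and of diagonals in $L$ until every monomial other than $T_{ij}^{j}$ has been pushed below it in the order $\prec$. Two features of the strongly stable case $\mu=(0,1,\dots,n-1)$ supply the needed ingredients. First, every cell of $\bTlm$ satisfies $i\le j$, so the exponent we must reach is again the column index $j$, and the diagonal through $(i,j)$ is the offset-$(j-i)$ diagonal $\sum_{a\ge 1}T_{a,a+(j-i)}$, which is exactly one of the $m$ generators of $L$. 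Second — and this is what makes the symmetric situation tractable at all — since $\bSlm$ is obtained by reflection, a diagonal or a $2$-minor meeting the tableau below the main diagonal equals, as a polynomial in the $T$'s, its reflection in the upper half; hence $L$ already contains every diagonal of the symmetric grid, and $I_2(\bSlm)$ contains every minor we could want on either side of the diagonal.

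Carrying this out, I would first clear the north-west terms $T_{ij}^{j-1}T_{i-p,j-p}$ produced by the opening diagonal, by subtracting multiples of the minors with diagonal term $T_{ij}T_{i-p,j-p}$ (whose $\prec$-initial monomial is the antidiagonal, as in Lemma \ref{lem:in-Ferrers-red}), then clear the row-$i$ entries thus created using their diagonals in $L$, and iterate $j-1$ times. A clean shortcut is available whenever $i<j\le n$: the symmetric minor $T_{ii}T_{jj}-T_{ij}^{2}$ lies in $I_2(\bSlm)$ and, since $T_{ii}\succ T_{ij}\succ T_{jj}$, its $\prec$-initial monomial is exactly $T_{ij}^{2}$; thus $T_{ij}^{2}\in\ini_{\prec}(I_2(\bSlm))\subseteq\ini_{\prec}(I_2(\bSlm)+L)$, and $T_{ij}^{j}$ follows. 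The first row ($i=1$) is immediate from $T_{1j}^{j-1}\cdot\bigl(\sum_{a\ge 1}T_{a,a+(j-1)}\bigr)$, whose only non-leading terms lie strictly to the south.

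The hard part will be the interplay between the reverse-lexicographic order, which is governed by the row index and is therefore \emph{not} symmetric, and the tableau $\bSlm$, which is. As the reductions proceed the column index of the entries produced decreases and eventually drops below $i$, so the process crosses the main diagonal; an entry arising there has reflected (canonical) name $T_{c,i}$ with $c<i$, which sits in a small row and so is $\prec$-\emph{larger} than $T_{ij}$, rather than smaller as the corresponding entry would be in the genuinely asymmetric Ferrers tableau of Lemma \ref{lem:in-Ferrers-red}. The crux is therefore to check that every monomial still exceeding $T_{ij}^{j}$ can nonetheless be rewritten, via a diagonal of $L$ or a minor of $\bSlm$, into monomials divisible by an entry in a strictly larger row (equivalently, reflecting to such an entry), so that the reduction terminates with leading monomial $T_{ij}^{j}$. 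I expect this to require a position-by-position case analysis of the revlex comparisons, precisely the bookkeeping that dominates the proof of Theorem \ref{thm:Minors-Grob}; the cases $j>n$ and the diagonal variables $T_{ii}$, where the symmetric-minor shortcut is unavailable and the diagonals of $L$ must do all the work, are where this analysis is genuinely needed.
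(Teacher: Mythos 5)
Your overall plan coincides with the paper's own proof, which consists of a single sentence declaring the argument ``completely analogous'' to that of Lemma \ref{lem:in-Ferrers-red} --- exactly the reduction scheme you propose to mirror. Your shortcut for the variables $T_{ij}$ with $i<j\le n$ is correct and is in fact sharper than what that recursion gives: since $\mu_a=a-1$ and $\la_a\ge\la_n\ge n$ for all $a$, the four positions $(i,i),(i,j),(j,i),(j,j)$ all lie in $\bSlm$, so $T_{ii}T_{jj}-T_{ij}^2\in I_2(\bSlm)$; and since $T_{jj}$ is the $\prec$-smallest variable involved and divides $T_{ii}T_{jj}$ but not $T_{ij}^2$, reverse lex makes $T_{ij}^2$ the initial term, so $T_{ij}^2\in\ini_{\prec}(I_2(\bSlm))$ and $T_{ij}^j$ follows without using $L$ at all. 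The row $i=1$ is also handled correctly.

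The gap is that for the diagonal variables $T_{ii}$ with $2\le i\le n$ and for the variables $T_{ij}$ with $j>n$ you only announce a case analysis, and these are precisely the cases not literally covered by Lemma \ref{lem:in-Ferrers-red}. In that lemma the reduction terminates because the row-$i$ entry is eventually pushed to column $1$ and killed by the lower diagonal $\sum_{a\ge 1}T_{i-1+a,a}$, whose remaining entries all lie in rows $>i$ and are therefore negligible. Here the lower diagonals are absent from $L$, and once the column index drops below $i$ the entry at position $(i,c)$ is the reflected variable $T_{c,i}$; the only generator of $L$ available to eliminate it is the offset-$(i-c)$ diagonal $\sum_{a\ge 1}T_{a,a+i-c}$, whose side terms include variables in rows $<i$ and hence are \emph{not} negligible. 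So the Ferrers termination mechanism genuinely breaks, as you suspected (one detail in your diagnosis is off: in the Ferrers case the entries $T_{i,c}$ with $c<j$ are also $\prec$-larger than $T_{ij}$; what changes is that the diagonal used to remove them no longer produces only negligible terms). The reduction can still be closed up, but it needs extra rounds that re-use the upper diagonals and reflected minors; for instance, for $\la=(3,3)$ and $\mu=(0,1)$, writing $D_k$ for the offset-$k$ diagonal $\sum_{a\ge 1}T_{a,a+k}$, one has the exact identity
\[
T_{23}^3 \;=\; T_{23}^2D_1-T_{23}\bigl(T_{12}T_{23}-T_{13}T_{22}\bigr)-T_{13}T_{23}D_0+T_{13}\bigl(T_{11}T_{23}-T_{12}T_{13}\bigr)+T_{13}^2D_1-T_{13}T_{23}D_2,
\]
which has no counterpart in Lemma \ref{lem:in-Ferrers-red}. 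As a standalone proof your proposal is therefore incomplete exactly where new work is required --- although, to be fair, this is the same work that the paper's one-sentence proof leaves to the reader.
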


\begin{proof}
The proof is completely analogous to the argument used to establish Lemma \ref{lem:in-Ferrers-red}.
\end{proof}

The main results of this section follow now easily.

\begin{proof}[Proof of Theorem \ref{thm:min-red-Ferrers}]
Lemma \ref{lem:in-Ferrers-red} shows that the radical of the ideal $I_2 (\bT_{\la}) + L$ is generated by the variables in $\bT_{\la}$. Since the special fiber  ring $\mathcal F( I_{\lambda})$  of $I_{\la}$ has dimension $m+n$ by \cite{SVV} and is isomorphic to $K[\bT_{\la}]/I_2 (\bT_{\la})$ by \cite[Proposition 5.1]{CorsoNagel}, it follows that the diagonals generating  $L$ form a system of parameters of $\mathcal F( I_{\lambda})$. Hence, the claim follows (see, e.g., \cite[Proposition 8.2.4]{HS}).
\end{proof}

Analogous arguments, using Lemma \ref{lem:in-genFerrers-red} and Theorem \ref{thm:special-fiber-ring-is-ladder-determinantal}, provide the {\em Proof of Theorem \ref{thm:min-red-specialized-Ferr}}. In the interest of space, the details are omitted.

\begin{Remark}\label{rmk:OtherMinRed}
(i) Smith  used Theorem \ref{thm:min-red-specialized-Ferr} to compute the core of certain Ferrers ideals, that is, the intersection over all minimal reductions of such a Ferrers ideal (see \cite[Theorem 5.1]{Smith}).

(ii)
It would be desirable to extend Theorem \ref{thm:min-red-specialized-Ferr}, that is, to find a distinguished minimal reduction of other specialized Ferrers ideals. Notice that the diagonals in the tableau $\bTlm$ do not generate a minimal reduction of $\overline{I}_{\la - \mu}$ in general. In fact, if $\mu_1 \ge 1$, then the number of diagonals is less than the number of generators of any minimal reduction of $\overline{I}_{\la - \mu}$.
\end{Remark}

\begin{Example} \label{ex:min-red}
Consider the specialized Ferrers ideals associated to $\la = (4,4,4)$ and $\mu = (1, 2, 3)$.
It is
\[
\overline{I}_{\la - \mu} = (x_1 x_2, x_1 x_3, x_1 x_4, x_2 x_3, x_2, x_2 x_4, x_3 x_4) \subset K[x_1, x_2, x_3, x_4].
\]
According to Proposition \ref{eq:dim-fibers-ring}, its special fiber ring has dimension four. Thus, every minimal reduction of $\overline{I}_{\la - \mu}$ has four minimal generators.

\begin{figure}[h!]
\includegraphics{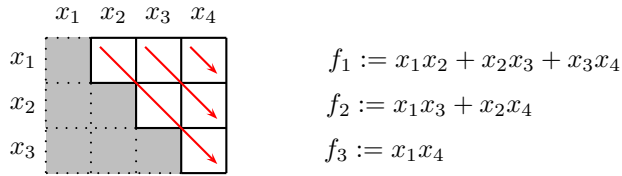}
\caption{Illustration of Example~\ref{ex:min-red}.}
\end{figure}
Since the tableau $\bTlm$ has only three diagonals,  another generator is needed!  Indeed, one can check that the three diagonals together with the polynomial 
\[
x_1 x_2 +  x_1 x_3 + x_1 x_4 +  x_2 x_4 + x_3 x_4
\]
generate a minimal reduction of $\overline{I}_{\la - \mu}$.
\end{Example}

This and other examples suggest that each specialized Ferrers ideal $\overline{I}_{\la - \mu}$ has a minimal reduction consisting of the diagonals in $\bTlm$ and suitably many additional generators. However, we have not been able to find combinatorial descriptions for the needed additional generators.


\section{Hilbert functions and reduction numbers} 
\label{sec:numbers}

We now determine the Hilbert function of the determinantal rings introduced in Section \ref{sec:symmetric-ladders-G-bases}. This allows us to find their Castelnuovo-Mumford regularity. We then show that this regularity gives the reduction number of the Dedekind-Mertens-like reductions we established in the previous section. We conclude with some examples to illustrate our results. 
 
 In order to compute the Hilbert series of the special fiber rings of the specialized Ferrers ideals, we first establish a recursive formula using the Gorenstein liaison results proven in Section \ref{sec:symmetric-ladders-invariants}. This is similar to the approach used in \cite{CorsoNagel}.

Recall from the previous section that
\[
{\mathcal
F}(\overline{I}_{\lambda - \mu}) \cong K[\overline{I}_{\lambda - \mu}] \cong K[\bTlm]/I_2 (\bSlm)
\]
has dimension $m + \max \{0, n-1-\mu_1\}$.  Hence there is a  unique
polynomial $\overline{p}_{\lambda - \mu} \in \ZZ[t]$ such that the
Hilbert series of $K[\overline{I}_{\lambda - \mu}]$ can be written
as
\[
H_{K[\overline{I}_{\lambda - \mu}]} (t) = \frac{\overline{p}_{\lambda - \mu} (t)}{(1-t)^{m + \max \{0, n-1-\mu_1\}}}
\]
and $e (K[\overline{I}_{\lambda - \mu}]) = \overline{p}_{\lambda -
\mu} (1) > 0$ is the multiplicity of $K[\overline{I}_{\lambda -
\mu}]$. The polynomial $\overline{p}_{\lambda - \mu}$ is called the normalized numerator of the Hilbert series. 
Using this notation allows us to state  the desired recursion
formula.  We continue to use the notation and assumptions introduced
at the beginning of Section  \ref{sec:symmetric-ladders-G-bases}.

\begin{Lemma}
   \label{lem:liai-recursion}
Given $\la = (\la_1,\ldots,\la_{n-1}, \la_n) \in \ZZ^n$ and $\mu = (\mu_1,\ldots,\mu_n) \in \ZZ^n$,  set
\[
\widetilde{\la} := (\la_1,\ldots,\la_{n-1}, \la_n - 1) \in \ZZ^n,
\]
\[
\la' := (\la_1 - (\la_n - \mu_n -1),\ldots,\la_{n-1} - (\la_n - \mu_n -1)) \in \ZZ^{n-1},
\]
and
\[
\mu' := (\mu_1',\ldots,\mu_{n-1}') \in \ZZ^{n-1}, \; \text{ where }
\mu_i' := \max \{n-1, \mu_i\}.
\]
If $n \geq 2$, then there is the
following relation among Hilbert series$:$

\begin{equation*}
\overline{p}_{\lambda - \mu} (t)  = \begin{cases} 
\overline{p}_{\widetilde{\lambda} - \mu}(t)  & \text{if } \lambda_2 = \mu_2 + 1 \text{ and } \mu_1 \ge n-1 \\
 \overline{p}_{\widetilde{\lambda} - \mu}(t)  + t \cdot \overline{p}_{\lambda' - \mu'} (t) & \text{otherwise}.
 \end{cases}
\end{equation*}
\end{Lemma}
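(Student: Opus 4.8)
The plan is to derive the recursion from the basic double link already constructed in the proof of Theorem~\ref{thm:initial-ideals}(b), translated into an identity of Hilbert series, splitting according to whether the variable ideal $\fb$ appearing there vanishes. Recall from that proof that $\fb = \fa + \fb''$ with $\fa = (T_{ij} \s i<n,\ \mu_n<j<\la_n)$ and $\fb'' = (T_{ij} \s i<n,\ \mu_i<j<n)$, so $\fb = 0$ exactly when $\la_n = \mu_n+1$ and $\mu_1\ge n-1$; this is the governing case distinction. Since passing to an initial ideal preserves Hilbert functions, I would work with $\ini(I_2(\bSlm))$ and use the decomposition
\[
\ini(I_2(\bSlm)) = T_{n,\la_n}\,\fb' + \ini(I_2(\bSwlm)), \qquad \fb' = \fb + \ini(I_2(\bSwlm)),
\]
in which $T_{n,\la_n}$ is a nonzerodivisor modulo $\ini(I_2(\bSwlm))$.

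When $\fb\neq 0$, this presents $\ini(I_2(\bSlm))$ as a genuine basic double link of $\fb'$ with linking form of degree $1$, and summing the identity of Proposition~\ref{prop:bdl} over all degrees gives, with $R = K[\bTlm]$,
\[
H_{R/\ini(I_2(\bSlm))}(t) = t\, H_{R/\fb'}(t) + (1-t)\, H_{R/\ini(I_2(\bSwlm))}(t).
\]
I would then identify the two series on the right. As $T_{n,\la_n}$ is a free variable over $K[\bTwlm]$ absent from $\ini(I_2(\bSwlm))$, we have $H_{R/\ini(I_2(\bSwlm))}(t) = (1-t)^{-1}H_{K[\bTwlm]/I_2(\bSwlm)}(t)$, contributing the numerator $\overline p_{\widetilde{\la}-\mu}(t)$. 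For $\fb'$, the isomorphism in the proof of Theorem~\ref{thm:initial-ideals}(b) identifies it, up to adjoining $\hht\fb$ free variables, with $\ini(I_2(\bT_{\la'-\mu'}))$; since $\mu_i' = \max\{n-1,\mu_i\}\ge n-1$ forces $\bS_{\la'-\mu'}=\bT_{\la'-\mu'}$ by Remark~\ref{rem:includes-Ferrers-graphs}, this contributes exactly $\overline p_{\la'-\mu'}(t)$.

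The crux, and the step I expect to require the most care, is the dimension bookkeeping needed to align the three denominators. Writing each ring with the power of $(1-t)$ equal to its Krull dimension, I would verify, using the height formula of Theorem~\ref{thm:initial-ideals}(b), the dimension formula of Proposition~\ref{prop:dim-fibers-ring}, and the identity $\max\{0,n-1-\mu_i\}-\max\{n-1,\mu_i\}=-\mu_i$, that $\dim R/\fb' = \dim R/\ini(I_2(\bSlm)) =: D$ while $\dim R/\ini(I_2(\bSwlm)) = D+1$. The adjoined variables then make $H_{R/\fb'}(t) = \overline p_{\la'-\mu'}(t)/(1-t)^{D}$ and $H_{R/\ini(I_2(\bSwlm))}(t) = \overline p_{\widetilde{\la}-\mu}(t)/(1-t)^{D+1}$. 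Substituting into the displayed identity and clearing $(1-t)^D$ yields $\overline p_{\la-\mu}(t) = \overline p_{\widetilde{\la}-\mu}(t) + t\,\overline p_{\la'-\mu'}(t)$, which is the second branch.

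Finally I would treat the degenerate case $\fb = 0$, where the link collapses: here $T_{n,\la_n}\fb' = T_{n,\la_n}\,\ini(I_2(\bSwlm))\subset\ini(I_2(\bSwlm))$, so the decomposition reduces to $\ini(I_2(\bSlm)) = \ini(I_2(\bSwlm))$. In this situation row $n$ of $\bTwlm$ is empty, so $\dim K[\bTwlm]/I_2(\bSwlm) = D-1$, and the free variable $T_{n,\la_n}$ is exactly absorbed by this drop in dimension:
\[
H_{R/\ini(I_2(\bSlm))}(t) = (1-t)^{-1}\,\frac{\overline p_{\widetilde{\la}-\mu}(t)}{(1-t)^{D-1}} = \frac{\overline p_{\widetilde{\la}-\mu}(t)}{(1-t)^{D}},
\]
which gives $\overline p_{\la-\mu}(t) = \overline p_{\widetilde{\la}-\mu}(t)$, the first branch. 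Every configuration not satisfying $\la_n = \mu_n+1$ and $\mu_1\ge n-1$ has $\fb\neq 0$ and is covered by the second branch.
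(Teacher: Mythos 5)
Your overall strategy is indeed the paper's: extract the decomposition $\ini(I_2(\bSlm)) = T_{n,\la_n}\,\fb' + \ini(I_2(\bSwlm))$ from the proof of Theorem~\ref{thm:initial-ideals}(b) and convert it into a Hilbert series identity via Proposition~\ref{prop:bdl}. However, your case split is not the lemma's, and this is a genuine gap. You split according to $\fb = 0$, i.e.\ $\la_n = \mu_n+1$ and $\mu_1 \ge n-1$; the lemma and the paper's proof split according to $\hht\fb' = 0$ (equivalently, $I_2(\bSlm) = 0$), i.e.\ $\la_2 = \mu_2+1$ and $\mu_1 \ge n-1$, which is strictly stronger because $\fb' = \fb + \ini(I_2(\bSwlm))$. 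The two conditions disagree exactly when $\la_n = \mu_n+1$, $\mu_1 \ge n-1$, but $\la_2 > \mu_2+1$; there your argument produces $\overline p_{\la-\mu} = \overline p_{\widetilde\la-\mu}$, while the lemma asserts $\overline p_{\la-\mu} = \overline p_{\widetilde\la-\mu} + t\,\overline p_{\la'-\mu'}$, and these conflict since $\overline p_{\la'-\mu'}(0)=1$. Concretely, take $n=3$, $\la=(5,4,3)$, $\mu=(2,2,2)$: then $\fb = 0$ but $I_2(\bSlm) = (T_{13}T_{24}-T_{14}T_{23}) \neq 0$; both $\cF(\overline I_{\la-\mu})$ and $\cF(\overline I_{\widetilde\la-\mu})$ are quadric hypersurface rings (of dimensions $5$ and $4$), so $\overline p_{\la-\mu} = \overline p_{\widetilde\la-\mu} = \overline p_{\la'-\mu'} = 1+t$; your formula gives $1+t$, whereas the lemma's second branch gives $1+2t+t^2$. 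So, as written, you have not proved the statement you were asked to prove.

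What makes this worth dwelling on is that your computation in the degenerate case is correct, and it exposes a boundary defect in the lemma itself (under the paper's normalization, where the denominator exponent must be the Krull dimension because $\overline p_{\la-\mu}(1)$ is required to equal the multiplicity). In the regime above, the paper's own proof applies Proposition~\ref{prop:bdl} even though its height hypothesis fails: there $\fb' = \ini(I_2(\bSwlm))$, so the two heights are equal rather than differing by one, and moreover row $n$ of $\bTwlm$ is empty, so $\dim\cF(\overline I_{\widetilde\la-\mu}) = d-1$ instead of $d$, which invalidates the denominator $(1-t)^{d+1}$ in the paper's final display. The paper never pays for this because it only invokes the recursion's second branch when $\mu_1 \le n-2$ (proof of Theorem~\ref{thm:Hilb-series}), where your split and the lemma's coincide and the basic double link is genuine. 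The constructive fix is either to restrict the second branch to $\mu_1 \le n-2$ (all the paper needs) or to state the lemma with your case split --- but then you must say explicitly that you are proving this corrected version rather than the stated one, and you should also spell out the verification you deferred: that when $\fb \neq 0$ the equalities $\hht\fb' = \hht\ini(I_2(\bSwlm))+1$ and $\dim K[\bTlm]/\ini(I_2(\bSwlm)) = D+1$ survive the case $\la_n-\mu_n = 1$, $\mu_1 \le n-2$, where the formulas of Theorem~\ref{thm:initial-ideals}(b) and Proposition~\ref{prop:dim-fibers-ring} may only be applied after reinterpreting $\bTwlm$ as an $(n-1)$-row tableau.
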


\begin{proof}
In the proof of Theorem \ref{thm:initial-ideals} (see Equation \eqref{eq:symm-bdl}) we have shown that
\[
\ini (I_2(\bSlm)) = T_{n, \la_n} \fb' + \ini (I_2(\bSwlm)).
\]
Observe that the height of  $\fb'$ is zero if and only if $\lambda_2 = \mu_2 + 1$ and $\mu_1 \ge n-1$. (This follows from the computation at the end of the proof of Theorem \ref{thm:initial-ideals}). This implies the claim in this case since Hilbert functions do not change when passing to the  initial
ideal. 

In the other case, where $\lambda_2 \ge  \mu_2 + 2$ or $\mu_1 \le n-2$, 
apply Proposition \ref{prop:bdl} to conclude that
\[
H_{\cF(\overline{I}_{\lambda - \mu})} (t) = (1-t) \cdot H_{K[\bTlm]/I_2 (\bSwlm)} (t)
      + t \cdot H_{K[\bTlm]/\fb'} (t).
\]
In the proof of Theorem \ref{thm:initial-ideals} we also showed that
$K[\bTlm]/I_2 (\bSwlm) \cong \cF(\overline{I}_{\widetilde{\lambda} -
\mu}) [T_{n, \la_n}]$ and that $K[\bTlm]/\fb'$ has the same Hilbert
series as a polynomial ring over $\cF(\overline{I}_{\lambda' -
\mu'}) = \cF(I_{\lambda' -
\mu'})$. Setting for simplicity $d := \dim K[\overline{I}_{\lambda -
\mu}]$, it follows that
\[
\frac{\overline{p}_{\lambda - \mu}(t)}{(1-t)^d} =
  (1 - t) \cdot \frac{\overline{p}_{\widetilde{\lambda} - \mu}(t)}{(1-t)^{d+1}} +
  t \cdot \frac{\overline{p}_{\lambda' - \mu'}(t)}{(1-t)^d},
\]
which proves our claim.
\end{proof}

In order to compare the results for the special fiber rings of
Ferrers  ideals and their specializations, recall that $\cF(I_{\lambda
- \mu})$ has dimension $m+n-1$. Hence its Hilbert series can be written
as
\[
H_{\cF(I_{\lambda - \mu})} (t) = \frac{p_{\lambda - \mu} (t)}{(1-t)^{m+n-1}}
\]
and $e (\cF(I_{\lambda - \mu})) = p_{\lambda - \mu} (1) > 0$ is the
multiplicity of $K[I_{\lambda - \mu}]$. Again, refer to $p_{\lambda - \mu}$ as the normalized numerator of the Hilbert series. 

We are now ready to derive an explicit formula for the Hilbert series.
Observe that all terms are non-negative. Notice also that in case $n
=1$, the special fiber rings are just polynomial rings over the field  $K$.

\begin{Theorem}
  \label{thm:Hilb-series}
Assume $n \ge 2$. Then:
\begin{itemize}
  \item[(a)] The normalized numerator of the  Hilbert series of
$\cF(I_{\lambda - \mu})$ is:
\[
p_{\lambda} (t) = 1 + h_1 (\lambda - \mu) \cdot t + \cdots + h_{n-1}
(\lambda - \mu) \cdot  t^{n-1},
\]
where
\begin{equation*} 
h_1 (\lambda - \mu ) = \sum_{j=2}^n (\lambda_j -\mu_j - 1)
\end{equation*}
and
\begin{equation*} 
h_k (\lambda - \mu) = \sum_{2 \leq i_1 < i_2 < \ldots < i_k \leq n} \
\sum_{\split{j_{k-1} =  \lambda_{i_1} - \mu_{i_1}}{ - \lambda_{i_k} + \mu_{i_k}  - k +
2}}^{\lambda_{i_1} - \mu_{i_1} - k} \
\sum_{\split{j_{k-2} =  \lambda_{i_1} - \mu_{i_1}}{ - \lambda_{i_{k-1}} + \mu_{i_{k-1}} - k + 3}}^{j_{k-1}} \ldots
\sum_{\split{j_1 = \lambda_{i_1} - \mu_{i_1}}{ - \lambda_{i_{2}} + \mu_{i_2}}}^{j_{2}} j_1,
\end{equation*}
if $k \geq 2$.

  \item[(b)] The normalized numerator of the  Hilbert series of
$\cF(\overline{I}_{\lambda - \mu})$ is:
\[
\overline{p}_{\lambda - \mu} (t) = 1 + \overline{h_1} (\lambda - \mu) \cdot t + \cdots + \overline{h}_{n-1} (\lambda - \mu) \cdot  t^{n-1},
\]
where
\begin{equation*} 
\overline{h}_1 (\lambda - \mu ) = \max \{0, n-1-\mu_1 \} + \sum_{j=2}^n (\lambda_j -\mu_j - 1), 
\end{equation*}
\begin{equation*}
\sigma_j = \begin{cases}
1 & \text{if } j > 0\\
0 & \text{if } j \le 0. 
\end{cases}, 
\end{equation*}
and
\begin{eqnarray*} 
\lefteqn{ \overline{h}_k (\lambda - \mu) }\\
& = & \sum_{2 \leq i_1 < i_2 < \ldots < i_k \leq n} \
\sum_{\split{j_{k-1} =  \lambda_{i_1} - \max\{i_k - 1, \mu_{i_1}\}}{ - \lambda_{i_k} + \max\{i_k - 1, \mu_{i_k}\}  - k + 2}}^{\split{\lambda_{i_1} - \max\{i_k - 1, \mu_{i_1}\}}{ - k + \sigma_{i_k - 1 - \mu_1}}} \
\sum_{\split{j_{k-2} =  \lambda_{i_1} - \max\{i_k - 1, \mu_{i_1}\}}{ - \lambda_{i_{k-1}} + \max\{i_k - 1, \mu_{i_{k-1}} \} - k + 3}}^{j_{k-1}} \ldots
\sum_{\split{j_1 = \lambda_{i_1} - \max\{i_k - 1, \mu_{i_1}\}}{ - \lambda_{i_{2}} + \max\{i_k - 1, \mu_{i_2}\}}}^{j_{2}} j_1,
\end{eqnarray*}
if $k \geq 2$.
\end{itemize}
\end{Theorem}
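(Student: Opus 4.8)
The plan is to prove both parts by a double induction, on the number of rows $n$ and, for fixed $n$, on $\la_n - \mu_n$, checking in each case that the proposed closed forms satisfy a basic double link recursion. Part (b) already has such a recursion in Lemma~\ref{lem:liai-recursion}; for part (a) I would first record the analogous statement for the non-symmetric tableau. Applying Proposition~\ref{prop:bdl} to the basic double link
\[
\ini(I_2(\bTlm)) = T_{n, \la_n}\, \fa' + \ini(I_2(\bTwlm))
\]
of Equation~\eqref{eq:bdl-ladder} (here the polynomial $f = T_{n,\la_n}$ has degree one) and using that Hilbert series are preserved under passage to the initial ideal, one obtains, after cancelling the common factor $(1-t)^{m+n-1}$,
\[
p_{\la - \mu}(t) = p_{\widetilde{\la} - \mu}(t) + t\cdot p_{\la' - \mu'}(t),
\]
where $\widetilde{\la} = (\la_1,\dots,\la_{n-1},\la_n-1)$, $\la' = (\la_1 - (\la_n-\mu_n-1),\dots,\la_{n-1}-(\la_n-\mu_n-1))$ and $\mu' = (\mu_1,\dots,\mu_{n-1})$; when $\la_n - \mu_n = 1$ the last row disappears and this reduces to the $(n-1)$-row case. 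In terms of coefficients this is $h_k(\la-\mu) = h_k(\widetilde\la - \mu) + h_{k-1}(\la'-\mu')$, with the convention $h_0 = 1$.

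For part (a) I would verify this coefficient identity directly from the explicit formula. Put $a_i := \la_i - \mu_i$, and note that the nested summation indexed by a set $2 \le i_1 < \dots < i_k \le n$ depends only on $a_{i_1},\dots,a_{i_k}$, while $\la$ and $\widetilde\la$ differ only in $a_n$. Thus $h_k(\la-\mu) - h_k(\widetilde\la - \mu)$ collects precisely the index sets with $i_k = n$, and for each such set it equals the single slice of the outermost summation that is discarded when $a_n$ decreases by one (its lower limit $a_{i_1} - a_{i_k} - k + 2$ rises by one). The key observation is that in passing to $\la' - \mu'$ each $a_i$ is replaced by $a_i - (a_n - 1)$, so that all the differences $a_{i_1} - a_{i_j}$, and hence the whole inner sum, are unchanged; a direct comparison of the summation limits then identifies this discarded slice with the $\{i_1,\dots,i_{k-1}\}$-summand of $h_{k-1}(\la'-\mu')$. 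Summing over all index sets yields the recursion, and the cases $k=1$ and $n \le 2$ are immediate.

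Part (b) I would then reduce to part (a). Since $\mu_1' = \max\{n-1,\mu_1\} \ge n-1$ equals the number of rows of $\la'-\mu'$, Remark~\ref{rem:includes-Ferrers-graphs} gives $I_2(\bS_{\la'-\mu'}) = I_2(\bT_{\la'-\mu'})$ and hence $\overline{p}_{\la'-\mu'} = p_{\la'-\mu'}$. Thus the recursion of Lemma~\ref{lem:liai-recursion} reads $\overline{p}_{\la-\mu} = \overline{p}_{\widetilde\la-\mu} + t\cdot p_{\la'-\mu'}$ in its generic case and $\overline{p}_{\la-\mu} = \overline{p}_{\widetilde\la-\mu}$ in the degenerate case $\la_2 = \mu_2 + 1$, $\mu_1 \ge n-1$ (which forces $\la_n - \mu_n = 1$, i.e. the deletion of row $n$). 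Substituting the formula of part (a) into the term $t\, p_{\la'-\mu'}$, the argument is the same slice comparison as before, except that now one must track how the threshold $\max\{n-1, \mu_i\}$ introduced by $\mu'$ accumulates, under iterated row deletions, into the threshold $\max\{i_k-1, \mu_i\}$ that appears in $\overline h_k$, and how the extra summand $\max\{0, n-1-\mu_1\}$ of $\overline h_1$ is carried along.

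I expect the genuine difficulty to lie precisely in this last bookkeeping. One must show that the upper limit of the outermost summation in $\overline h_k$ acquires its correction term $\sigma_{i_k - 1 - \mu_1}$ according to whether $i_k - 1 > \mu_1$ or $i_k - 1 \le \mu_1$, that is, according to whether the corner being removed lies outside or inside the already-specialized region, and that in the degenerate case of Lemma~\ref{lem:liai-recursion} every $i_k = n$ slice vanishes so that the formula is genuinely unchanged under $\la_n \mapsto \la_n - 1$. Keeping track of when the nested summations become empty, so that a coefficient vanishes in accordance with the degree bound $\deg \overline p_{\la-\mu} \le n-1$, is what converts the clean structural recursion into the stated explicit formulas; I would expect this to be the most error-prone and lengthy portion of the write-up.
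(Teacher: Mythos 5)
Your proposal follows essentially the same route as the paper: the paper likewise proves the theorem by feeding the closed formulas into the basic-double-link recursion $\overline{h}_k(\lambda-\mu) = \overline{h}_k(\widetilde{\lambda}-\mu) + \overline{h}_{k-1}(\lambda'-\mu')$ of Lemma \ref{lem:liai-recursion} (part (a) being the ``similar but easier'' analogue), inducting on $n$, then on $k$, then on $\lambda_n-\mu_n$, and the merging of nested summations you call a slice comparison is exactly the paper's key step, in which the $i_k=n$ portion of $\overline{h}_k(\widetilde{\lambda}-\mu)$ absorbs $\overline{h}_{k-1}(\lambda'-\mu')$ as the missing bottom slice of the outermost sum (using $\sigma_{n-1-\mu_1}=1$ when $\mu_1\le n-2$, the case $\mu_1\ge n-1$ being dispatched first because there (b) collapses to (a)). The bookkeeping you defer is indeed where the paper's proof spends its effort, and your outline of it --- the $\sigma$ correction, the degenerate case of Lemma \ref{lem:liai-recursion} forcing $\lambda_n-\mu_n=1$, and the identification of $\overline{h}_{k-1}(\lambda'-\mu')$ with a part (a)-type formula because $\mu_1'\ge n-1$ --- matches the paper's execution.
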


\begin{proof}
The proof of Claim (a) is similar and only easier than the one of
Claim (b) (see also Theorem 5.4 in \cite{CorsoNagel}). We
restrict ourselves to showing (b) for the case  $\mu_1 \le n-2$. If $\mu_1 \ge n-1$, then each $ \max\{i_k - 1, \mu_{i_1}\}$ in the asserted formula equals $\mu_{i_1}$, so the formula becomes the same as the one in (a). This is correct as $I_2 (\bTlm) = I_2 (\bSlm)$ if $\mu_1 \ge n-1$ (see Remark~\ref{rem:includes-Ferrers-graphs}). 

Assume $\mu_1 \le n-2$. Continue to use the notation introduced in Lemma \ref{lem:liai-recursion}. This result implies for all integers $k \ge 0$
\begin{equation}
  \label{eq:recursion-coefficients}
\overline{h}_k (\lambda - \mu) = \overline{h}_k (\widetilde{\lambda} - \mu) + \overline{h}_{k-1} (\lambda' - \mu').
\end{equation}

A straightforward computation shows that this recursion provides
the claimed formula for $\overline{h}_1 (\lambda - \mu)$. Thus, it
suffices to consider $k \ge 2$.

Now we use induction on $n \ge 2$. If $n = 2$, then
$\overline{p}_{\lambda' - \mu'} = 1$. Thus $\overline{h}_2 (\lambda - \mu) = 0$ by
induction on $\lambda_2 - \mu_2 \ge 1$, using Lemma
\ref{lem:liai-recursion}.

Let $n \geq 3$. Now, we use induction  on $k \geq 2$. Since the case
$k = 2$ is similar, but easier than the general case, we present the
argument only if $k \geq 3$. Finally, we use induction on $\lambda_n
- \mu_n \geq 1$. 

Assume $\la_n - \mu_n = 1$. Then row $n$ in the tableaux $\bT_{\tilde{\lambda} - \mu}$ is empty, so we know $\overline{h}_k (\widetilde{\lambda} - \mu)$ and $\overline{h}_{k-1} (\lambda' - \mu')$ by induction on $n$. Hence, Equation~\eqref{eq:recursion-coefficients} gives 
\begin{eqnarray*}
\lefteqn{ \overline{h}_k (\lambda - \mu) } \\
& = & \sum_{2 \leq i_1 <  \ldots < i_{k} \le n-1} \
\sum_{\split{j_{k-1} =  \lambda_{i_1} - \max\{i_k - 1, \mu_{i_1}\}}{ - \lambda_{i_k} + \max\{i_k - 1, \mu_{i_k}\}  - k + 2}}^{\split{\lambda_{i_1} - \max\{i_k - 1, \mu_{i_1}\}}{ - k + \sigma_{i_k - 1 - \mu_1}}} \
\sum_{\split{j_{k-2} =  \lambda_{i_1} - \max\{i_k - 1, \mu_{i_1}\}}{ - \lambda_{i_{k-1}} + \max\{i_k - 1, \mu_{i_{k-1}} \} - k + 3}}^{j_{k-1}} \ldots
\sum_{\split{j_1 = \lambda_{i_1} - \max\{i_k - 1, \mu_{i_1}\}}{ - \lambda_{i_{2}} + \max\{i_k - 1, \mu_{i_2}\}}}^{j_{2}} j_1 \\
& & + \sum_{2 \leq i_1 <  \ldots < i_{k-1} \le n-1} \
\sum_{\split{j_{k-2} =  \lambda_{i_1} - \max\{n - 1, \mu_{i_1}\}}{ - \lambda_{i_{k-1}} + \max\{n - 1, \mu_{i_{k-1}}\}  - k + 3}}^{\split{\lambda_{i_{1}} - \max\{n - 1, \mu_{i_{1}}\}}{ - k + 1}} \
\sum_{\split{j_{k-3} =  \lambda_{i_1} - \max\{i_k - 1, \mu_{i_1}\}}{ - \lambda_{i_{k-2}} + \max\{i_k - 1, \mu_{i_{k-2}} \} - k + 4}}^{j_{k-2}} \ldots
\sum_{\split{j_1 = \lambda_{i_1} - \max\{i_k - 1, \mu_{i_1}\}}{ - \lambda_{i_{2}} + \max\{i_k - 1, \mu_{i_2}\}}}^{j_{2}} j_1.
\end{eqnarray*}
Observing that $\sigma_{n-1-\mu_1} = 1$ and $\lambda_n = \max\{n-1, \mu_n\} = \lambda_n - \mu_n = 1$, the second summation can be re-written as 
\[
\sum_{2 \leq i_1 <  \ldots < i_{k-1} < i_k = n} \
\sum_{\split{j_{k-1} =  \lambda_{i_1} - \max\{n - 1, \mu_{i_1}\}}{ - \lambda_{n} + \max\{n - 1, \mu_{n}\} - k + 2}}^{\split{\lambda_{i_1} - \max\{n - 1, \mu_{i_1}\}}{ - k + \sigma_{n - 1 - \mu_1}}} \
\sum_{\split{j_{k-2} =  \lambda_{i_1} - \max\{n - 1, \mu_{i_1}\}}{ - \lambda_{i_{k-1}} + \max\{n - 1, \mu_{i_{k-1}}\}  - k + 3}}^{j_{k-1}} \
\ldots
\sum_{\split{j_1 = \lambda_{i_1} - \max\{n - 1, \mu_{i_1}\}}{ - \lambda_{i_{2}} + \max\{n - 1, \mu_{i_2}\}}}^{j_{2}} j_1.
\]
Substituting this into the previous equation gives 
\begin{eqnarray*}
\lefteqn{ \overline{h}_k (\lambda - \mu) } \\
& = & \sum_{2 \leq i_1 <  \ldots < i_{k} \le n} \
\sum_{\split{j_{k-1} =  \lambda_{i_1} - \max\{i_k - 1, \mu_{i_1}\}}{ - \lambda_{i_k} + \max\{i_k - 1, \mu_{i_k}\}  - k + 2}}^{\split{\lambda_{i_1} - \max\{i_k - 1, \mu_{i_1}\}}{ - k + \sigma_{i_k - 1 - \mu_1}}} \
\sum_{\split{j_{k-2} =  \lambda_{i_1} - \max\{i_k - 1, \mu_{i_1}\}}{ - \lambda_{i_{k-1}} + \max\{i_k - 1, \mu_{i_{k-1}} \} - k + 3}}^{j_{k-1}} \ldots
\sum_{\split{j_1 = \lambda_{i_1} - \max\{i_k - 1, \mu_{i_1}\}}{ - \lambda_{i_{2}} + \max\{i_k - 1, \mu_{i_2}\}}}^{j_{2}} j_1, 
\end{eqnarray*}
as claimed. 

Assume now $\la_n - \mu_n \ge 2$. Then the induction
hypotheses and Formula \eqref{eq:recursion-coefficients} provide  the following, after 
considering separately the cases $i_k < n$ and $i_k = n$ in the formula for $\overline{h}_k (\widetilde{\lambda} - \mu)$:
\begin{eqnarray*}
  \label{eq:sums}
\lefteqn{ \overline{h}_k (\lambda - \mu) } \\
& = &  \sum_{2 \leq i_1 <  \ldots < i_{k} \le n-1} \
\sum_{\split{j_{k-1} =  \lambda_{i_1} - \max\{i_k - 1, \mu_{i_1}\}}{ - \lambda_{i_k} + \max\{i_k - 1, \mu_{i_k}\}  - k + 2}}^{\split{\lambda_{i_1} - \max\{i_k - 1, \mu_{i_1}\}}{ - k + \sigma_{i_k - 1 - \mu_1}}} \
\sum_{\split{j_{k-2} =  \lambda_{i_1} - \max\{i_k - 1, \mu_{i_1}\}}{ - \lambda_{i_{k-1}} + \max\{i_k - 1, \mu_{i_{k-1}} \} - k + 3}}^{j_{k-1}} \ldots
\sum_{\split{j_1 = \lambda_{i_1} - \max\{i_k - 1, \mu_{i_1}\}}{ - \lambda_{i_{2}} + \max\{i_k - 1, \mu_{i_2}\}}}^{j_{2}} j_1 \\
& & + \sum_{2 \leq i_1 <  \ldots < i_{k-1} <  i_k = n} \
\sum_{\split{j_{k-1} =  \lambda_{i_1} - \max\{n - 1, \mu_{i_1}\}}{ - (\lambda_{n} - 1) + \max\{n - 1, \mu_{n}\}  - k + 2}}^{\lambda_{i_1} - \max\{n - 1, \mu_{i_1}\} - k + 1} \
\sum_{\split{j_{k-2} =  \lambda_{i_1} - \max\{n - 1, \mu_{i_1}\}}{ - \lambda_{i_{k-1}} + \max\{n - 1, \mu_{i_{k-1}} \} - k + 3}}^{j_{k-1}} \ldots
\sum_{\split{j_1 = \lambda_{i_1} - \max\{n - 1, \mu_{i_1}\}}{ - \lambda_{i_{2}} + \max\{n - 1, \mu_{i_2}\}}}^{j_{2}} j_1 \\
& & + \sum_{2 \leq i_1 <  \ldots < i_{k-1} \le n-1} \
\sum_{\split{j_{k-2} =  \lambda_{i_1} - \max\{n - 1, \mu_{i_1}\}}{ - \lambda_{i_1} + \max\{n - 1, \mu_{i_1}\}  - k + 3}}^{\split{\lambda_{i_1} - (\la_n - \mu_n -1) }{ \max\{n - 1, \mu_{i_1}\} - (k-1)}} \
\sum_{\split{j_{k-3} =  \lambda_{i_1} - \max\{n - 1, \mu_{i_1}\}}{ - \lambda_{i_{k-2}} + \max\{n - 1, \mu_{i_{k-2}} \} - k + 4}}^{j_{k-2}} \ldots
\sum_{\split{j_1 = \lambda_{i_1} - \max\{n - 1, \mu_{i_1}\}}{ - \lambda_{i_{2}} + \max\{n - 1, \mu_{i_2}\}}}^{j_{2}} j_1. 
\end{eqnarray*}
Notice that in the third summation the upper limit for $j_{k-2}$ is one less than the lower limit for $j_{k-1}$ in the second summation. Thus,  combining these two summations provides: 
\begin{eqnarray*}
\lefteqn{ \overline{h}_k (\lambda - \mu) } \\
& = & \sum_{2 \leq i_1 <  \ldots < i_{k} \le n-1} \
\sum_{\split{j_{k-1} =  \lambda_{i_1} - \max\{i_k - 1, \mu_{i_1}\}}{ - \lambda_{i_k} + \max\{i_k - 1, \mu_{i_k}\}  - k + 2}}^{\split{\lambda_{i_1} - \max\{i_k - 1, \mu_{i_1}\}}{ - k + \sigma_{i_k - 1 - \mu_1}}} \
\sum_{\split{j_{k-2} =  \lambda_{i_1} - \max\{i_k - 1, \mu_{i_1}\}}{ - \lambda_{i_{k-1}} + \max\{i_k - 1, \mu_{i_{k-1}} \} - k + 3}}^{j_{k-1}} \ldots
\sum_{\split{j_1 = \lambda_{i_1} - \max\{i_k - 1, \mu_{i_1}\}}{ - \lambda_{i_{2}} + \max\{i_k - 1, \mu_{i_2}\}}}^{j_{2}} j_1 \\
& & + \sum_{2 \leq i_1 <  \ldots < i_{k-1} < i_k = n} \
\sum_{\split{j_{k-1} =  \lambda_{i_1} - \max\{n - 1, \mu_{i_1}\}}{ - \lambda_{n} + \max\{n - 1, \mu_{n}\}  - k + 2}}^{\lambda_{i_1} - \max\{n - 1, \mu_{i_1}\} - k + 1} \
\sum_{\split{j_{k-2} =  \lambda_{i_1} - \max\{n - 1, \mu_{i_1}\}}{ - \lambda_{i_{k-1}} + \max\{n - 1, \mu_{i_{k-1}} \} - k + 3}}^{j_{k-1}} \ldots
\sum_{\split{j_1 = \lambda_{i_1} - \max\{n - 1, \mu_{i_1}\}}{ - \lambda_{i_{2}} + \max\{n - 1, \mu_{i_2}\}}}^{j_{2}} j_1 \\
& = & \sum_{2 \leq i_1 < i_2 < \ldots < i_k \leq n} \
\sum_{\split{j_{k-1} =  \lambda_{i_1} - \max\{i_k - 1, \mu_{i_1}\}}{ - \lambda_{i_k} + \max\{i_k - 1, \mu_{i_k}\}  - k + 2}}^{\split{\lambda_{i_1} - \max\{i_k - 1, \mu_{i_1}\}}{ - k + \sigma_{i_k - 1 - \mu_1}}}\
\sum_{\split{j_{k-2} =  \lambda_{i_1} - \max\{i_k - 1, \mu_{i_1}\}}{ - \lambda_{i_{k-1}} + \max\{i_k - 1, \mu_{i_{k-1}} \} - k + 3}}^{j_{k-1}} \ldots
\sum_{\split{j_1 = \lambda_{i_1} - \max\{i_k - 1, \mu_{i_1}\}}{ - \lambda_{i_{2}} + \max\{i_k - 1, \mu_{i_2}\}}}^{j_{2}} j_1,
\end{eqnarray*}
where we used the assumption $\mu_1 \le n-2$ to conclude that $\sigma_{i_k - 1 - \mu_1} = 1$ if $i_k = n$. 
This completes the proof.
\end{proof}

\begin{Corollary}
  \label{cor:non-vanishing}
Assume $n \ge 2$. Then, for any integer $k \in \{1,\ldots,n-1\}$:
\begin{itemize}
  \item[(a)] ${h}_k (\lambda - \mu)
  >0$ if and only if
  \[
k \le \la_i - \mu_i + i - 3 \quad \text{for all } i = 2,\ldots,k+1.
  \]

  \item[(b)] $\overline{h}_k (\lambda - \mu)
  >0$ if and only if there is some integer $i_k \in \{k+1,\ldots,n\}$ such that 
\begin{equation}
     \label{eq:top condition}
     2 \le \la_{i_k} - \mu_{i_k}  + \sigma_{i_k - 1 - \mu_1}
\end{equation}  
and   
\begin{equation}
     \label{eq:other cond}
k \le \la_i - \max \{i_k - 1, \mu_i\} + i - 3 +  \sigma_{i_k - 1 - \mu_1} \quad \text{for all }
i = 2,\ldots,k+1.
\end{equation}
\end{itemize}
\end{Corollary}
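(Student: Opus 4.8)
The plan is to read off both equivalences directly from the explicit formulas for $h_k(\la-\mu)$ and $\overline h_k(\la-\mu)$ in Theorem~\ref{thm:Hilb-series}. Each coefficient is a sum, indexed by the tuples $2\le i_1<i_2<\dots<i_k\le n$, of a nested summation whose innermost summand is the variable $j_1$. As observed just before that theorem, every such tuple-term is non-negative; hence $h_k(\la-\mu)>0$ (resp.\ $\overline h_k(\la-\mu)>0$) if and only if at least one tuple-term is strictly positive. So the whole question splits into two parts: deciding when a single tuple-term is positive, and deciding for which $(\la,\mu)$ a positive tuple exists.

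First I would settle positivity of a single tuple-term. For a fixed tuple the term has the shape $\sum_{j_{k-1}=L_{k-1}}^{U}\sum_{j_{k-2}=L_{k-2}}^{j_{k-1}}\cdots\sum_{j_1=L_1}^{j_2}j_1$, i.e.\ a chain of inequalities $L_1\le j_1\le j_2\le\dots\le j_{k-1}\le U$. Because $\la$ is a partition and $\mu$ is non-decreasing, $\la_i-\mu_i$ is non-increasing in $i$, which forces the innermost lower limit $L_1=\la_{i_1}-\mu_{i_1}-\la_{i_2}+\mu_{i_2}$ to be $\ge 0$; thus the summand $j_1$ is never negative (recovering the non-negativity above), and the term is positive exactly when the feasible region contains a point with $j_1\ge 1$. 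Pushing every variable up to the common top value $U$ shows this happens if and only if $U\ge 1$ and $L_\ell\le U$ for all $\ell$. Writing these out and telescoping gives, for case (a), the per-tuple criterion $\la_{i_s}-\mu_{i_s}\ge k-s+2$ for all $s=1,\dots,k$. Since $\la_i-\mu_i$ is non-increasing and every admissible tuple satisfies $i_s\ge s+1$, the greedy tuple $i_s=s+1$ maximizes each $\la_{i_s}-\mu_{i_s}$, so a positive term exists if and only if $(2,3,\dots,k+1)$ already yields one. Substituting $i=s+1$ turns $\la_{i_s}-\mu_{i_s}\ge k-s+2$ into $k\le\la_i-\mu_i+i-3$ for $i=2,\dots,k+1$, which is exactly statement (a).

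For statement (b) the identical analysis applies to the shifted bounds of Theorem~\ref{thm:Hilb-series}(b). Setting $\tilde r_s:=\la_{i_s}-\max\{i_k-1,\mu_{i_s}\}$ and $\sigma:=\sigma_{i_k-1-\mu_1}$, the $U\ge1$ and $L_\ell\le U$ conditions become $\la_{i_s}-\max\{i_k-1,\mu_{i_s}\}\ge k-s+2-\sigma$ for $s=1,\dots,k$. The standing hypothesis $\mu_{i_k}\ge i_k-1$ collapses the $s=k$ bound (where $\max\{i_k-1,\mu_{i_k}\}=\mu_{i_k}$) to $\la_{i_k}-\mu_{i_k}\ge 2-\sigma$, which is inequality~\eqref{eq:top condition}. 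For a fixed $i_k$ the function $i\mapsto\la_i-\max\{i_k-1,\mu_i\}$ is again non-increasing, so $i_1,\dots,i_{k-1}$ may be chosen greedily as $2,\dots,k$; fitting these strictly below $i_k$ forces $i_k\ge k+1$, which explains the range $i_k\in\{k+1,\dots,n\}$, and the remaining bounds give inequality~\eqref{eq:other cond} for $i=2,\dots,k$.

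The one loose end—the main nuisance rather than a real obstacle—is reconciling the index range $i=2,\dots,k+1$ in~\eqref{eq:other cond} with the greedy range $i=2,\dots,k$ just produced. I would close this by showing the extra instance $i=k+1$ is automatic: since $i_k\ge k+1$ and $i\mapsto\la_i-\max\{i_k-1,\mu_i\}$ is non-increasing, one has $\la_{k+1}-\max\{i_k-1,\mu_{k+1}\}\ge\la_{i_k}-\mu_{i_k}$, so the $i=k+1$ case of~\eqref{eq:other cond} follows from~\eqref{eq:top condition}. Adjoining this redundant term therefore changes nothing, and the stated conditions are equivalent to those produced by the greedy construction. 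The genuinely delicate bookkeeping throughout is keeping the telescoping lower limits $L_\ell$, the top limit $U$, and the $\sigma$-shift attached to the correct indices; once that alignment is pinned down, both equivalences fall out at once.
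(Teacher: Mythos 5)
Your proposal is correct and takes essentially the same route as the paper's own proof: both extract the per-tuple positivity criterion $k \le \la_{i_j} - \max\{i_k-1,\mu_{i_j}\} + j - 2 + \sigma_{i_k-1-\mu_1}$ (for all $j$) from the nested-sum formula of Theorem~\ref{thm:Hilb-series}, then pass to the stated conditions using monotonicity of $i \mapsto \la_i - \max\{i_k-1,\mu_i\}$ together with the greedy choice $i_j = j+1$ for $j < k$; your separate observation that the $i = k+1$ instance of~\eqref{eq:other cond} is implied by~\eqref{eq:top condition} is just a reorganization of the paper's forward implication, which gets that instance from the $j=k$ case of the per-tuple criterion. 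The only point to tidy up is that your nested-sum analysis literally applies only for $k \ge 2$, so (as the paper does) the case $k=1$ should be checked separately against the explicit formulas for $h_1$ and $\overline{h}_1$.
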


\begin{proof}
 First, let us show (b). If $2 \le k < n$, then the formula for $\overline{h}_k (\lambda - \mu)$ gives that $\overline{h}_k (\lambda - \mu)$ is positive if and only if there are integers $i_2 < i_3 < \cdots < i_k$ in $\{2,\ldots,n\}$ such that the number $\lambda_{i_1} - \max\{i_k - 1, \mu_{i_1}\} - k + \sigma_{i_k - 1 - \mu_1}$ is positive and not less than each $\lambda_{i_1} - \max\{i_k - 1, \mu_{i_1}\} - \lambda_{i_j} + \max\{i_k - 1, \mu_{i_j}\}  - j + 2$ whenever $2 \le j \le k$. This is equivalent to 
 \begin{equation}
     \label{eq:equiv cond}
 k \le \lambda_{i_j} - \max\{i_k - 1, \mu_{i_j}\} + j - 2 + \sigma_{i_k - 1 - \mu_1} \text{ for all } j = 1,\ldots,k. 
 \end{equation}
For $j = k$ this condition becomes Inequality \eqref{eq:top condition} because $\mu_{i_k} \ge i_k -1$. Furthermore, for each $j \in \{1,\ldots,k\}$, we have $i_j \ge j+1$. Thus, using \eqref{eq:equiv cond} we obtain
\begin{align*}
k & \le \lambda_{i_j} - \max\{i_k - 1, \mu_{i_j}\} + j - 2 + \sigma_{i_k - 1 - \mu_1}  \\
& \le \lambda_{j+1} - \max\{i_k - 1, \mu_{j+1}\} + (j+1) - 3 + \sigma_{i_k - 1 - \mu_1}.  
\end{align*}
Hence, we have shown that Conditions \eqref{eq:equiv cond} imply \eqref{eq:top condition} and \eqref{eq:other cond}. 

Conversely, assume \eqref{eq:top condition} and \eqref{eq:other cond} are satisfied. Choosing then 
$i_j = j+1$ for $j = 1,\ldots,k-1$, we obtain
\begin{align*}
k & \le \lambda_{j+1} - \max\{i_k - 1, \mu_{j+1}\} + (j+1) - 3 + \sigma_{i_k - 1 - \mu_1} \\
& = \lambda_{i_j} - \max\{i_k - 1, \mu_{i_j}\} + j - 2 + \sigma_{i_k - 1 - \mu_1}. 
\end{align*} 
Since, for $j = k$, \eqref{eq:equiv cond} is equivalent to \eqref{eq:top condition}  it follows that Conditions \eqref{eq:equiv cond} hold. 

We have shown that  assertion (b) is true if $k \ge 2$. Using the first part of Theorem \ref{thm:Hilb-series}(b), one checks that (b) is also true if $k = 1$. 

Second, for  claim (a) one argues similarly.  We leave the details to the interested reader. 
\end{proof}

Part (a) of the previous result  implies: 

\begin{Corollary}
   \label{cor:regularity}
If $n \ge 2$, then
\[
\reg \cF(I_{\la - \mu}) =
\min \{n-1, \la_i - \mu_i + i - 3 \s 2 \le i \le n\}.
\]
\end{Corollary}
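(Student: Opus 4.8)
The plan is to read off the regularity from the Hilbert series, using that $\cF(I_{\la - \mu})$ is Cohen--Macaulay, and then to translate the non-vanishing criterion of Corollary~\ref{cor:non-vanishing}(a) into the claimed minimum. First I would recall that $\cF(I_{\la - \mu}) \cong K[\bTlm]/I_2(\bTlm)$ is a Cohen--Macaulay standard graded domain (Corollary~\ref{cor:primeness}) of dimension $m+n-1$, and that by Theorem~\ref{thm:Hilb-series}(a) its Hilbert series is $p_{\la - \mu}(t)/(1-t)^{m+n-1}$ with $p_{\la-\mu}(1) = e(\cF(I_{\la-\mu})) > 0$. Since $p_{\la-\mu}(1)\neq 0$, the factor $(1-t)$ does not divide $p_{\la-\mu}$, so this is already the Hilbert series in lowest terms and $p_{\la-\mu}$ is the $h$-polynomial. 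Invoking the standard fact that for a Cohen--Macaulay standard graded algebra the Castelnuovo--Mumford regularity equals the degree of the $h$-polynomial (equivalently, $\reg = a\text{-invariant} + \dim$; see, e.g., \cite{BH}), I would reduce the statement to the identity
\[
\reg \cF(I_{\la - \mu}) = \deg p_{\la - \mu}(t) = \max\{k : h_k(\la - \mu) > 0\}.
\]

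For the combinatorial core I would abbreviate $a_i := \la_i - \mu_i + i - 3$ and set $r := \min\{n-1, a_2, \ldots, a_n\}$, so that the goal is to show $\max\{k : h_k(\la-\mu) > 0\} = r$. A first useful observation is that $\mu_i \le \mu_n < \la_n \le \la_i$ forces $\la_i - \mu_i \ge 1$, whence $a_i \ge i - 2 \ge 0$ for $2 \le i \le n$ and in particular $r \ge 0$. By Corollary~\ref{cor:non-vanishing}(a), for $1 \le k \le n-1$ one has $h_k(\la-\mu) > 0$ if and only if $k \le \min\{a_2, \ldots, a_{k+1}\}$. The inequality $\max\{k : h_k > 0\} \ge r$ is then immediate: if $r \ge 1$, then $r \le n-1$ and $r \le a_i$ for every $i \in \{2,\ldots,r+1\}$, so $h_r > 0$, while if $r = 0$ then $h_0 = 1 > 0$ settles it.

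The step I expect to require the most care is the reverse inequality $\max\{k : h_k > 0\} \le r$, which I would prove by contradiction. Suppose $h_k(\la-\mu) > 0$ for some $k$ with $r < k \le n-1$; then $r < n-1$, so the minimum defining $r$ is attained at some $a_j = r$ with $2 \le j \le n$. The decisive point is that the bound $a_j \ge j - 2$ established above yields $j \le a_j + 2 = r + 2 \le k+1$, so that the witness index $j$ in fact lies in the window $\{2,\ldots,k+1\}$ that controls $h_k$; then $a_j = r < k$ contradicts the criterion $k \le \min\{a_2,\ldots,a_{k+1}\}$. This index estimate $j \le r+2$ — guaranteeing that the index achieving the minimum is not pushed out of the relevant range — is the only genuinely delicate ingredient; everything else is bookkeeping. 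Combining the two inequalities identifies the top degree as $r$ and completes the proof.
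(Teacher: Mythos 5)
Your proposal is correct and follows essentially the same route as the paper: read off the regularity as the degree of the $h$-polynomial (using Cohen--Macaulayness) and apply Corollary~\ref{cor:non-vanishing}(a). The paper's own proof is just a compressed version of your argument --- its unexplained claim that $r = \min \{n-1, \la_i - \mu_i + i - 3 \s 2 \le i \le r+2\}$ is exactly what your index estimate $\la_i - \mu_i + i - 3 \ge i-2$ justifies.
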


\begin{proof} Set $r = \min \{n-1, \la_i - \mu_i + i - 3 \s 2 \le i \le n\}$. Using  Corollary \ref{cor:non-vanishing}, we conclude that $h_r (\la - \mu) \neq 0$ and $h_{r+1} (\la - \mu) = 0$ because 
\[
r = \min \{n-1, \la_i - \mu_i + i - 3 \s 2 \le i \le r+2\}.
\] 
\end{proof}

Now we illustrate Corollary \ref{cor:non-vanishing}(b) in the case where $\overline{I}_{\lambda - \mu}$ is a strongly stable monomial ideal. 

\begin{Corollary}
    \label{cor:reg specialized fiber}
If $n \ge 2$ and $\mu = (0,1,\ldots,n-1) \in \ZZ^n$, then
\[
\reg \cF(\overline{I}_{\la - \mu}) =
\min \{n-1, \left \lfloor \frac{\la_i  + i}{2} \right \rfloor - 1 \s 2 \le i \le n\}.
\]    
\end{Corollary}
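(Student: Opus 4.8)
The plan is to reduce the computation of the regularity to the degree of the $h$-polynomial and then to extract the top nonvanishing coefficient from Corollary~\ref{cor:non-vanishing}(b).

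First I would recall that $\cF(\overline{I}_{\la-\mu}) \cong K[\bTlm]/I_2(\bSlm)$ is Cohen-Macaulay (Corollary~\ref{cor:CM} and Theorem~\ref{thm:special-fiber-ring-is-ladder-determinantal}). For a Cohen-Macaulay standard graded $K$-algebra whose Hilbert series has numerator $Q(t)$ with $Q(1) \ne 0$, the Castelnuovo-Mumford regularity equals $\deg Q(t)$ (see, e.g., \cite{BH}). By Theorem~\ref{thm:Hilb-series}(b) this numerator is $\overline{p}_{\la-\mu}(t) = 1 + \sum_{k=1}^{n-1}\overline{h}_k(\la-\mu)\,t^k$, with $\overline{p}_{\la-\mu}(1) = e(\cF(\overline{I}_{\la-\mu})) > 0$ and all coefficients non-negative, so no cancellation occurs. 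Hence
\[
\reg \cF(\overline{I}_{\la-\mu}) = \max\{k \s \overline{h}_k(\la-\mu) > 0\}.
\]

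Next I would specialize Corollary~\ref{cor:non-vanishing}(b) to $\mu = (0,1,\ldots,n-1)$. Here $\mu_1 = 0$, and every index $i_k$ occurring there satisfies $i_k \ge k+1 \ge 2$, so $\sigma_{i_k - 1 - \mu_1} = 1$. Since $\la_n \ge n$ forces $\la_{i_k} \ge i_k$, Inequality~\eqref{eq:top condition} holds automatically. Moreover, for $i \in \{2,\ldots,k+1\}$ one has $i-1 \le k \le i_k - 1$, hence $\max\{i_k-1, \mu_i\} = i_k-1$, and Inequality~\eqref{eq:other cond} becomes simply $i_k \le \la_i + i - k - 1$. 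As this is an upper bound on $i_k$, the smallest admissible value $i_k = k+1$ is the easiest to realize; thus a valid index exists in $\{k+1,\ldots,n\}$ exactly when $k+1 \le \la_i + i - k - 1$ for all $i \in \{2,\ldots,k+1\}$. Setting $f(i) := \lfloor \tfrac{\la_i+i}{2}\rfloor - 1$, this reads
\[
\overline{h}_k(\la-\mu) > 0 \quad\Longleftrightarrow\quad k \le \min_{2 \le i \le k+1} f(i).
\]

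It then remains to show that the largest such $k$ equals $R := \min\{n-1,\, f(2),\ldots,f(n)\}$. For $\reg \ge R$, note $R \le n-1$ gives $\{2,\ldots,R+1\} \subseteq \{2,\ldots,n\}$, whence $R \le \min_{2 \le i \le n} f(i) \le \min_{2 \le i \le R+1} f(i)$, so $\overline{h}_R > 0$. For $\reg \le R$, suppose toward a contradiction that $\overline{h}_k > 0$ for some $k > R$. Then $R \le n-2$, so $R = f(i_0)$ for some minimizer $i_0 \in \{2,\ldots,n\}$; and since $\overline{h}_k > 0$ forces $f(i) \ge k > R = f(i_0)$ for all $i \le k+1$, we must have $i_0 > k+1$, i.e.\ $i_0 \ge R+3$. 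The hard part is ruling this out, and it is precisely here that the partition hypothesis enters: from $f(i_0) = R$ we get $\la_{i_0} + i_0 \le 2R+3$, while $\la_{i_0} \ge \la_n \ge n$, and together these give $i_0 \le 2R+3-n \le R+1$ (using $R \le n-2$), contradicting $i_0 \ge R+3$. Hence $\reg \cF(\overline{I}_{\la-\mu}) = R$, which is the asserted formula. The only genuinely delicate step is this last bound on the location $i_0$ of the minimizer of $f$, where the monotonicity of the partition $\la$ is exactly what is needed.
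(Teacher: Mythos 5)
Your proposal is correct and takes essentially the same route as the paper's own proof: both specialize Corollary~\ref{cor:non-vanishing}(b) to $\mu_i = i-1$ to obtain the criterion that $\overline{h}_k(\la-\mu) > 0$ iff some $i_k \in \{k+1,\ldots,n\}$ satisfies $k + i_k \le \la_i + i - 1$ for all $2 \le i \le k+1$, then establish the lower bound via the choice $i_k = k+1$ and the upper bound by a contradiction that pins down the location of the minimizer of $\lfloor (\la_i + i)/2 \rfloor - 1$ using $\la_i \ge \la_n \ge n$ and $R \le n-2$. The only cosmetic differences are that you invoke the Cohen-Macaulay identification of regularity with the degree of the $h$-polynomial explicitly, and you rule out all $k > R$ at once rather than only $k = R+1$.
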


\begin{proof}
Using $\mu_i = i-1$, Corollary \ref{cor:non-vanishing}(b) gives $\overline{h}_k (\lambda - \mu)
  >0$ if and only if there is some integer $i_k \in \{k+1,\ldots,n\}$ such that 
\[
i_k \le \la_{i_k}
\]  
and
\begin{equation}
    \label{eq:cond spec}
k + i_k \le \la_i + i -1 \quad \text{ for all } i = 2,\ldots,k+1. 
\end{equation}

Set 
\[
r = \min \{n-1, \left \lfloor \frac{\la_i  + i}{2} \right \rfloor - 1 \s 2 \le i \le n\}.
\]

Now put $i_r = r+1$. Then $i_r \le n \le \la_n \le \la_{i_r}$ and, by definition of $r$, 
\[
r+i_r = 2 r+ 1 \le \la_i + i -1 
\]
for each $i = 2,\ldots,n$. Hence Conditions \eqref{eq:cond spec} are satisfied, and thus $\overline{h}_r (\lambda - \mu) >0$. This gives $\reg \cF(\overline{I}_{\la - \mu}) \ge r$. If $r = n-1$, then equality follows by Theorem \ref{thm:Hilb-series}. 

Assume $r \le n-2$. Then it remains to show that $\overline{h}_{r+1} (\lambda - \mu) = 0$.  
If $i \ge r+2$, then 
\[
\frac{\la_i + i}{2} \ge \frac{n+r + 2}{2} \ge r+2. 
\]
It follows that 
\[
r = \min \{n-1, \left \lfloor \frac{\la_i  + i}{2} \right \rfloor - 1 \s 2 \le i \le r+1\} = \frac{\la_j + j}{2} - 1
\]
for some $j \in \{2,\ldots,r+1\}$. This implies $\la_j + j \le 2 r + 3$. 

However, \eqref{eq:cond spec} with $k = r+1$ requires in particular
\[
2 r + 3 \le r+1 + i_{r+1} \le \la_{j} + j - 1. 
\]
This contradiction shows $\overline{h}_{r+1} (\lambda - \mu) = 0$, as desired. 
\end{proof}

Let us apply these results in order to compute the reduction number of the minimal reductions established in the previous section. There are results in the literature that relate reduction numbers and Castelnuovo-Mumford regularities under various assumptions (see, e.g., \cite{Trung}). However, we need the following observation. 

\begin{Proposition}
    \label{prop:red number - reg}
Let $I \subset R = K[x_1,\ldots,x_N]$ be a homogeneous ideal that is generated in one degree, say $d$, where $K$ is an infinite field.  Assume that the special fiber ring $\cF (I)$ is Cohen-Macaulay.  Then each minimal  reduction of $I$ is generated by $\dim \cF(I)$ homogeneous   polynomials of degree $d$,  and $I$ has reduction number 
\[
r (I) = \reg \cF(I). 
\]
\end{Proposition}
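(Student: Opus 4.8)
The plan is to work entirely inside the special fiber ring and to exploit its Cohen--Macaulayness. Write $\fm = (x_1,\dots,x_N)$ and $B := \cF(I) = \bigoplus_{j\ge 0} I^j/\fm I^j$. Because $I$ is generated in the single degree $d$, each graded piece $B_j = I^j/\fm I^j$ is spanned by the images of products of the degree-$d$ generators of $I$; hence $B$ is a \emph{standard} graded $K$-algebra, $B = K[B_1]$, and $\ell := \dim B$ is the analytic spread of $I$. The whole argument hinges on translating the reduction condition into a statement about the graded pieces of $B$.

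For a reduction $J = (f_1,\dots,f_s)\subseteq I$ generated by forms of degree $d$, with images $\bar f_i \in B_1$ and $\bar J := (\bar f_1,\dots,\bar f_s)B$, the first step I would establish is the translation
\[
I^{e+1} = J\cdot I^{e} \iff B_{e+1} = (\bar J)_{e+1}.
\]
This rests on Nakayama: $J I^e \subseteq I^{e+1}$, and $I^{e+1} = J I^e$ holds if and only if the image of $J I^e$ in $B_{e+1} = I^{e+1}/\fm I^{e+1}$ is all of $B_{e+1}$, which says exactly $(\bar J)_{e+1} = B_{e+1}$. Since $I^{e+1} = J I^e$ forces $I^{e+2} = J I^{e+1}$, the condition persists once it holds, so
\[
r_J(I) = \max\{t : (B/\bar J)_t \ne 0\}.
\]
I would also settle the shape of minimal reductions here. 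Since $K$ is infinite, $B$ admits a Noether normalization by $\ell$ linear forms, and lifting them to $K$-combinations of the generators of $I$ produces a minimal reduction generated by $\ell$ forms of degree $d$. Conversely, any minimal reduction has exactly $\ell = \dim\cF(I)$ generators by Northcott--Rees, and may be taken homogeneous; the degree-$(e+1)d$ component of $JI^e$ involves only the degree-$d$ generators of $J$ (a generator of degree $>d$ would push $I^e$ into degree $>(e+1)d$), so the subideal $J_d\subseteq J$ spanned by the degree-$d$ generators already satisfies $B_{e+1}=(\bar J_d)_{e+1}$ for $e\gg 0$. By the translation above $J_d$ is itself a reduction, and minimality forces $J=J_d$; thus every minimal reduction is generated by $\ell$ forms of degree $d$.

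The heart of the matter is computing the top nonzero degree of $B/\bar J$. For a minimal reduction, $\bar f_1,\dots,\bar f_\ell$ form a homogeneous system of parameters of $B$ consisting of linear forms, and since $B$ is Cohen--Macaulay they are a regular sequence. Hence $B/\bar J$ is Artinian with Hilbert series
\[
H_{B/\bar J}(t) = (1-t)^{\ell}\,H_B(t) = h_B(t),
\]
the $h$-polynomial (normalized numerator) of $B$, whose top nonzero degree is $\deg h_B(t)$. For a Cohen--Macaulay standard graded algebra one has $\reg B = a(B) + \dim B = \deg h_B(t)$. Combining this with the formula for $r_J(I)$, every minimal reduction satisfies $r_J(I) = \deg h_B(t) = \reg\cF(I)$, and taking the minimum over all minimal reductions yields $r(I) = \reg\cF(I)$.

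The step I expect to demand the most care is precisely this last passage: one must verify that the reduction property is detected in a single graded component (the Nakayama step) and propagates, and—crucially—that Cohen--Macaulayness upgrades the linear system of parameters to a regular sequence, so that the Hilbert series of the Artinian quotient is exactly $h_B(t)$. It is this regularity of the sequence that makes $r_J(I)$ independent of the chosen minimal reduction and pins it to $\reg\cF(I)$; without Cohen--Macaulayness the Hilbert series of $B/\bar J$ would not reduce to $h_B(t)$ and the clean identity $r(I)=\reg\cF(I)$ would fail.
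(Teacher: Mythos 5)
Your proposal is correct and follows essentially the same route as the paper's proof: translate the reduction condition into vanishing of graded pieces of $\cF(I)/J\cF(I)$ via Nakayama, use Cohen--Macaulayness to upgrade the linear system of parameters coming from a minimal reduction to a regular sequence, and identify the top nonzero degree of the Artinian quotient with $\reg \cF(I)$ (you cite $\reg B = \deg h_B(t)$ for Cohen--Macaulay algebras, while the paper reaches the same identification via invariance of regularity under a linear regular sequence plus the Artinian case). Your justification that minimal reductions are generated in degree $d$ (passing to the subideal $J_d$ and invoking minimality) is in fact spelled out more carefully than the paper's corresponding one-line assertion.
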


\begin{proof} For any minimal reduction $J$ of $I$, 
consider the equality $J I^k = I^{k+1}$, where integer $k = r_J(I)$. Since $J$ is contained in $I$, and $I$ is generated by homogeneous polynomials of degree $d$, the same must be true for $J$. 

As  $K$ is infinite each minimal reduction of $I$ is generated by $s = \dim \cF (I)$ elements.  Let $J = (g_1,\ldots,g_s)$ be such a reduction. The classes of its generators form a system of parameters of $\cF (I)$ that is linear. Since $\cF (I)$ is Cohen-Macaulay  $g_1,\ldots,g_s$ is a regular sequence. Regularity is invariant under quotient by a linear regular sequence (see, e.g., \cite[Lemma 2]{N-reg}). Thus, 
\[
\reg \cF (I) = \reg \cF (I)/ J \cF (I). 
\] 
As $\cF (I)/ J \cF (I)$ is artinian its regularity is determined by its largest non-vanishing degree component (see, e.g., \cite[Lemma 2.1]{N}), that is, 
\[
\reg \cF (I)/ J \cF (I) = \max\{ k \in \ZZ \; | \; [\cF (I)/ J \cF (I)]_k \neq 0\}.
\]
Notice that 
$
 [\cF (I)/ J \cF (I)]_k \cong I^k/(J I^{k-1} + \fm I^{k})$. 
Nakayama's Lemma implies that $[\cF (I)/ J \cF (I)]_k = 0$ if and only if $I^k = J I^{k-1}$. Now the claim follows.
\end{proof}

Finally, we are ready to determine the reduction number of any Ferrers ideal. 
Together with Theorem \ref{thm:min-red-Ferrers}, this completes our derivation of Dedekind-Mertens-like formulas in this case. 

\begin{Theorem}
     \label{thm:red number Ferrers}
For each partition $\lambda$, the reduction number of the Ferrers ideal $I_{\lambda}$   is  
\[
r  (I _{\lambda}) = \min \{n-1, \la_i + i - 3 \s 2 \le i \le n\}.
\]   
\end{Theorem}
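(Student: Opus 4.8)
The plan is to turn the computation of the reduction number into a regularity computation via Proposition~\ref{prop:red number - reg}, and then to read off the regularity from the results of this section. First I would verify the three hypotheses of Proposition~\ref{prop:red number - reg} for $I = I_\lambda$: it is generated in the single degree $d=2$; the field $K$ is infinite, so that minimal reductions exist and are generated by $\dim \cF(I_\lambda)$ linear forms in the generators; and the special fiber ring $\cF(I_\lambda)$ is Cohen-Macaulay. The last point is immediate from the identification $\cF(I_\lambda) \cong K[\bT_\lambda]/I_2(\bT_\lambda)$ of \cite[Proposition 5.1]{CorsoNagel} together with Corollary~\ref{cor:CM}. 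With these in hand, Proposition~\ref{prop:red number - reg} gives at once that $r(I_\lambda) = \reg \cF(I_\lambda)$, and that this value is independent of the chosen minimal reduction; in particular it agrees with $r_{J_\lambda}(I_\lambda)$ for the distinguished reduction $J_\lambda$ of Theorem~\ref{thm:min-red-Ferrers}.

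It then remains to evaluate $\reg \cF(I_\lambda)$, and the key point is that the genuine Ferrers case is precisely the ``$\mu = 0$'' specialization of Corollary~\ref{cor:regularity}. To make this rigorous within the standing hypothesis $\mu_i \ge i-1$, I would realize $I_\lambda$ as a generalized Ferrers ideal by setting $\mu' := (n-1,\ldots,n-1) \in \ZZ^n$ and $\lambda' := (\lambda_1 + n-1,\ldots,\lambda_n + n-1)$. One checks readily that $(\lambda',\mu')$ satisfies all the standing assumptions: $\lambda'$ is again a partition (its successive differences equal those of $\lambda$), one has $0 \le \mu_1' \le \cdots \le \mu_n' < \lambda_n'$, and $\mu_i' = n-1 \ge i-1$. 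Since the leftmost column of $\bT_{\lambda'-\mu'}$ is column $n$, this tableau is just $\bT_\lambda$ shifted $n-1$ columns to the right, so $I_{\lambda'-\mu'} \cong I_\lambda$ after renaming variables and hence $\cF(I_\lambda) \cong \cF(I_{\lambda'-\mu'})$.

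Applying Corollary~\ref{cor:regularity} to $(\lambda',\mu')$ and using $\lambda_i' - \mu_i' = \lambda_i$ then yields
\[
r(I_\lambda) = \reg \cF(I_\lambda) = \reg \cF(I_{\lambda'-\mu'}) = \min\{n-1,\ \lambda_i'-\mu_i'+i-3 \s 2 \le i \le n\} = \min\{n-1,\ \lambda_i + i - 3 \s 2 \le i \le n\},
\]
which is the asserted formula. The degenerate case $n=1$ is handled separately and is trivial: then $I_2(\bT_\lambda) = 0$, so $\cF(I_\lambda)$ is a polynomial ring of regularity $0$, the indexing set is empty, and both sides equal $0$.

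The main obstacle is not any single invocation but the bookkeeping that legitimizes reading Corollary~\ref{cor:regularity} in the Ferrers case: one must confirm that the auxiliary partition $\lambda'$ remains weakly decreasing and that the column shift induces an isomorphism of the determinantal rings, so that $\cF(I_\lambda)$ genuinely coincides with the object whose regularity Corollary~\ref{cor:regularity} computes. Everything else---generation in a single degree, Cohen-Macaulayness, and the passage from regularity to reduction number---is a direct appeal to results already in place.
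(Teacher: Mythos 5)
Your proposal is correct and is essentially the paper's own argument: both proofs re-index $I_\lambda$ as a generalized Ferrers ideal satisfying the standing hypothesis $\mu_i \ge i-1$ so that the results of Sections 2--6 apply (the paper shifts columns by $n$, which also makes the specialized ideal $\overline I_{\widetilde\lambda-\mu}$ isomorphic to $I_\lambda$ and lets it quote Theorem~\ref{thm:special-fiber-ring-is-ladder-determinantal} for Cohen-Macaulayness, while you shift by $n-1$ and get Cohen-Macaulayness from Corollary~\ref{cor:CM} applied to the shifted bipartite tableau), and then both conclude by combining Proposition~\ref{prop:red number - reg} with Corollary~\ref{cor:regularity}. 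The only substantive divergence is the degenerate case $n=1$: the paper asserts $r_{J_\lambda}(I_\lambda)=1$ there, whereas your value $0$ is the one consistent with the stated formula, since $J_\lambda=I_\lambda$ gives $I_\lambda^{1}=J_\lambda I_\lambda^{0}$.
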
 

\begin{proof}
Consider $\mu = (\mu_1,\ldots,\mu_n)$, where $\mu_1 = \cdots = \mu_n = n$, and $\widetilde{\la} = (\la_1 + n,\ldots,\la_n + n)$. Then the special fiber rings of the ideals $I_{\la}, \ I_{\widetilde{\la} - \mu}$, and $\overline{I}_{\widetilde{\la} - \mu}$ are isomorphic. 

The Ferrers ideal $I_{\lambda}$ is generated in degree two, and its special fiber ring is Cohen-Macaulay (see, e.g., Theorem \ref{thm:special-fiber-ring-is-ladder-determinantal}). Hence Proposition \ref{prop:red number - reg} applies, and we conclude using Corollary \ref{cor:regularity} if $n \ge 2$. If $n =1$, then $J_{\la} = I_{\la}$, and thus $r_{J_{\lambda}} (I _{\lambda}) = 1$, completing the argument. 
\end{proof}

\begin{Example}
  \label{ex:complete bipartite}
If $\lambda = (m,m,\ldots,m) \in \ZZ^n$, then Theorem \ref{thm:red number Ferrers} gives 
$r (I _{\lambda}) =  \min\{m, n\} - 1$, and thus 
\[
J_{\lambda} \cdot I_{\lambda}^{\min\{m, n\} - 1} = I_{\lambda}^{\min\{m, n\}}. 
\]
This is the Dedekind-Mertens formula for the content of the product of two generic polynomials in \cite[Theorem 2.1]{CVV}, as discussed in the introduction. 
Theorems \ref{thm:min-red-Ferrers} and  \ref{thm:red number Ferrers} give analogous Dedekind-Mertens-like formulas with optimal exponents for an arbitrary Ferrers ideal. 
\end{Example}  
  
We now consider the specialized Ferrers ideals for which we found a distinguished minimal reduction in Theorem \ref{thm:min-red-specialized-Ferr}. 

\begin{Theorem}
   \label{thm:red number spec Ferrers}
Let $\mu = (0,1,\ldots,n-1) \in \ZZ^n$, and let $\la = (\la_1,\ldots,\la_n)$ be a partition, where $\la_1 = m$ and $\la_n \ge n$. Then the reduction number of the specialized  Ferrers ideal $\overline{I}_{\lambda - \mu}$   is 
\[
r (\overline{I}_{\lambda - \mu}) = \min \left \{n-1, \left \lfloor \frac{\la_i  + i}{2} \right \rfloor - 1  \; \big | \; 2 \le i \le n \right \}.
\]
\end{Theorem}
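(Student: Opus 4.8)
The plan is to recognize this statement as the assembly of three results already in hand: the general principle relating reduction numbers and Castelnuovo-Mumford regularity in Proposition \ref{prop:red number - reg}, the Cohen-Macaulayness of the special fiber ring from Theorem \ref{thm:special-fiber-ring-is-ladder-determinantal}, and the explicit regularity computation in Corollary \ref{cor:reg specialized fiber}. Together with the distinguished minimal reduction $\overline{J}_{\la-\mu}$ produced in Theorem \ref{thm:min-red-specialized-Ferr}, these complete the Dedekind-Mertens-like formula for strongly stable specialized Ferrers ideals.

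Concretely, I would first verify the hypotheses of Proposition \ref{prop:red number - reg}. The ideal $\overline{I}_{\la-\mu}$ is generated by the quadrics $x_i x_j$, hence in the single degree $d=2$, and $K$ is assumed infinite. By Theorem \ref{thm:special-fiber-ring-is-ladder-determinantal} the special fiber ring $\cF(\overline{I}_{\la-\mu}) \cong K[\bTlm]/I_2(\bSlm)$ is Cohen-Macaulay. Proposition \ref{prop:red number - reg} then yields at once $r(\overline{I}_{\la-\mu}) = \reg \cF(\overline{I}_{\la-\mu})$, and substituting the value of this regularity furnished by Corollary \ref{cor:reg specialized fiber}, which applies precisely because $\mu = (0,1,\ldots,n-1)$, produces the claimed formula for $n \ge 2$. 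For the degenerate case $n = 1$ I would argue directly: here $\mu = (0)$ and the standing assumption $\mu_n < \la_n$ forces the situation $\la = (m)$, so $I_2(\bSlm) = 0$, the fiber ring is a polynomial ring, $\reg \cF(\overline{I}_{\la-\mu}) = 0$, and the asserted minimum reduces to $n-1 = 0$ since the index set $\{2 \le i \le n\}$ is empty; this matches $r(\overline{I}_{\la-\mu}) = 0$.

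I expect essentially no serious obstacle, since all the genuine difficulty has been discharged earlier: the Gr\"obner basis computation of Theorem \ref{thm:Minors-Grob}, the liaison-theoretic vertex decomposability and Cohen-Macaulayness of Theorem \ref{thm:initial-ideals}, and the Hilbert-series bookkeeping behind Corollary \ref{cor:reg specialized fiber} via Corollary \ref{cor:non-vanishing}(b). The two points that deserve a line of care are, first, the internal consistency of the hypotheses—the condition $\la_n \ge n$ is automatic from $\mu_n = n-1 < \la_n$, so no separate checking is needed—and, second, that the quantity $r(\overline{I}_{\la-\mu})$ is the \emph{absolute} reduction number, a minimum over all minimal reductions, rather than merely the reduction number with respect to the distinguished reduction $\overline{J}_{\la-\mu}$ of Theorem \ref{thm:min-red-specialized-Ferr}. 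This is exactly what the proof of Proposition \ref{prop:red number - reg} delivers: its argument establishes $r_J(\overline{I}_{\la-\mu}) = \reg \cF(\overline{I}_{\la-\mu})$ for \emph{every} minimal reduction $J$, whence the minimum over all such $J$ equals the regularity as well.
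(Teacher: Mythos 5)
Your proposal is correct and follows essentially the same route as the paper's own proof: for $n \ge 2$ it combines Proposition \ref{prop:red number - reg}, the Cohen--Macaulayness from Theorem \ref{thm:special-fiber-ring-is-ladder-determinantal}, and the regularity formula of Corollary \ref{cor:reg specialized fiber}, and it handles $n=1$ by a direct argument (the paper notes that $\overline{I}_{\lambda-\mu}$ then coincides with each of its minimal reductions, which is equivalent to your polynomial-ring observation). Your two points of care—that $\la_n \ge n$ is forced by $\mu_n < \la_n$, and that Proposition \ref{prop:red number - reg} yields the absolute reduction number because its proof treats an arbitrary minimal reduction—are both accurate and match the paper's intent.
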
 

\begin{proof}
The ideal $\overline{I}_{\lambda}$ is generated in degree two, and its special fiber ring is Cohen-Macaulay by Theorem \ref{thm:special-fiber-ring-is-ladder-determinantal}. Hence Proposition \ref{prop:red number - reg} and Corollary \ref{cor:reg specialized fiber} give the assertion if $n \ge 2$. If $n =1$, then $\overline{I}_{\lambda - \mu} = x_1 (x_{\mu_1 +1}\ldots,x_m)$, which is equal to each of its minimal reductions. This completes the argument. 
\end{proof}

We illustrate some of the above results in some very special cases. 

\begin{Example}
  \label{ex:Segre-embedding}
Let $2 \le n \le m $ be integers and consider the partition $\lambda
= (m,\ldots,m) \in \ZZ^n$ and $\mu = (0,1,\ldots,n-1) \in \ZZ^n$. Then the
coefficients of the normalized numerator in the Hilbert series of the toric
ring $\cF(\overline{I}_{\lambda - \mu})$ are
\begin{eqnarray*}
	\overline{h}_1 (\lambda - \mu ) & = &  \max\{0,n-1\}+\sum_{j=2}^n(m-(j-1)-1) =  n-1+\sum_{j=2}^n(m-j), \\
& = & m (n-1) - \binom{n}{2}. 	
\end{eqnarray*}
and, for $k\geq 2$,
\begin{align*}
	\overline{h}_k (\lambda - \mu ) &= \sum_{2\leq i_1<i_2<\dots<i_k\leq n} \ \sum_{j_{k-1}=-k+2}^{m-i_k-k+2}\ \sum_{j_{k-2}=-k+3}^{j_{k-1}}\dots \sum_{j_1=0}^{j_2}j_1 \\
	&= \sum_{k<i_k\leq n} {{i_k-2}\choose {k-1}} \ \sum_{j_{k-1}=-k+2}^{m-i_k-k+2}\ \sum_{j_{k-2}=-k+3}^{j_{k-1}}\dots \sum_{j_1=0}^{j_2}j_1 \\
	&= \sum_{k<i_k\leq n} {{i_k-2}\choose {k-1}} \ \sum_{j_{k-1}=-k+2}^{m-i_k-k+2}\ \sum_{j_{k-2}=-k+3}^{j_{k-1}}\ \sum_{j_{k-3}=-k+4}^{j_{k-2}} \dots \sum_{j_2=-1}^{j_3} \binom{j_2 + 1}{2} \\
& = \cdots  \\ 
& = \sum_{k<i_k\leq n} {{i_k-2}\choose {k-1}} \binom{m - i_k + 1}{k}. 
\end{align*}
Notice that, for a fixed $k$, this is a sum  over a polynomial in $i_k$ of degree $2k-1$, which can be evaluated explicitly. For example, if $k = 2$, then 
\[
\overline{h}_2 (\lambda - \mu ) = \binom{m}{4} - m \binom{m+1 -n}{3} + 3 \binom{m+2-n}{4}. 
\]
However, a general formula does not seem to be easy, except in the case  $m = n$. 

Indeed, if $m = n$, then the above formulae simplify to give, for all $k \ge 0$, 
\[
	\overline{h}_k (\lambda -\mu) =  \binom{n}{2 k}. 
\]
In this special case there is a more direct approach. Observe that if $m = n$, then $\overline{I}_{\lambda - \mu} = (x_1,\ldots,x_n)^2$,
thus $\cF(\overline{I}_{\lambda - \mu})$ is the coordinate ring of the
second Veronese embedding of $\PP^{n-1}$ into $\PP^{\binom{n+1}{2} - 1}$. Hence the Hilbert function in non-negative degrees is 
\[
h_{K[\overline{I}_{\lambda - \mu}]} (j) = \binom{n-1+2j}{n-1}.
\]
The multiplicity is $e( \cF(\overline{I}_{\la - \mu})) = 2^{n-1}$.  Observe that $\cF(\overline{I}_{\la - \mu})$ is a Gorenstein ring if and only if $n$ is even. 
\end{Example}

Now consider the analogous squarefree specialized Ferrers ideals.

\begin{Example}
  \label{ex:hypersimplex}
Let $2 \le n < m $ be integers and consider the partition $\lambda
= (m,\ldots,m) \in \ZZ^n$ and $\mu = (1,2,\ldots,n) \in \ZZ^n$.
Then, 
\begin{eqnarray*}
	\overline{h}_1 (\lambda - \mu ) & = &  \max\{0,n-2\}+\sum_{j=2}^n(m-j-1) = n-2+\sum_{j=2}^n(m-j-1) \\
& = & m (n-1) - \binom{n+1}{2} 
\end{eqnarray*}
and, for $k\geq 2$,
\begin{align*}
	\overline{h}_k (\lambda - \mu ) &= \sum_{2\leq i_1<i_2<\dots<i_k\leq n}\ \sum_{j_{k-1}=-k+3}^{m-i_k-k+2}\ 
\sum_{j_{k-2}=-k+3}^{j_{k-1}}\ \sum_{j_{k-3}=-k+4}^{j_{k-2}} \dots \sum_{j_1=0}^{j_2} j_1\\
&= \sum_{k<i_k\leq n} {{i_k-2}\choose {k-1}} \ \sum_{j_{k-1}=-k+3}^{m-i_k-k+2}\ \sum_{j_{k-2}=-k+3}^{j_{k-1}}\ \sum_{j_{k-3}=-k+4}^{j_{k-2}} \dots \sum_{j_1=0}^{j_2} j_1 \\
&= \sum_{k<i_k\leq n} {{i_k-2}\choose {k-1}} \ \sum_{j_{k-1}=-k+3}^{m-i_k-k+2}\ \sum_{j_{k-2}=-k+3}^{j_{k-1}}\ \sum_{j_{k-3}=-k+4}^{j_{k-2}} \dots \sum_{j_2=-1}^{j_3} \binom{j_2 + 1}{2} \\
& = \cdots  \\ 
&= \sum_{k<i_k\leq n} {{i_k-2}\choose {k-1}} \ \sum_{j_{k-1}=-k+3}^{m-i_k-k+2}  \binom{j_{k-1} + k- 2}{k-1} \\
& = \sum_{k<i_k\leq n} {{i_k-2}\choose {k-1}} \binom{m - i_k + 1}{k}, 
\end{align*}
where in the formula from Theorem~\ref{thm:Hilb-series} we used that $i_k>i_j$ implies $i_k-1\geq i_j$,  for $j=2,\dots,k-1$.

Note that in the special case $m = n+1$ these formulae are again well-known.
Indeed, then $\overline{I}_{\lambda - \mu}$ is the edge ideal of a complete graph on $n+1$ vertices, and the result simplifies to
\begin{align*}
  \overline{h}_k (\lambda - \mu) & = \begin{cases}
   {\displaystyle  \binom{n+1}{2 k}} & \text{if } k \le \frac{n+1}{2} \text{ and } k \neq 1 \\[3ex]
    {\displaystyle \frac{(n + 1) (n-2)}{2}} & \text{if } k = 1 
  \end{cases},
\end{align*}
as first shown in \cite[Remark 9.2.11]{vila}. For the multiplicity, we obtain: 
\begin{eqnarray*}
e( \cF(\overline{I}_{\la - \mu})) & = &   \overline{h}_0 (\lambda - \mu) + \cdots +  \overline{h}_{n-1} (\lambda - \mu) \\
& = & \overline{h}_1 (\lambda - \mu)  - \binom{n+1}{2} + \sum_{k \ge 0} \binom{n+1}{2 k} \\[1ex]
& = &  2^{n}  - (n+1). 
\end{eqnarray*}

In the  case $m = n+1$, also the Hilbert function of $\cF(\overline{I}_{\lambda - \mu})$ admits a nice form. Indeed, in non-negative degrees it equals its Hilbert polynomial, which in turn is equal to the Ehrhart polynomial of the second hypersimplex  in $\mathbb R^{n+1}$ with $\binom{n+1}{2}$ vertices (see \cite[Corollary 9.6]{St}):
\[
h_{\cF(\overline{I}_{\lambda - \mu})} (j) = \binom{n+2j}{n} - n \binom{n+j-1}{n}
\]
for all $j \geq 0$. Note that the multiplicity of $\cF(\overline{I}_{\lambda - \mu})$, that is $2^n - (n+1)$,  is the volume of this hypersimplex. 
\end{Example}

\newpage

{
\section{Final remarks and open problems}\label{sec:Problems}}
\subsection{Shapes of  minimal free resolutions}
{
In Corollary \ref{cor:primeness} we showed in particular that the rings $K[\bTlm]/I_2 (\bTlm)$ and $K[\bTlm]/I_2 (\bSlm)$ are Koszul rings. This means that the minimal graded free resolution of the residue field over the original ring is linear, that is, it is of the form 
\[
\cdots \to A^{\gamma_2} (-2) \to A^{\gamma_1}(-1) \to A \to K \to 0, 
\]
where $A$ is $K[\bTlm]/I_2 (\bTlm)$ or $K[\bTlm]/I_2 (\bSlm)$ and $K$ is considered as the quotient of $A$ by its maximal homogeneous ideal. It is a very interesting question to study the shape of the graded minimal free resolution of $A$ over the polynomial ring $R = K[\bTlm]$. The first problem is then to describe the length of the linear part of the resolution. This question has been mostly investigated in the case when the ideal is generated by quadrics. In fact, one says that $A = R/I$ has property $(N_p)$ for some integer $p \ge 1$ if the first $p$ steps in the resolution are linear (see \cite{GL}). More precisely, the graded minimal free resolution has the form 
\[
\cdots \to F_{p+1} \to R^{\beta_p} (-p-1)\to \cdots  \to R^{\beta_1}(-2) \to R \to A \to 0. 
\]
Hence property $(N_1)$ means that the ideal $I$ is generated by quadrics. The algebra $A$ has property $(N_2)$ if in addition all first syzygies of $I$ are linear. There is a rich literature investigating property $(N_p)$ in various cases, initiated in \cite{Green}. 
}

{
Notice that the resolution of $A$ is linear if and only if its Castelnuovo-Mumford regularity is one. By Corollaries \ref{cor:regularity} and  \ref{cor:reg specialized fiber}, the regularity of $K[\bTlm]/I_2 (\bTlm)$ and $K[\bTlm]/I_2 (\bSlm)$ is greater than one in most cases. Thus, it would be very interesting to establish results on the length of the linear part of the resolution of these rings. This is a  challenging problem. It is even open in the very special case, where  $\lambda
= (m,\ldots,m) \in \ZZ^n$, \ $\mu = (0,1,\ldots,n-1) \in \ZZ^n$, and $2 \le n \le m$ as considered in Example \ref{ex:Segre-embedding}. If we further assume $m = n$, then $A = K[\bTlm]/I_2 (\bSlm)$ is the coordinate ring of the second Veronese embedding of $\PP^{n-1}$. In this latter case, the resolution is linear if and only if $n \le 3$. If $n \ge 4$ and the characteristic of $K$ is zero, then $A$ has property $(N_5)$, but not $(N_6)$ by \cite{JPW}. However, if $n \ge 6$ and the characteristic of $K$ is five, then $A$  satisfies $(N_4)$, but not $(N_5)$ by \cite{A}. 
}

\subsection{Finding explicit minimal reductions}
{
In Theorem \ref{thm:min-red-specialized-Ferr} we determined a distinguished minimal reduction of the ideal $\overline{I}_{\la - \mu}$ in an important case. It would be desirable to extend this result to further cases as this could be useful, for example,  in studying the core of Ferrers ideals or in investigations in algebraic statistics (see below). The challenge is that then the number of generators of a minimal reduction is greater than the number of diagonals in $\bTlm$. An interesting first step would be to settle the case when  the difference is one. We note that,  experimentally, the pattern for a minimal reduction as found in Example \ref{ex:min-red} seems to generalize. 
}

\subsection{Other shapes of matrices and higher minors}
{
The determinantal rings $K[\bTlm]/I_2 (\bTlm)$ and $K[\bTlm]/I_2 (\bSlm)$ arise naturally as special fiber rings of Ferrers and specialized Ferrers ideal (see \cite[Proposition 5.1]{CorsoNagel}  and Theorem \ref{thm:special-fiber-ring-is-ladder-determinantal}) or as Rees algebras (see Remark \ref{rem:Rees-algebra-simplified}). However, one may also  view these determinantal rings as obtained from a generic (symmetric) matrix, where some variables are forbidden for use in any minor. Alternatively, one could replace the ``forbidden'' variables by zeros and eliminate forbidden subregions of the matrix. It is shown in \cite{B} that the ideal generated by the  maximal minors of such a matrix always has a linear resolution and that the non-zero minors form a universal Gr\"obner basis. 
}

{
It would be interesting to investigate ideals  generated by higher minors of a tableau $\bTlm$ or $\bSlm$. Steps in this direction are taken in \cite{NRo}. 
}

\subsection{Connections with algebraic statistics}

{
We hope that our results also motivate further investigations in applications of monomial algebras. For instance,  special fiber rings of edge ideals of graphs (and hypergraphs) make a notable appearance in algebraic statistics, which considers statistical models with rational parametrizations. Specifically, the structure of the model of independence of  two  categorical random variables $X\in[n]$ and $Y\in[m]$ is such that it is parametrized by  the edges of a bipartite graph on $n$ and $m$ vertices. In symbols, $P(X=i,Y=j)=P(X=i)P(Y=j)$, where $P(A)$ denotes the probability of the event $A$. If we denote the marginal probabilities by $P(X=i)=x_i$ and $P(Y=i)=y_i$, the edge $x_iy_j$ in a bipartite graph corresponds to the probability of the joint state of the random variables $(X,Y)=(i,j)$. 
Without any restrictions on the model, the bipartite graph is complete. Missing edges in the bipartite graph correspond to the so-called \emph{structural zeros} in the statistical model, that is, joint states of random variables that are simply unobservable. In this sense, \emph{Ferrers graphs correspond to independence models where structural zeros appear  with a hierarchy}: if the joint state $(i,j)$ is not a zero, then neither are any states $(k,l)$ with $k<i$ and $l<j$. 
In the same way, the generalized Ferrers tableau with $\mu_i\geq i$ parametrizes the model of quasi-independence with structural zeros, obtained from the independence model by removing the diagonals of the tableau. 
}

{
One of the early results in the field is the Fundamental Theorem of Markov Bases \cite{DS98}, which states that a Markov basis for a  log-linear statistical model on discrete variables, of which the independence models are quintessential examples, is given by a generating set of a corresponding toric ideal. In algebraic language, a Markov basis of a model parametrized by monomials is a generating set of the special fiber ring of that monomial ideal. 
 A Markov basis is necessary for testing fit of a proposed model to the given data, and the Fundamental Theorem applies to a large class of models used in practice. However, most of the time, determining the Markov basis - theoretically or using a computer -  is a highly non-trivial task due to the size of the problems that arise in applications. In other situations, structural zeros pose a significant challenge, in that many of the elements of Markov basis are not applicable, as they attempt to place observations where structural zeros disallow them. In such cases, having an explicit description of a Gr\"obner basis can be important for more efficient computation, see for example \cite{Rapallo06} or \cite{KateriTablesBook}. 
}

{
Independence models are not the only ones encoded by graphs; there are other families of models whose building blocks have bipartite graph structure.  
 Recent results, e.g., \cite{PRF10, GPS14+},  show that for some very popular models of random graphs, Markov bases can be constructed by appropriately composing generators of the edge subring of a bipartite graph. 
The  problem of the computational difficulty of Markov bases has  been addressed  for various models recently \cite{RY10, HAT11, GPS14+,Uhler:IsingModel16}  by considering subsets of generators that can be applied to given data. 
Another direction of interest is how well Markov chains based on various types of bases behave; the interested readers should see, e.g., \cite[Section 3.1]{PetReview16} for references to the literature on mixing times. 
It is possible that using a nice Gr\"obner basis leads to positive results, though it seems more likely that Markov chains based on graph-theoretic sampling algorithms could perform better. These types of questions are subjects of ongoing research. 
}

{
More interestingly, a connection has not yet  been made between statistical concepts and reductions, reduction numbers, or other similar information. Generators of a minimal reduction are sums of monomials from the model parametrization. Since minimal reductions carry a lot of algebraic information about the original monomial ideal, and  algebraic information such as dimension captures the algebraic complexity of the model, an open question is to investigate how to use minimal reductions in statistical modeling, sampling, or inference. 
}


\end{document}